\newtheorem{thm}{Theorem}[section]
\newtheorem*{theorem*}{Theorem}
\newtheorem*{corollary*}{Corollary}
\newtheorem*{proposition*}{Proposition}
\newtheorem{defin}[thm]{Definition}
\newtheorem{lemme}[thm]{Lemma}
\newtheorem{prop}[thm]{Proposition}
\newtheorem{asum}{Assumption}
\newtheorem{cor}[thm]{Corollary}
\newtheorem{remark}{Remark}
\DeclareMathOperator{\diam}{diam}
\DeclareMathOperator{\dist}{dist}
\DeclareMathOperator{\lip}{Lip}
\DeclareMathOperator{\dime}{dim}
\DeclareMathOperator{\hes}{Hess}
\DeclareMathOperator{\dvol}{dvol}
\DeclareMathOperator{\vol}{vol}
\DeclareMathOperator{\h}{H}
\DeclareMathOperator{\R}{R}
\DeclareMathOperator{\tr}{trace}
\DeclareMathOperator{\sect}{sec}
\DeclareMathOperator{\inj}{injrad}
\DeclareMathOperator{\aut}{Aut}
\DeclareMathOperator{\Id}{Id}
\DeclareMathOperator{\e}{e}
\DeclareMathOperator{\spt}{spt}
\DeclareMathOperator{\sing}{sing}
\begin{document}
\title[ Harmonic Maps]{Convergence of Harmonic Maps}

\author{Zahra Sinaei} 

\begin{abstract}
In this paper we prove a compactness theorem for sequences of harmonic maps which are defined on converging sequences of Riemannian manifolds.
\end{abstract}
\maketitle
Harmonic maps  are critical points of the energy functional defined on the space of maps between Riemannian manifolds. This theory was developed by J. Eells and H. Sampson \cite{E64} in the 1960s. The notion of harmonic maps on smooth metric measure spaces was introduced by Lichnerowicz in \cite{L69}.  Harmonic maps betweens singular spaces have been studied since the early 1990s in the works of Gromov-Schoen in \cite{GS92} and Korevaar-Schoen in \cite{KS93}.  Eells and Fuglede describe the application of the methods of \cite{KS93} to the study of maps between polyhedra \cite{F01}.

A smooth metric measure space is a triple $(M,g,\Phi \dvol_M)$, where $(M,g)$ is an $n$-dimensional Riemannian manifold, $\dvol_M$ denotes the corresponding  Riemannian volume element on $M$, and $\Phi$ is a smooth positive function on $M$. These spaces have been used extensively in geometric analysis and they arise as smooth collapsed measured Gromov-Hausdorff limits in the works of Cheeger-Colding \cite{CC97, C00, CC00}, Fukaya  \cite{Fuk87} and Gromov \cite{Gr81}. They have been studied recently by Morgan \cite{M05}. See also works of Lott \cite{L03}, Qian \cite{Q97}, Fang-Li-Zhang \cite{F09}, Wei-Wylie \cite{W09}, Wu  \cite{W10}, Su-Zhang \cite{Z11} and  Munteanu-Wang \cite{M11}.

In this paper, we are going to study the behavior of harmonic maps under convergence.  Let ${\cal{M}}(n,D)$ denote the set of all compact Riemannian manifolds  $(M,g)$ such that $\dime(M)=n$, $\diam(M)<D$, and the sectional curvature $\sect_g$ satisfies $|\sect_g|\leq 1$, equipped with the measured Gromov-Hausdorff topology. Let $(M_i,g_i,\dvol_{M_i})$ in $\mathcal{M}(n,D)$ be a sequence of manifolds which converges to a smooth metric measure space $(M,g,\Phi\dvol_M)$. Suppose $f_i:(M_i,g_i)\rightarrow (N,h)$ is a sequence of harmonic maps. We are interested in knowing under what circumstances the $f_i$ converge to a harmonic map $f$ on the smooth metric measure space $(M,g,\Phi\dvol_M)$.

 When a sequence of manifolds $(M_i,g_i)$ in $\mathcal{M}(n,D)$ converges to a metric space $X$, according to Fukaya \cite{F88}, $X$ is a quotient space $Y\slash O(n)$, where $Y$ is a smooth manifold. Indeed $Y$ is the limit point of the sequence of frame bundles, $F(M_i)$, over the manifolds $M_i$ and $X$ has the structure of a Riemannian polyhedron $(X,g_X,\Phi_X\mu_g)$  where $\mu_g$ is the Riemannian volume element related to the metric $g_X$ on $X$.

We state the main result of this paper which is a compactness theorem for sequences of harmonic maps.
\begin{thm}\label{main}. Let $(M_i,g_i)$ be a sequence of smooth Riemannian manifolds in ${\mathcal{M}}(n,D)$ which converges to a metric measure space $(X,g,\Phi  \mu_g)$ in the measured Gromov-Hausdorff Topology. Suppose $(N,h)$ is a compact Riemannian manifold. Let $f_i:(M_i,g_i)\rightarrow (N,h)$ be a sequence of harmonic maps such that $\|e_{g_i}(f_i)\|_{L^{\infty}}<C$, where $\|e_{g_i}(f_i)\|_{L^{\infty}}$ is the  $L^{\infty}$-norm of the energy density of the map $f_i$ and $C$ is a constant independent of $i$. Then $f_i$ has a subsequence  which converges to a map  $f:(X,g,\Phi\mu_g)\rightarrow (N,h)$, and this map is a harmonic map in ${\mathcal{H}}^1((X,\Phi \mu_g),N)$.
\end{thm}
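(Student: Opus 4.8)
The plan is to desingularize the collapse by passing to the frame bundles, where the geometry stays bounded and non-collapsing, and to run the classical regularity theory for harmonic maps there. By the discussion preceding the theorem (Fukaya \cite{F88}), the frame bundles $F(M_i)$, equipped with the natural metrics built from the Levi-Civita connection together with a bi-invariant metric on $O(n)$, have uniformly bounded curvature and injectivity radius bounded below, and they converge in the $O(n)$-equivariant $C^{1,\alpha}$ topology to a smooth manifold $Y$ with $X=Y/O(n)$. First I would lift each $f_i$ to $\tilde f_i:=f_i\circ\pi_i:F(M_i)\to N$, where $\pi_i:F(M_i)\to M_i$ is the bundle projection. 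Since $\pi_i$ is a Riemannian submersion with totally geodesic (hence minimal) fibres, the composition formula for tension fields reduces to $\tau(\tilde f_i)=\tau(f_i)\circ\pi_i=0$, so each $\tilde f_i$ is harmonic; it is $O(n)$-invariant by construction, and because $d\tilde f_i$ annihilates vertical vectors its energy density satisfies $e(\tilde f_i)=e_{g_i}(f_i)\circ\pi_i\le C$.

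Next I would establish uniform regularity for the $\tilde f_i$. The $L^\infty$ bound on the energy density forces the total energy on a geodesic ball of radius $r$ to be $O(r^{\dime F(M_i)})$, hence below the threshold of the $\varepsilon$-regularity theorem of Schoen--Uhlenbeck once $r$ is small; the point is that the admissible $r$ and the resulting constants depend only on the uniform geometry bounds of $F(M_i)$ guaranteed by Fukaya's theorem. Combined with the Eells--Sampson Bochner identity
\[
\tfrac12\Delta e(\tilde f_i)=|\nabla d\tilde f_i|^2+\langle \ric_{F(M_i)}\,d\tilde f_i,d\tilde f_i\rangle-\langle \R^N(d\tilde f_i,d\tilde f_i)d\tilde f_i,d\tilde f_i\rangle
\]
(using compactness of $N$ to bound $\R^N$) and elliptic bootstrapping, this yields uniform $C^{1,\alpha}$, indeed interior $C^{k,\alpha}$, bounds on $\tilde f_i$ independent of $i$. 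Using the diffeomorphisms realizing $F(M_i)\to Y$ in $C^{1,\alpha}$, Arzel\`a--Ascoli then extracts a subsequence with $\tilde f_i\to\tilde f$ in $C^1_{\loc}$ for some $\tilde f:Y\to N$. Passing the weak harmonic map equation to the limit---legitimate because the metrics converge in $C^{1,\alpha}$ and $\tilde f_i\to\tilde f$ strongly in $W^{1,2}_{\loc}$---shows $\tilde f$ is weakly, hence smoothly, harmonic on $Y$, and it inherits $O(n)$-invariance from the $\tilde f_i$ by the equivariance of the convergence.

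Because $\tilde f$ is $O(n)$-invariant it factors through the quotient as $\tilde f=f\circ p$ with $p:Y\to X=Y/O(n)$, defining $f:X\to N$, and descending the $C^1_{\loc}$ convergence shows $f_i\to f$ compatibly with $M_i\to X$. It remains to identify $f$ as a harmonic map of the smooth metric measure space $(X,g,\Phi\mu_g)$. The weight $\Phi$ is, up to normalization, the function recording the volume of the $O(n)$-orbits, so for an $O(n)$-invariant map the energy on $Y$ equals the $\Phi$-weighted energy $\int_X\Phi\,|df|^2\,d\mu_g$ by the coarea formula. Hence vanishing of the tension field of the invariant map $\tilde f$ on $Y$ is equivalent to vanishing of the weighted tension field $\dive(\Phi\,df)$ on the regular part of $X$; together with finiteness of the weighted energy this places $f$ in $\mathcal H^1((X,\Phi\mu_g),N)$ and makes it harmonic there in the sense of Eells--Fuglede \cite{F01}.

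I expect the principal difficulty to lie in the uniform regularity step: the domains $F(M_i)$ vary and the background metrics converge only in $C^{1,\alpha}$, so one must ensure that the $\varepsilon$-regularity threshold, the Bochner estimate, and the resulting H\"older bounds are genuinely uniform along the sequence rather than degenerating with the collapse---this is exactly what the non-collapsing of the frame bundles is there to provide. A secondary technical point is the clean identification of $\Phi$ with the orbit-volume density and the reduction of the invariant harmonic map equation on $Y$ to the weighted equation on the polyhedron $X$, where the singular set of the $O(n)$-action must be handled carefully.
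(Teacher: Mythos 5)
Your overall skeleton --- lift to the frame bundles, show $\tilde f_i=f_i\circ\pi_i$ is an $O(n)$-invariant harmonic map with bounded energy density, pass to a limit $\tilde f$ on $Y$, then descend to $X=Y/O(n)$ via the correspondence between $O(n)$-invariant maps on $Y$ and maps on $X$ --- is exactly the skeleton of the paper's proof of Theorem \ref{main}, and your first and last steps match it. But your middle step, which is where all the analytic work lies, rests on a false premise. You assert that the frame bundles $F(M_i)$ have injectivity radius bounded below and converge to $Y$ in an $O(n)$-equivariant $C^{1,\alpha}$ topology realized by diffeomorphisms. That is not what Fukaya's theorem provides, and it is false whenever the $M_i$ collapse: the frame bundles then collapse as well (for flat tori $T^n_\epsilon\rightarrow \mathrm{pt}$ one has $F(T^n_\epsilon)\cong T^n_\epsilon\times O(n)\rightarrow O(n)$, and in general $\dime F(M_i)>\dime Y$). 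Theorem \ref{singular} only guarantees that the \emph{limit} $Y$ is a smooth manifold; the convergence $F(M_i)\rightarrow Y$ is realized by $O(n)$-invariant fibrations with positive-dimensional infranil fibres, not by diffeomorphisms. Consequently your uniform regularity step ($\varepsilon$-regularity plus Bochner plus Arzel\`a--Ascoli transported through diffeomorphisms) cannot produce a limit map on $Y$: there is no uniform lower injectivity radius bound to feed Schoen--Uhlenbeck, and no diffeomorphisms through which to compare $\tilde f_i$ with a fixed map on $Y$.

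What fills this gap in the paper is precisely the collapsing-to-a-manifold case (Propositions \ref{CaseII} and \ref{jadid}): one obtains uniform $C^{2,\alpha}$ control by pulling back via the exponential map to balls of conjugate-radius size (the conjugate radius, unlike the injectivity radius, is bounded below by $|\sect|\leq 1$), proves via Lemma \ref{main1} that the maps become almost constant along the collapsing fibres, builds comparison maps on the limit using local sections $s_{i,j}$ of the fibration, and passes the weak harmonic map equation to the limit against the densities $\Phi_i$ given by normalized fibre volumes. A second symptom of the same error is your treatment of the weight: if the frame-bundle convergence were non-collapsing, the limit measure on $Y$ would be the normalized Riemannian volume, whereas in fact a nontrivial density $\Phi_Y$ (the limit of normalized fibre volumes) appears on $Y$ \emph{because} of the collapse; the weight on $X$ is then $\Phi_X(x)=\int_{\pi^{-1}(x)}\Phi_Y~\dvol$, the orbit integral of $\Phi_Y$, not merely the orbit-volume function that your coarea argument produces. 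So the lifting and descent steps are fine, but the convergence theory in between --- the actual content of the theorem --- is missing and cannot be repaired along the lines you propose.
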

 By ${\mathcal{H}}^1(X,N)$ we mean
$$\{f\in{\mathcal{H}}^1(X,\mathbb R^q)~\big{|}~ f(x)\in N~ \text{for almost all}~ x\in M \},$$
where ${\mathcal{H}}^1(X,\mathbb R^q)$ is the standard Sobolev space and $N$ is isometrically embedded in $\mathbb R^q$. In this work we use the notations  ${\mathcal{H}}^1$ and $W^{1,2}$ interchangeably. For the notion of convergence of maps we refer the reader to the Definition \ref{convergence-maps}.

The rest of this paper is organized as follows. In the first section we introduce our main notations and preliminary results needed for the rest of this paper. In the second section, we prove Theorem \ref{main}. We divide the proof into three cases. In Subsection \ref{casei} we consider the non-collapsing case, Proposition \ref{CaseI}. Moreover using the regularity results for harmonic maps in the work of Schoen and Lin \cite{S83,L99} we  study Theorem \ref{main} under less restrictive assumption of uniform boundedness of the energy  of the maps  $f_i$ (see Propositions \ref{bndenrg}, \ref{bndenrg1}).  In subsection \ref{caseii} we consider the  case of collapsing to a Riemannian manifold, Proposition \ref{CaseII}. As a preliminary step we prove the result under  some regularity assumption on the metrics $g_i$,  see Proposition \ref{jadid}. The general case is considered in subsection \ref{caseiii}.  The  Appendix   is devoted to the study of convergence of the tension fields of the maps $f_i$ under the assumptions of Proposition \ref{jadid}.
\section*{Acknowledgement}
This work is part of my Ph.D. dissertation. I thank my advisor Professor
Marc Troyanov for his guidance and support in the completion of this work.
I also thank Professors Buser, Naber, and Wenger for their reading of this document
and  their comments and suggestions.
\section{Background}
\subsection{Harmonic maps}
 In this subsection, we first recall the definition of  weakly harmonic maps on  smooth metric measure spaces. We then briefly review this concept on Riemannian polyhedra. At the end we present some theorems and lemmas that we need in this paper.
 Let $(N,h)$ be a compact Riemannian manifold and $I$  an isometric embedding $I:N\rightarrow\mathbb R^q$. Since $I(N)$ is a smooth, compact submanifold of $\mathbb R^q$, there exists a number $\kappa>0$ such that the neighborhood
\begin{align*}
U_{\kappa}(N) = \{y\in \mathbb R^q: \dist(y,N)<\kappa\}
\end{align*}
has the following property: for every $y$ in $U_{\kappa}(N)$ there exists a unique point
$\pi_N(y)\in N$ such that
\begin{align*}
|y-\pi_N(y)|=\dist(y,N)
\end{align*}
 The map $\pi_N: U_{\kappa}(N)\rightarrow N$ defined as above is called the {\it{nearest point projection}} onto $N$.

The $\hes{\pi_N}$ defines an element in $\Gamma(TN^*\otimes TN^*\otimes TN^{\bot})$ which coincides with the second fundamental form of $I:N\rightarrow \mathbb R^q$ up to a negative sign
\begin{align*}
\langle\hes{\pi_N}(y)(X,Y),\eta\rangle=-\langle \nabla_Y\eta,X\rangle
\end{align*}
where $X$ and $Y$ are in $TN$, $y$ in $N$ and $\eta$ in  $TN^{\bot}$ (see \S3 in Moser \cite{MO05}).

 A map $f:(M,g,\Phi\dvol_M)\rightarrow(N,h)$, belonging to $\cal{H}^1_{loc}((M,\Phi \dvol_M),N)$ is called {\em{weakly harmonic}} if and only if
\begin{align}\label{EL}
\Delta I\circ f-\Pi(f)(df,df)+d I\circ f(\nabla \ln(\Phi))=0
\end{align}
in the weak sense. Here
\begin{align}\label{scndfnd}
\Pi(f)(df,df)= \tr~\hes(\pi_N)(I\circ f)(dI\circ f,dI\circ f),
\end{align}
or in coordinates
\begin{align*}
\Pi(f)(df,df)=\sum g^{ij}\frac{\partial^2\pi_N^{A}}{\partial z^B\partial z^C}\frac {\partial {f}^B  }{\partial x^i}\frac {\partial {f}^C  }{\partial x^j}.
\end{align*}
For $f:(M^n,g)\rightarrow (N^m,h)$ and $\eta:M\rightarrow \mathbb R^q$, we define
 \begin{align}\label{khi}
 \Xi_g(f,\eta)=\langle dI\circ f, d\eta\rangle -\langle\Pi(f)(df,df),\eta\rangle.
 \end{align}
We explain now what we mean by harmonic maps on Riemannian polyhedra. Following Eells-Fuglede \cite{F01} on an admissible Riemannian polyhedron $X$, a continuous weakly harmonic map $u:(X,g,\mu_g)\rightarrow (N,h)$ is  of class $\cal{H}^1_{loc}(X,N)$ and satisfies:  for any chart $\eta: V \rightarrow \mathbb R^n$ on $N$ and any open set $U \subset u^{-1}(V)$ of
compact closure in $X$, the equality 
\begin{equation}\label{EL2}
\int_Ug(\nabla\lambda,\nabla u^k) ~d\mu_g=\int_U\lambda (\Gamma_{\alpha\beta}^k\circ u)g(\nabla u^{\alpha},\nabla u^{\beta})~d\mu_g
\end{equation}
holds for every $k = 1,\ldots, n$ and every bounded function $\lambda\in \cal{H}^1_{0}(U)$. Here $\Gamma_{\alpha\beta}^k$  denote the Christoffel symbols on $N$.
 Similarly on a polyhedron $X$ with a measure $\Phi \mu_g$,  a continuous weakly harmonic map is a map in $ \cal{H}^1_{loc}((X,\Phi\mu_g),N)$ which satisfies  equation (\ref{EL2}) with $\Phi d\mu_g$ in place of $d\mu_g$. When the target is compact a continuous map $f$ on an admissible Riemannian polyhedron is harmonic if and only if it satisfies  (\ref{EL}) weakly.

\begin{thm}[Moser \cite{MO05}, Theorem 3.1]\label{weak}
Let $f\in {\cal{H}}^1(U,N)\cap C^0(U,N)$ be a weakly harmonic map, where $U$ is an open domain in $\mathbb R^n$. Then $f$ is smooth.
\end{thm}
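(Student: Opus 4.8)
The plan is to treat this as an interior elliptic regularity (bootstrap) problem, in which the only genuine subtlety is the \emph{critical} nature of the nonlinearity. Since $U\subset\mathbb{R}^n$ and the weight is trivial, the weakly harmonic map equation \eqref{EL2} reads, in local coordinates on $N$,
\begin{equation*}
\Delta u^k = -(\Gamma^k_{\alpha\beta}\circ u)\,\langle\nabla u^\alpha,\nabla u^\beta\rangle
\end{equation*}
for each coordinate index $k$, so that $u\equiv\eta\circ f$ solves a semilinear system whose right-hand side is quadratic in $\nabla u$ with smooth coefficients. Regularity being a local statement, I first exploit the continuity of $f$: around any $x_0\in U$ there is a ball $B=B_r(x_0)$ so small that $f(B)$ lies in a single coordinate chart $(V,\eta)$ of $N$ and, in addition, $\operatorname{osc}_B f$ is as small as I please. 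It therefore suffices to prove $u\in C^\infty(B,\mathbb{R}^{\dim N})$.

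The key step is to upgrade continuity to H\"older continuity, and this is where the smallness of the oscillation is decisive. Writing $\bar u$ for the mean of $u$ over $B$, I test the weak form with $\zeta^2(u-\bar u)$, $\zeta$ a standard cutoff. The quadratic term contributes at most $\operatorname{osc}_B u\cdot C\int_B\zeta^2|\nabla u|^2$; by continuity $\operatorname{osc}_B u$ is small, so on a sufficiently small ball this contribution can be absorbed into the left-hand side. What survives is a Caccioppoli (reverse Poincar\'e) inequality $\int_{B_{\rho/2}}|\nabla u|^2\le C\rho^{-2}\int_{B_\rho}|u-\bar u|^2$. Combining it with the Poincar\'e inequality and Widman's hole-filling trick yields an energy decay estimate $\int_{B_\rho}|\nabla u|^2\le C\rho^{\,n-2+2\gamma}$ for some $\gamma\in(0,1)$, whence Morrey's Dirichlet growth theorem gives $u\in C^{0,\gamma}$ on a smaller ball. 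I expect this to be the main obstacle: the nonlinearity $|\nabla u|^2$ lies only in $L^1$, the borderline case in which naive elliptic theory fails, and it is precisely the continuity hypothesis, through the smallness of the oscillation, that permits the nonlinear term to be absorbed and the decay to close.

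With H\"older continuity and energy decay in hand, the remainder is a routine bootstrap. The decay means $|\nabla u|^2$ belongs to a Morrey space $L^{1,\lambda}$ with $\lambda=n-2+2\gamma>n-2$; applying the Calder\'on--Zygmund and Morrey estimates to $\Delta u=g$, $g=-(\Gamma\circ u)\langle\nabla u,\nabla u\rangle$, improves the integrability of $\nabla u$, and iterating (gaining integrability at each stage via $W^{2,p}$ estimates and Sobolev embedding) gives $\nabla u\in L^p_{\mathrm{loc}}$ for every $p<\infty$, hence $u\in C^{1,\beta}_{\mathrm{loc}}$. Then $g\in C^{0,\beta}_{\mathrm{loc}}$, so the interior Schauder estimate for the Poisson equation gives $u\in C^{2,\beta}_{\mathrm{loc}}$; differentiating the system and reapplying Schauder raises the order by one at each step, yielding $u\in C^{k,\beta}_{\mathrm{loc}}$ for every $k$, that is $f\in C^\infty(B,N)$. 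As $x_0$ was arbitrary, $f\in C^\infty(U,N)$.
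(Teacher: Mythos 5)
Your outline (localize by continuity, upgrade to H\"older via energy decay, then bootstrap) is the standard one for this theorem, which the paper itself does not prove but quotes from Moser's book; and your localization, the coordinate form of the equation, the Caccioppoli inequality with the quadratic term absorbed by the smallness of $\operatorname{osc}_B u$, and the final Schauder bootstrap are all correct. The problem is the one step you yourself flag as the main obstacle: the passage from the Caccioppoli inequality to the decay $\int_{B_\rho}|\nabla u|^2\le C\rho^{\,n-2+2\gamma}$. Widman's hole-filling gives $\int_{B_{\rho/2}}|\nabla u|^2\le\theta\int_{B_\rho}|\nabla u|^2$ with $\theta=C/(C+1)<1$, where $C$ is the product of the Caccioppoli constant (an absolute constant coming from the cutoff cross term, which does \emph{not} improve as the oscillation shrinks) and the Poincar\'e constant of the annulus. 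Iterating yields decay $\rho^{\alpha}$ with $\alpha=\log_2(1/\theta)$, a fixed small number that is not at your disposal. Morrey's Dirichlet growth theorem needs an exponent exceeding $n-2$: for $n=2$ this is automatic and your proof closes, but for $n\ge 3$ it fails. Likewise, inserting $|u-\bar u|\le\operatorname{osc}_{B_\rho}u$ directly into Caccioppoli only gives $\int_{B_{\rho/2}}|\nabla u|^2=o(\rho^{n-2})$, with no quantitative positive power, since $u$ is merely continuous.

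The missing idea is harmonic replacement, which is the actual mechanism in the standard (and in Moser's) proof. Let $h$ be the componentwise harmonic function on $B_\rho$ with $h=u$ on $\partial B_\rho$; then $u-h\in{\cal{H}}^1_0\cap L^{\infty}$ is an admissible test function, $\|u-h\|_{L^\infty}\le 2\operatorname{osc}_{B_\rho}u$ by the maximum principle, and testing the system with $u-h$ gives
\begin{equation*}
\int_{B_\rho}|\nabla(u-h)|^2\le C\,\epsilon\int_{B_\rho}|\nabla u|^2,\qquad \epsilon=\operatorname{osc}_{B_\rho}u .
\end{equation*}
Combining this with the decay of harmonic functions, $\int_{B_{\sigma\rho}}|\nabla h|^2\le C\sigma^n\int_{B_\rho}|\nabla h|^2$, and the Dirichlet principle $\int_{B_\rho}|\nabla h|^2\le\int_{B_\rho}|\nabla u|^2$, one obtains $\phi(\sigma\rho)\le C(\sigma^n+\epsilon)\,\phi(\rho)$ for $\phi(\rho)=\int_{B_\rho}|\nabla u|^2$. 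The crucial point is that the good exponent $n$ comes from the comparison function, and the standard algebraic iteration lemma then gives $\phi(\rho)\le C\rho^{\lambda}$ for \emph{any} $\lambda<n$ once $\epsilon$ is small enough — in particular $\lambda=n-2+2\gamma$. With this substitution, your Morrey step and the subsequent bootstrap go through unchanged.
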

The energy functional is lower semi continuous, and we have
\begin{lemme}[Xin \cite{X96}]\label{xin}
Let $S\subset {\cal{H}}^1(M,N)$ be such that the energy functional  is bounded  on $S$ and $S$ is closed under weak limits. Then $S$ is sequentially compact.
\end{lemme}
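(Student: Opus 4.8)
The plan is to use the direct method of the calculus of variations: take an arbitrary sequence in $S$, extract a subsequence, and show its limit again lies in $S$. The only genuinely delicate point will be that the target constraint $f(x)\in N$ must survive passage to the limit, and everything else is a routine application of reflexivity together with the Rellich--Kondrachov embedding.

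First I would upgrade the energy bound to a uniform Sobolev bound. Since $N$ is isometrically embedded as a compact submanifold of $\mathbb R^q$, every element of $S$ is bounded in $L^\infty(M,\mathbb R^q)$, hence in $L^2$; combined with the hypothesis $\sup_{f\in S}E(f)<\infty$, which controls the gradient part of the norm, this yields a uniform bound on $\|\,\cdot\,\|_{\mathcal H^1(M,\mathbb R^q)}$ over $S$. Given any sequence $\{f_k\}\subset S$, reflexivity of the Hilbert space $\mathcal H^1(M,\mathbb R^q)$ then lets me extract a subsequence $f_{k_j}$ converging weakly in $\mathcal H^1$ to some $f\in\mathcal H^1(M,\mathbb R^q)$.

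Next I would invoke the Rellich--Kondrachov compact embedding $\mathcal H^1(M,\mathbb R^q)\hookrightarrow L^2(M,\mathbb R^q)$, valid on the compact manifold $M$, to upgrade weak $\mathcal H^1$ convergence to strong $L^2$ convergence along the subsequence, and then pass to a further subsequence converging almost everywhere. Because $N$ is closed in $\mathbb R^q$ and each $f_{k_j}(x)\in N$, the a.e. limit satisfies $f(x)\in N$ for almost every $x$, so $f\in\mathcal H^1(M,N)$. Finally, since $f_{k_j}\rightharpoonup f$ weakly in $\mathcal H^1$ and $S$ is by hypothesis closed under weak limits, I conclude $f\in S$. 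Thus every sequence in $S$ has a subsequence converging to a point of $S$, which is the claimed sequential compactness.

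The hard part is the nonlinearity of the target: weak $\mathcal H^1$ convergence by itself does not detect the pointwise constraint $f(x)\in N$, so the argument must be routed through the strong $L^2$ (hence a.e.) convergence furnished by Rellich--Kondrachov in order to keep the limit inside $\mathcal H^1(M,N)$. The lower semicontinuity of the energy recorded just before the lemma fits this scheme --- it ensures $E(f)\le\liminf_j E(f_{k_j})<\infty$ --- but it is the closedness of $S$ under weak limits, rather than lower semicontinuity, that actually places the limit $f$ in $S$.
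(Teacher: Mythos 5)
Your proof is correct. Note that the paper itself gives no proof of this lemma --- it is quoted verbatim from Xin's book \cite{X96} --- so there is no internal argument to compare against; your proof (uniform $\mathcal H^1(M,\mathbb R^q)$ bound from compactness of $N$ plus the energy bound, weak compactness by reflexivity, Rellich--Kondrachov to get strong $L^2$ and a.e. convergence so that the constraint $f(x)\in N$ survives, and finally closedness of $S$ under weak limits) is exactly the standard argument behind the cited result, with the one genuinely delicate point --- preserving the nonlinear target constraint --- handled correctly.
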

Now we recall some regularity results for harmonic maps from \cite{S83} and \cite{L99}. Let $M$ and $N$ be compact Riemannian  manifolds. Define
\begin{align*}
\mathcal{F}_{\Lambda}=\{u\in C^{\infty}(M,N): ~u ~\text{is harmonic and}~ E(u)\leq \Lambda\}.
\end{align*}
We have the following results.
\begin{thm}[Schoen \cite{S83}]\label{reg-sch}
Let $M$ and $N$ be compact Riemannian manifolds. Any map $u$ in the weak closure of $\mathcal{F}_{\Lambda}$ is smooth and harmonic outside a relatively closed singular set of locally finite Hausdorff $(n-2)$-dimensional measure.
\end{thm}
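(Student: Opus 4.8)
The plan is to run the Schoen--Uhlenbeck regularity scheme along the approximating sequence, combining a uniform small-energy regularity estimate, the monotonicity formula, and a density-comparison argument for the size of the concentration set. Write $u = \lim_i u_i$ weakly in $H^1$, with $u_i \in \mathcal{F}_{\Lambda}$ smooth and harmonic and $E(u_i) \le \Lambda$. After passing to a subsequence, the energy densities $\mu_i = |\nabla u_i|^2 \, dV$ converge weakly-$*$ to a finite Radon measure $\mu$ on $M$ with $\mu(M) \le 2\Lambda$. Two facts drive everything. First, each $u_i$, being a smooth harmonic map, is stationary and hence satisfies the monotonicity formula: $e^{Cr} r^{2-n}\int_{B_r(x)}|\nabla u_i|^2 \, dV$ is non-decreasing in $r$, with $C$ depending only on the geometry of $M$; this inequality passes to the limit measure $\mu$, so $\Theta_\mu(x) = \lim_{r \to 0} r^{2-n}\mu(B_r(x))$ exists and is upper semicontinuous in $x$. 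Second, the weak harmonic map equation \eqref{EL} passes to a weak limit wherever the convergence is strong in $H^1_{loc}$, the compactness of $N$ allowing passage to the limit in the quadratic second-fundamental-form term $\Pi(u)(du,du)$.

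The analytic heart is the $\epsilon$-regularity theorem, which I would use in its form uniform along the sequence: there exist $\epsilon_0 = \epsilon_0(n,N) > 0$ and $C$ such that if a smooth harmonic map $v$ satisfies $r^{2-n}\int_{B_r(x)}|\nabla v|^2 \, dV < \epsilon_0$, then $\sup_{B_{r/2}(x)}|\nabla v|^2 \le C r^{-n}\int_{B_r(x)}|\nabla v|^2 \, dV$. Its proof is the standard combination of a reverse-Poincar\'e (Caccioppoli) inequality with a Morrey-type decay iteration forcing H\"older continuity, after which Theorem \ref{weak} upgrades continuity to full smoothness; I would take this estimate from \cite{S83}.

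Next I would isolate the singular set as the concentration set
\[
\Sigma = \{\, x \in M : \Theta_\mu(x) \ge \epsilon_0 \,\}.
\]
If $x \notin \Sigma$, then at some scale $r$ the quantity $r^{2-n}\mu(B_r(x))$, and therefore $r^{2-n}\mu_i(B_r(x))$ for $i$ large, falls below $\epsilon_0$; the $\epsilon$-regularity estimate then gives uniform gradient bounds for the $u_i$ near $x$, elliptic bootstrapping promotes these to uniform $C^\infty$ bounds, and Arzel\`a--Ascoli yields $C^\infty_{loc}$ convergence $u_i \to u$ on $M \setminus \Sigma$. In particular $u$ is a smooth harmonic map there. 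Upper semicontinuity of $\Theta_\mu$ makes $\Sigma$ relatively closed, so $\Sigma$ is exactly the singular set of $u$.

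It remains to bound the size of $\Sigma$. Monotonicity gives, for each $x \in \Sigma$, the lower bound $\mu(B_r(x)) \ge c\,\epsilon_0\, r^{n-2}$ for all small $r$, so the upper $(n-2)$-dimensional density of $\mu$ is bounded below by $c\,\epsilon_0$ on $\Sigma$. The standard density-comparison theorem for Radon measures then yields $\mathcal{H}^{n-2}(\Sigma \cap K) \le C\,\epsilon_0^{-1}\,\mu(K) < \infty$ for every compact $K$, which is precisely the assertion that $\Sigma$ has locally finite $(n-2)$-dimensional Hausdorff measure; in particular $\Sigma$ has Hausdorff dimension at most $n-2$. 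I expect the $\epsilon$-regularity estimate to be the principal obstacle: the structure of $\Sigma$, the strong convergence off $\Sigma$, and the final dimension bound all follow cleanly from it together with the monotonicity formula, whereas the small-energy estimate itself requires the full reverse-Poincar\'e-plus-iteration machinery and is where the geometry of the target $N$ genuinely enters.
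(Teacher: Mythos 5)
The paper does not actually prove this statement---it quotes it as a known theorem from Schoen \cite{S83}---and your argument is a correct reconstruction of precisely the proof in that source: the Schoen--Uhlenbeck $\epsilon$-regularity estimate combined with the monotonicity formula to get locally smooth convergence off the concentration set $\Sigma$, upper semicontinuity of density to make $\Sigma$ relatively closed, and the standard density-comparison argument giving $\mathcal{H}^{n-2}(\Sigma\cap K)\leq C\epsilon_0^{-1}\mu(K)$; this is also exactly the machinery the paper recapitulates in Remark \ref{lin}. The only (harmless) imprecision is your claim that $\Sigma$ is \emph{exactly} the singular set of $u$: as Remark \ref{lin} records, $\Sigma=\spt\nu\cup\sing u$ may strictly contain $\sing u$, since energy can concentrate at points where $u$ itself extends smoothly, but the theorem only asks for smoothness outside \emph{some} relatively closed set of locally finite $(n-2)$-dimensional measure, so your proof stands as written.
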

\begin{remark}[Schoen \cite{S83}, Lin \cite{L99}]\label{lin}\normalfont
Let $u_i$ be a sequence in $\mathcal{F}_{\Lambda}$. Then there exists a subsequence which converges weakly to some $u$ in $\mathcal{H}^1(M,N)$. Define
\begin{align*}
\Sigma=\bigcap_{r>0}\left\{x\in M, ~\liminf_{i\rightarrow\infty}r^{2-n}\int_{B_r(x)}e(u_i)\geq \epsilon_0\right\}
\end{align*}
where $\epsilon_0=\epsilon_0(n,N)>0$ is a constant independent of $u_i$ as in Theorem 2.2 in \cite{S83}. If we consider a sequence of Radon measures $\mu_i=|du_i|^2dx$, without loss of generality we may assume $\mu_i\rightharpoonup\mu$ weakly as Radon measures. By Fatou's lemma, we may write
\begin{align*}
\mu=|d u|^2dx+\nu
\end{align*}
 for some non-negative Radon measure $\nu$. We can show that $\Sigma=\spt\nu\cup\sing u$ and $\nu$ is absolutely continuous with respect to $H^{n-2}|_{\Sigma}$. Therefore $u_i$  converges strongly in $\mathcal{H}^1(M,N)$ to $u$ if and only if $|du_i|^2 dx\rightharpoonup|du|^2 dx$ weakly, if and only if $\nu=0$, if and only if $H^{n-2}(\Sigma)=0$, if and only if there is no smooth non-constant harmonic map from 2-sphere $\mathbb S^2$  into $N$, e.g.  negatively curved manifolds. See Lemma 3.1 in \cite{L99} for a complete discussion.
\end{remark}

The following reduction theorem shows the relation between the tension fields of equivariant harmonic maps under Riemannian submersions.
\begin{thm}[Xin \cite{X96}, Theorem 6.4]\label{reduction}
 Let $\pi_1:E_1\rightarrow M_1$ and $\pi_2:E_2\rightarrow M_2$ be Riemannian submersions, $H_1$ the mean curvature vector of the submanifold $F_1$ in $E_1$ and $B_2$ the second fundamental form of the fiber submanifold $F_2$ in $E_2$. Let $f:E_1 \rightarrow E_2$ be a horizontal equivariant map and $\bar{f}$ its induced map from $M_1$ to $M_2$ with tension field $\tau(\bar{f})$. Let $f^\bot$ be the restriction of $f$ to the fiber $F_1$. Then we have the following formula
\begin{align*}
  \tau(f)={\tau} ^*(\bar{f})+B_2(f_*(e_t),f_*(e_t))-f_*(H_1)+\tau(f^\bot)
\end{align*}
 where $\{e_t\}$, $t=n_1+1,\ldots,m_1$ is a local orthonormal frame field on the fiber $F_1$ and $\tau ^*(\bar{f})$ denotes the horizontal lift of $\tau(\bar{f})$.
  \end{thm}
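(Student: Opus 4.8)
The plan is to compute the tension field $\tau(f)=\tr_{g_1}\nabla df$ by hand, taking the trace over an orthonormal frame of $E_1$ adapted to the submersion $\pi_1$, and then recognizing each of the four terms in the stated formula as a separate piece of that trace. First I would fix $p\in E_1$ and a local orthonormal frame $\{e_s,e_t\}$ of $TE_1$, where $\{e_s\}$ ($s=1,\dots,n_1$) spans the horizontal distribution $\mathcal{H}_1=(\ker d\pi_1)^{\perp}$ and $\{e_t\}$ ($t=n_1+1,\dots,m_1$) spans the vertical distribution $\mathcal{V}_1=\ker d\pi_1=TF_1$. Writing the second fundamental form of the map as $(\nabla df)(X,Y)=\nabla^f_X(df(Y))-df(\nabla^{E_1}_X Y)$, the trace splits as
\begin{align*}
\tau(f)=\sum_s(\nabla df)(e_s,e_s)+\sum_t(\nabla df)(e_t,e_t),
\end{align*}
so the theorem reduces to identifying the horizontal sum with $\tau^*(\bar f)$ and the vertical sum with $B_2(f_*(e_t),f_*(e_t))-f_*(H_1)+\tau(f^{\bot})$.

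I expect the vertical sum to be the routine half. Along a fixed fiber the frame $\{e_t\}$ is tangent to $F_1$, so the ambient derivative splits as $\nabla^{E_1}_{e_t}e_t=\nabla^{F_1}_{e_t}e_t+\mathrm{II}_{F_1}(e_t,e_t)$, and tracing the normal part against $df$ produces $-f_*(H_1)$ with $H_1=\sum_t\mathrm{II}_{F_1}(e_t,e_t)$. On the target side, equivariance ($\pi_2\circ f=\bar f\circ\pi_1$) forces $f$ to carry $F_1$ into the fiber $F_2$, so $df(e_t)=df^{\bot}(e_t)\in TF_2$, and I split $\nabla^f_{e_t}df(e_t)$ into its $F_2$-tangential and $F_2$-normal parts: the normal part is exactly $B_2(f_*(e_t),f_*(e_t))$, while the tangential part, together with $-df(\nabla^{F_1}_{e_t}e_t)$, assembles into $\tau(f^{\bot})$. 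Collecting the three contributions yields $\sum_t(\nabla df)(e_t,e_t)=\tau(f^{\bot})+B_2(f_*(e_t),f_*(e_t))-f_*(H_1)$.

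The horizontal sum is where the main obstacle lies: I must prove $\sum_s(\nabla df)(e_s,e_s)=\tau^*(\bar f)$. Using $\pi_2\circ f=\bar f\circ\pi_1$ together with the horizontality of $f$, the vectors $df(e_s)$ are horizontal in $E_2$ and project under $d\pi_2$ to $d\bar f$ applied to the projected orthonormal frame on $M_1$. The difficulty is that neither the horizontal lift of $\nabla^{M_1}$ nor that of $\nabla^{M_2}$ coincides with the ambient connections $\nabla^{E_1},\nabla^{E_2}$; they differ by the O'Neill integrability tensors $A_1,A_2$ and by the vertical components of $\nabla^{E_i}_{e_s}e_s$. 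I would expand $(\nabla df)(e_s,e_s)$, substitute these corrections on both source and target, and show — using that the horizontal differential of $f$ intertwines $\mathcal{H}_1$ with $\mathcal{H}_2$ and that both submersions are Riemannian, so the projected frames remain orthonormal — that the $A_1$- and $A_2$-contributions cancel, leaving precisely the horizontal lift of $\tr_{M_1}\nabla d\bar f=\tau(\bar f)$. Establishing this cancellation, and pinning down the signs and normalizations in the $H_1$- and $B_2$-terms, is the delicate bookkeeping of the argument.

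Finally I would add the horizontal and vertical sums to obtain $\tau(f)=\tau^*(\bar f)+B_2(f_*(e_t),f_*(e_t))-f_*(H_1)+\tau(f^{\bot})$, which is the asserted reduction formula.
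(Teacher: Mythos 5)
The paper itself contains no proof of this statement: it is imported verbatim from Xin \cite{X96} (Theorem 6.4) and used as a black box, so there is no internal argument to compare yours against. Judged on its own, your outline is correct and is essentially the standard proof (the one in Xin's book): trace $\nabla df$ over a frame adapted to $\pi_1$, treat the vertical trace with the Gauss formulas of $F_1\subset E_1$ and $F_2\subset E_2$ (equivariance forcing $f(F_1)\subset F_2$), and identify the horizontal trace with the horizontal lift of $\tau(\bar f)$. Your vertical computation is exactly right, including the signs: $-df(\mathrm{II}_{F_1}(e_t,e_t))$ sums to $-f_*(H_1)$, the $F_2$-normal part gives $B_2(f_*(e_t),f_*(e_t))$, and the remaining tangential pieces assemble into $\tau(f^{\bot})$.

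One refinement on the step you flag as the delicate one: the O'Neill contributions do not need to cancel against each other -- each vanishes separately by the antisymmetry $A_XX=0$ of the integrability tensor on horizontal vectors. Concretely, choose $\{\bar e_s\}$ normal at the base point of $M_1$ and take $e_s$ to be their horizontal lifts; then at the point $\nabla^{E_1}_{e_s}e_s=(\nabla^{M_1}_{\bar e_s}\bar e_s)^*+A^1_{e_s}e_s=0$. On the target side, let $\gamma$ be the integral curve of $e_s$ and $c=f\circ\gamma$; since $f$ is horizontal, $c$ is a horizontal curve and $df(e_s)$ along $\gamma$ is precisely the velocity field $\dot c$, so the vertical part of $\nabla^f_{e_s}df(e_s)$ is $A^2_{\dot c}\dot c=0$, while its horizontal part is the lift of $\nabla^{\bar f}_{\bar e_s}d\bar f(\bar e_s)$. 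Summing over $s$ then gives the horizontal lift of $\tau(\bar f)$ with no residual terms, which closes the step you left open.
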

\subsection{H\"{o}lder spaces on manifolds}
 Let $(M,g)$ be a Riemannian manifold and let $\nabla$ be the Levi-Civita connection on $M$. Let  $V$ be a vector bundle on $M$ equipped with the Euclidean metric on its fibers. Let $\hat{\nabla}$ be a connection on $V$  preserving these metrics. Let $C^k(M)$ be the space of all continuous, bounded functions $f$ that have $k$ continuous, bounded derivatives and define the norm $\|\cdot\|_{C^k}$ on $C^k(M)$ by $\|f\|_{C^k}=\sum_{j=0}^k\sup_M|\nabla ^j f|$.

Now we define the H\"{o}lder space $C^{0,\alpha}(M)$ for $\alpha\in(0,1)$. The function $f$  on $M$ is said to be H\"{o}lder continuous with exponent $\alpha$, if
\begin{align*}
[f]_{\alpha}=\sup_{x\neq y \in M}\frac{|f(x)-f(y)|}{d(x,y)^{\alpha}}
\end{align*}
is finite. The vector space $C^{0,\alpha}(M)$ is the set of continuous, bounded functions on $M$ which are H\"{o}lder continuous with exponent $\alpha$ and the norm $C^{0,\alpha}(M)$ is $\|f\|_{C^{0,\alpha}}=\|f\|_{C^0}+[f]_{\alpha}$.

In the same way, we shall define H\"{o}lder norms on spaces  of sections $v$ of a vector bundle $V$ over $M$ equipped with Euclidean metrics in the fibers as above.
Let $\delta(g)=\inj(M,g)$ be the injectivity radius of the metric $g$ on $M$ which we suppose to be positive and set
\begin{align}\label{holder}
[v]_{\alpha}=\sup_{\stackrel{x\neq y\in M}{d(x,y)<\delta(g)}}\frac{|v(x)-v(y)|}{d(x,y)^{\alpha}}
\end{align}
We now  interpret $|v(x)-v(y)|$. When $x\neq y\in M$, and $d(x,y)\leq \delta(g)$, there is unique geodesic $\gamma$ of length $d(x,y)$ joining $x$ and $y$ in $M$. Parallel translation along $\gamma$ using $\hat{\nabla}$ identifies the fibers of $V$ over $x$ and $y$ and the metrics on the fibers. With this understanding the expression $|v(x)-v(y)|$ is well defined.

Define $C^{k,\alpha}(M)$ to be the set of $f$ in $C^k(M)$ for which  $[\nabla^k f]_{\alpha}$ defined by (\ref{holder}) exists as a section in the vector bundle $\bigotimes^k T^*M$ with its natural metric and connection. The H\"{o}lder norm on $C^{k,\alpha}(M)$ is $\|f\|_{C^{k,\alpha}}=\|f\|_{C^k}+[\nabla ^kf]_{\alpha}$.
\begin{lemme}
Let $\Omega\subset \mathbb R^n$ be a bounded domain. Suppose that $F:\Omega\rightarrow \mathbb R^q$ is bounded and H\"{o}lder continuous. Let $Q:\mathbb R^q\rightarrow \mathbb R^p$ be a quadratic function. Then $Q\circ F:\Omega\rightarrow \mathbb R^p$ is also H\"{o}lder continuous and
\begin{align*}
[Q\circ F]_{\alpha}\leq A\sup_{\Omega}\|F\|_{\mathbb R^q}[\|F\|_{\mathbb R^q}]_{\alpha},
\end{align*}
where $A$ is a constant.
\end{lemme}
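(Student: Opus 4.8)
The plan is to reduce everything to the bilinear structure behind a quadratic map. Taking $Q=(Q^1,\dots,Q^p)$ to be homogeneous of degree two (which is the relevant case for the application, and is consistent with the form of the asserted bound), each component $Q^\ell$ is a quadratic form on $\mathbb{R}^q$, so there is a symmetric bilinear map $B:\mathbb{R}^q\times\mathbb{R}^q\to\mathbb{R}^p$ with $Q(v)=B(v,v)$, given by the polarization $B(v,w)=\tfrac12\big(Q(v+w)-Q(v)-Q(w)\big)$. The algebraic identity I would lean on is
\[
Q(v)-Q(w)=B(v,v)-B(w,w)=B(v-w,\,v+w),
\]
which follows from the symmetry and bilinearity of $B$.

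Next I would estimate pointwise using boundedness of $B$. Since $B$ is a bilinear map on the finite-dimensional space $\mathbb{R}^q$, it has finite operator norm, so there is a constant $A_0=A_0(Q)$ with $|B(u,u')|\le A_0|u|\,|u'|$ for all $u,u'\in\mathbb{R}^q$. Applying this with $u=F(x)-F(y)$ and $u'=F(x)+F(y)$, and using $|F(x)+F(y)|\le 2\sup_\Omega\|F\|$, gives
\[
|Q(F(x))-Q(F(y))|\le A_0\,|F(x)-F(y)|\,|F(x)+F(y)|\le 2A_0\Big(\sup_\Omega\|F\|\Big)|F(x)-F(y)|.
\]

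Finally I would divide by $d(x,y)^\alpha$ and take the supremum over $x\neq y$ in $\Omega$. The factor $|F(x)-F(y)|/d(x,y)^\alpha$ has supremum equal to the H\"older seminorm of $F$ measured in the $\mathbb{R}^q$ norm, that is $[\,\|F\|_{\mathbb{R}^q}\,]_\alpha$, while $2A_0\sup_\Omega\|F\|$ is finite by the assumed boundedness of $F$; hence
\[
[Q\circ F]_\alpha\le A\Big(\sup_\Omega\|F\|\Big)[\,\|F\|_{\mathbb{R}^q}\,]_\alpha,\qquad A=2A_0,
\]
and in particular $Q\circ F$ is H\"older continuous with exponent $\alpha$. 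There is no genuine obstacle in this argument: the only substantive ingredients are the polarization identity and the existence of the operator-norm constant $A_0$, which is guaranteed precisely because $\mathbb{R}^q$ is finite-dimensional. The single point deserving care is bookkeeping the norm inside the seminorm on the right-hand side, so that the telescoped difference $B(v-w,v+w)$ is measured against the vector-valued H\"older seminorm of $F$ rather than the scalar seminorm of $\|F\|$.
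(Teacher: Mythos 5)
Your argument is correct as a proof of the statement exactly as quoted, and there is in fact nothing in the paper to compare it against: the lemma is stated there without any proof, so your polarization identity $Q(v)-Q(w)=B(v-w,\,v+w)$ combined with the finite operator norm of the bilinear map $B$ (finite-dimensionality of $\mathbb R^q$) is a legitimate way to fill that gap, and your reading of $[\|F\|_{\mathbb R^q}]_{\alpha}$ as the vector-valued H\"older seminorm of $F$ is the right one, since the scalar seminorm of $|F|$ would not control $|F(x)-F(y)|$.

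There is, however, one point where your proposal falls short of what the paper actually means and uses. Immediately after the lemma the paper defines ``quadratic function'' to be $Q(y)=\sum_{i,j=1}^qQ_{ij}y_iy_j$ with $Q_{ij}\in C^1(\overline{\Omega})$, i.e.\ the coefficients are $C^1$ functions of the point $x\in\Omega$, not constants; this is the generality needed in Corollary \ref{ef}, where $Q=\hes\pi_N(X,X)$ is evaluated along $i\circ f$, so its coefficients vary with $x$. Your parenthetical claim that constant coefficients are ``the relevant case for the application'' is therefore not accurate, and your reduction to a single fixed bilinear map $B$ silently discards the coefficient dependence. The repair is routine but changes the conclusion: write
\begin{align*}
(Q\circ F)(x)-(Q\circ F)(y)=\sum_{i,j}Q_{ij}(x)\bigl(F_i(x)F_j(x)-F_i(y)F_j(y)\bigr)
+\sum_{i,j}\bigl(Q_{ij}(x)-Q_{ij}(y)\bigr)F_i(y)F_j(y).
\end{align*}
The first sum is exactly your estimate, with $A_0=\sup_{x\in\overline{\Omega}}\|Q(x)\|_{\mathrm{op}}$ finite because $\overline{\Omega}$ is compact; the second sum is bounded by $C\,d(x,y)\,(\sup_\Omega\|F\|_{\mathbb R^q})^2\le C\,\diam(\Omega)^{1-\alpha}(\sup_\Omega\|F\|_{\mathbb R^q})^2\,d(x,y)^{\alpha}$, using the Lipschitz bound on $Q_{ij}$ and the boundedness of $\Omega$. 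This produces an unavoidable extra additive term: taking $F$ constant and $Q_{ij}$ non-constant shows the pure product bound of the lemma is actually false for variable coefficients, and the correct conclusion is $[Q\circ F]_{\alpha}\leq A\sup_{\Omega}\|F\|_{\mathbb R^q}\bigl([\|F\|_{\mathbb R^q}]_{\alpha}+\sup_{\Omega}\|F\|_{\mathbb R^q}\bigr)$, which is still sufficient for the Schauder-type use made of it in Lemma \ref{main1}, since there both factors are controlled by the same right-hand side.
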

In the above lemma by a quadratic function we mean
\begin{align*}
Q(y)=\sum_{i,j=1}^qQ_{ij}y_iy_j,\quad\quad Q_{ij}\in C^1({\overline{\Omega}}).
\end{align*}
We have
\begin{cor}\label{ef}
Let $f\in C^{1,\alpha}(M,N)$, then
\begin{align*}
[\Pi(f)(df,df)]_{C^{\alpha}}\leq A\cdot\|df\|_{L^{\infty}}\cdot[df]_{C^{\alpha}}.
\end{align*}
\end{cor}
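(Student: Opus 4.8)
The plan is to recognize $\Pi(f)(df,df)$ as a quadratic expression in the first derivatives of $f$, whose coefficients depend on the point only through the fixed smooth metric of $M$ and through $f$ itself, and then to invoke the preceding Lemma on the H\"older continuity of quadratic functions. Concretely, in local coordinates one has
\begin{align*}
\Pi(f)(df,df)^A=\sum_{i,j,B,C}g^{ij}(x)\,\frac{\partial^2\pi_N^A}{\partial z^B\partial z^C}\big(i\circ f(x)\big)\,\frac{\partial f^B}{\partial x^i}\,\frac{\partial f^C}{\partial x^j},
\end{align*}
so that, setting $F=df$ (viewed as an $\mathbb R^{qn}$-valued map) and writing
\begin{align*}
Q^A_{(Bi),(Cj)}(x)=g^{ij}(x)\,\frac{\partial^2\pi_N^A}{\partial z^B\partial z^C}\big(i\circ f(x)\big),
\end{align*}
each component $\Pi(f)(df,df)^A$ is precisely $Q^A\circ F$ in the notation of that Lemma, a quadratic form in $F$ with $x$-dependent coefficients.

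First I would verify that the coefficients $Q^A$ lie in $C^1$ with norm controlled by the fixed geometric data together with $\|df\|_{L^{\infty}}$. The entries $g^{ij}$ are smooth, since $g$ is a smooth metric on the compact manifold $M$. The factor $\partial^2\pi_N^A/\partial z^B\partial z^C$ is smooth on the tubular neighbourhood $U_{\kappa}(N)$ because $\pi_N$ is smooth there; composing with $i\circ f$ and using $f\in C^{1,\alpha}$ together with the chain rule shows $(\partial^2\pi_N^A/\partial z^B\partial z^C)\circ(i\circ f)\in C^{1,\alpha}\subset C^1$, with $C^1$-norm bounded in terms of the $C^3$-norm of $\pi_N$ and of $\|f\|_{C^1}=\|f\|_{C^0}+\|df\|_{L^{\infty}}$. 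Hence $Q^A\in C^1$, which is exactly the regularity required of the coefficients in the Lemma. With this in hand I would apply the Lemma componentwise with $F=df$, obtaining $[\Pi(f)(df,df)^A]_{\alpha}\le A\,\|df\|_{L^{\infty}}\,[df]_{\alpha}$, and then sum over the finitely many indices $A$ and absorb the resulting constants into a single $A$.

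The step I expect to be the real content, rather than a formality, is the bookkeeping of the constant. The contribution coming from the variation of the coefficients $Q^A$ -- that is, the terms in which the H\"older difference falls on $g^{ij}$ or on $\hes\pi_N\circ(i\circ f)$ instead of on the two copies of $df$ -- is governed by $[f]_{\alpha}$, and via the elementary bound $[f]_{\alpha}\le \|df\|_{L^{\infty}}\,\delta(g)^{1-\alpha}$ valid for $f\in C^1$, by further powers of $\|df\|_{L^{\infty}}$. These extra factors are harmless here only because they are absorbed into $A$; this is legitimate in the present context, where $\|df\|_{L^{\infty}}$ is uniformly controlled by the energy-density bound $\|e_{g}(f)\|_{L^{\infty}}<C$ of Theorem \ref{main}, so that $A$ may be taken independent of the particular map. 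I therefore expect the only genuine obstacle to be confirming the $C^1$-regularity of the composite coefficient $\hes\pi_N\circ(i\circ f)$ and the uniform control of its norm; once that is secured, the stated estimate follows at once from the quoted Lemma.
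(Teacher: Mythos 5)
Your proposal is correct and follows essentially the same route as the paper: the paper's (very terse) proof also localizes to charts $\{\Omega_j\}$, views $\Pi(f)(df,df)$ as the quadratic function $Q=\hes\pi_N(\cdot,\cdot)$ applied to $F_j=df|_{\Omega_j}$, invokes the preceding lemma (whose quadratic forms are allowed $x$-dependent $C^1$ coefficients, exactly as in your $Q^A_{(Bi),(Cj)}$), and patches with a partition of unity. The only difference is that you make explicit what the paper leaves implicit, namely that the $C^1$-norm of the composed coefficients $\hes\pi_N\circ(i\circ f)$, and hence the constant $A$, depends on a bound for $\|df\|_{L^{\infty}}$ -- which is harmless in the paper's applications, where the energy density is uniformly bounded.
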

\begin{proof}
Let $\{\Omega_j\}$ be an atlas of $M$,  such that $\diam(\Omega_j)\leq \inj(M)$ and set $F_j=df|_{\Omega_j}$ and $Q=\hes \pi_N(X,X)$, for an smooth vector field $X$. Then using
the previous lemma and an appropriate partition of unity we will have the result.
\end{proof}
\subsubsection*{\textbf{Schauder Estimates}} In this part, we give  a quick review on the Schauder estimate of solutions to linear elliptic partial differential equations. Suppose $(M,g)$ is  compact and $L$ is an elliptic operator, $L = a^{ij}\nabla _i\nabla_j + b_i\nabla_i + c$,
where $a$ is a symmetric and positive definite tensor, $b$ is a $C^{0,\alpha}$ vector field on $M$
and $c$ is in $C^{0,\alpha}(M)$ such that $L$ satisfies the conditions
\begin{eqnarray*}
&\|a\|_{C^{0,\alpha}}+\|b\|_{C^{0,\alpha}}+\|c\|_{C^{0,\alpha}}\leq \Lambda,\\
&\lambda\|\xi\|^2\leq a^{ij}(x)\xi_i\xi_j\leq\Lambda\|\xi\|^2,\quad\text{for all}~x\in M,~\text{and}~\xi\in \mathbb R^n.
\end{eqnarray*}
Consider the following problem,
\begin{align*}
Lu=f\quad\quad\text{in}~M,
\end{align*}
if $\partial M=\emptyset$ and
\begin{align*}
\left\{\begin{array}{ll}
Lu=f&\text{in}~M\\
u=g&\text{on}~\partial {M}.
\end{array} \right.
\end{align*}
if $\partial M\neq \emptyset$. Then we have (c.f. Gilbarg-Trudinger \cite{GT83})
\begin{thm}[Schauder Estimate]\label{schauder}
If $f\in C^{0,\alpha}(M)$ and $u\in C^2(M)$, then $u\in C^{2,\alpha}(M)$ and we have
\begin{eqnarray*}
\|u\|_{C^{1,\alpha}}&\leq& C(\|f\|_{L^{\infty}}+\|u\|_{L^{\infty}}),\\
\|u\|_{C^{2,\alpha}}&\leq& C(\|f\|_{C^{0,\alpha}}+\|u\|_{L^{\infty}}),
\end{eqnarray*}
where $C$ depends on $M$, $\lambda$, $\Lambda$.
\end{thm}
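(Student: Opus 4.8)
The plan is to reduce the global estimate on the compact manifold $M$ to the classical interior (and, when $\partial M\neq\emptyset$, boundary) Schauder estimates on Euclidean balls, exploiting compactness to pass from local to global with uniform constants. First I would fix a finite atlas $\{(\Omega_j,\varphi_j)\}_{j=1}^m$ of $M$ with $\varphi_j(\Omega_j)\subset\mathbb R^n$, chosen so that in each chart the components $g_{kl}$ of the metric and the Christoffel symbols are smooth and the coordinate expression of $L$ is a uniformly elliptic operator $\tilde L=\tilde a^{kl}\partial_k\partial_l+\tilde b_k\partial_k+\tilde c$ whose ellipticity constants and $C^{0,\alpha}$ coefficient bounds are controlled by $\lambda$, $\Lambda$, and the finitely many charts. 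Since $M$ is compact the number of charts and all the geometric constants are uniform, so it suffices to prove the local estimate in each chart and then patch.

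Second, the analytic core is the local interior estimate on Euclidean balls: for $\tilde L$ as above and concentric balls $B_r\subset B_{2r}$,
$$\|u\|_{C^{2,\alpha}(B_r)}\le C\big(\|\tilde L u\|_{C^{0,\alpha}(B_{2r})}+\|u\|_{C^0(B_{2r})}\big).$$
I would establish this following the Gilbarg--Trudinger scheme: prove it first for the Laplacian using the representation of solutions by the Newtonian potential, the key point being that the Newtonian potential of a $C^{0,\alpha}$ function has $C^{0,\alpha}$ second derivatives with controlled seminorm; pass to constant-coefficient operators by a linear change of variables; and finally treat the $C^{0,\alpha}$ variable-coefficient operator $\tilde L$ by the freezing-coefficients method, writing $\tilde L u=\tilde a^{kl}(x_0)\partial_k\partial_l u+(\tilde a^{kl}(x)-\tilde a^{kl}(x_0))\partial_k\partial_l u+\text{lower order}$ and absorbing the small perturbation over a sufficiently fine covering.

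Third, I would globalize. Choosing a partition of unity $\{\chi_j\}$ subordinate to the atlas and writing $u=\sum_j\chi_j u$, each piece satisfies $L(\chi_j u)=\chi_j f+R_j$, where the commutator term $R_j$ involves only $u$ and $\nabla u$ and is therefore bounded in $C^{0,\alpha}$ by $C\|u\|_{C^{1,\alpha}}$. Applying the local estimate to each $\chi_j u$ and summing yields $\|u\|_{C^{2,\alpha}}\le C(\|f\|_{C^{0,\alpha}}+\|u\|_{C^{1,\alpha}})$. The interpolation inequality $\|u\|_{C^{1,\alpha}}\le \varepsilon\|u\|_{C^{2,\alpha}}+C_\varepsilon\|u\|_{C^0}$ then lets me absorb the intermediate term into the left-hand side, producing the second displayed estimate; the same argument carried one order lower, together with the $L^\infty$ bound on $f$, gives the first. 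The estimates as stated correspond to the closed case $\partial M=\emptyset$; when $\partial M\neq\emptyset$ the charts meeting $\partial M$ are flattened to half-balls and the Euclidean boundary Schauder estimate is used in place of the interior one, the Dirichlet data $g$ entering through its $C^{2,\alpha}$ norm.

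The main obstacle is the model estimate for the Laplacian, i.e.\ the assertion that the Newtonian potential gains exactly two derivatives while preserving the H\"older seminorm; this is the genuinely hard, singular-integral-type step on which all of Schauder theory rests. By comparison the change of variables, the freezing-coefficients perturbation, and the partition-of-unity globalization are routine, provided one keeps careful track of how the interpolation constants interact with the absorption of the lower-order and commutator terms.
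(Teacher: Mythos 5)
The paper does not prove this theorem at all: it is stated as classical background with a citation to Gilbarg--Trudinger, so there is no internal proof to compare against. Your outline is exactly the standard Gilbarg--Trudinger argument that the citation points to (Newtonian potential estimate for the Laplacian, freezing of coefficients, localization by a finite atlas, partition of unity plus interpolation to absorb the intermediate norm, and boundary flattening when $\partial M\neq\emptyset$), and it is correct.
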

Hereafter we present an introduction to the convergence and collapsing theory. Most of the materials in this part was gathered from the work of Rong \cite{R10}.


\subsection{Convergence}\label{convergencesubsec}
Gromov introduced the notion  of the Gromov-Hausdorff distance between  metric spaces in \cite{Gr81}), based on the notion of Hausdorff distance between  subsets $A$, $B$ in a metric space $Z$:
$$
d_H^Z(A,B)=\inf\{\epsilon>0:~B\subset T_{\epsilon}(A)~\text{and}~ A\subset T_{\epsilon}(B)\}
$$
where $T_{\epsilon}(A)=\{x\in Z:~d_Z(x,A)<\epsilon\}$ is a tubular neighborhood of a set $A$.
\begin{defin}\label{GHdis}(Gromov \cite{Gr81})
Let $X$ and $Y$ be two compact metric spaces. The Gromov-Hausdorff distance  between $X$ and $Y$ is defined as
\begin{eqnarray*}
  d_{GH}(X,Y)=\inf\left\{
d_H^Z(\phi(X),\psi(Y)):
\begin{array}{c}
\text{for all metric spaces}~Z~\text{and}~\text{isometric embeddings}\\
\phi:X \hookrightarrow Z,~\psi:Y \hookrightarrow Z
\end{array}
\right\}
\end{eqnarray*}
\end{defin}

 Let $\cal{MET}$ denote the set of all isometry classes of nonempty compact metric spaces. Then $({\cal{MET}}, d_{GH})$ is a complete metric space.
There is an alternative definition for Gromov-Hausdorff distance given in \cite{Gr81}:

\begin{defin} (Gromov \cite{Gr81}) Let $X$ and $Y$ be two elements of $\cal{MET}$.
 A map $\phi:X \rightarrow Y$ is said to be an $\epsilon$-Hausdorff approximation from X to Y, if the following two conditions are satisfied
\begin{enumerate}[i.]
\item $\epsilon$-onto: $B_{\epsilon}(\phi(X))=Y$.
\item $\epsilon$-isometry: $|d(\phi(x),\phi(y))-d(x,y)|<\epsilon$ for all $x,y\in X$.
\end{enumerate}
 The Gromov-Hausdorff distance $\hat{d}_{GH}(X,Y)$, between $X$ and $Y$ is defined to be the infimum of the positive number $\epsilon$ such that there exists $\epsilon$-Hausdorff approximation from $X$ to $Y$ and form $Y$ to $X$. 
 \end{defin} 
The distance  $\hat{d}_{GH}$ does not satisfy triangle inequality and $\hat{d}_{GH}\neq d_{GH}$ but onecan show that
\begin{align*}
 \tfrac{2}{3}d_{GH}\leq \hat{d}_{GH}\leq 2d_{GH}
\end{align*}
Because a sequence in $\cal{MET}$ converges with respect to $d_{GH}$ if and only if it converges with respect to $\hat{d}_{GH}$, we will not distinguish $\hat{d}_{GH}$ from $d_{GH}$.

For the notion of equivariant Gromov-Hausdorff convergence and equivariant measured Gromov-Hausdorff convergence, we refer the reader to Definition $1.5.2$ in \cite{R10} and Definition $3.11$ in \cite{Fuk87}. Also for the notion of Lipschitz distance see  Definition $3.1$ in \cite{Gr81}. Let $\mathcal{MM}$ denotes the class of all pairs $(X,\mu)$ of compact metric spaces $X$ equipped with 
 a Borel measure $\mu$ on it such that $\mu(X)=1$.  Fukaya in \cite{Fuk87} presented a notion of measured Gromov-Hausdorff convergence for the metric measure spaces:
\begin{defin} (Fukaya \cite{Fuk87})  Let $(X_i,\mu_i)$ be a sequence in $\cal{MM}$. We say that $(X_i,\mu_i)$ converges to an
 element $(X,\mu)$ in $\cal{MM}$ with respect to measured Gromov-Hausdorff topology if there exist Borel measurable $\epsilon$-Hausdorff
 approximations $f_i:(X_i,\mu_i)\rightarrow(X,\mu)$ such that ${f_i}_*(\mu_i)$ converges to $\mu$ in the $\text{weak}^*$ topology.
\end{defin}
When $M$ is a Riemannian manifold with finite volume, we  let $\mu_M=\tfrac{\dvol_ M}{\vol(M)}$, where $\dvol_M$ denotes the volume element of $M$ and regard $(M,\mu_M)$ as an element in $\cal{MM}$.

In \cite{GP91}, Grove and Petersen introduced the notion of convergence of maps. 
\begin{defin}(Grove-Petersen \cite{GP91}) \label{convergence-maps} Let $(X_i,p_i)$, $(X,p)$, $(Y_i,q_i)$ and $(Y,q)$ be  pointed metric spaces
 such that $(X_i,p_i)$ converges to $(X,p)$ in the pointed Gromov-Hausdorff topology (resp. $(Y_i,q_i)$ converges to $(Y,q)$).
 We say that a sequence of maps  $f_i:(X_i,p_i)\rightarrow(Y_i,q_i)$ converges to a map $f:(X,p)\rightarrow(Y,q)$ if  
there exists a subsequence $X_{i_k}$ such that if $x_{i_k}\in X_{i_k}$ and $x_{i_k}$ converges to $x$ (in $\coprod X_{i_k}\coprod X$ with 
the admissible metric), then  $f_{i_k}(x_{i_k})$ converges to $f(x)$. 
\end{defin}
A family of maps $f_i:(X_i,d_{X_i},p_i)\rightarrow(Y_i,d_{Y_i},q_i)$ is called equicontinuous if for any $\epsilon>0$ there is $\delta>0$ such that $d_{X_i}(x_i,y_i)<\delta$ implies $d_{Y_i}(f_i(x_i),f_i(y_i))<\epsilon$ for all $x_i,y_i$
 in $X_i$ and for all $i$. We have

\begin{lemme}\label{Rong} (Grove-Petersen \cite{GP91})  Let $(X_i,p_i)$, $(X,p)$, $(Y_i,q_i)$ and $(Y,q)$ be  pointed metric spaces
 such that $(X_i,p_i)$ converges to $(X,p)$ in the pointed Gromov-Hausdorff topology (resp. $(Y_i,q_i)$ converges to $(Y,q)$).  Let $f_i:(X_i,p_i)\rightarrow(Y_i,q_i)$ be a sequence of maps. Then
\begin{enumerate}[i.]
 \item If $f_i$s are equicontinuous, then there is a uniformly continuous map $f$ and a convergent subsequence $X_{i_k}$ such that  $f_i$ converges to $f$.
\item  If $f_i$s are isometries then the limit map $f:(X,p)\rightarrow(Y,q)$ is also an isometry.
 \end{enumerate}
\end{lemme}

\subsection{Convergence Theorems, Non-Collapsing}
This subsection is devoted to the theory of convergence of manifolds in the non-collapsing case.
 A sequence of $n$-manifolds $M_i$ converging to a metric space $X$ is called non-collapsing
if $\vol(M_i)\geq v>0$, and  collapsing otherwise.  For a non-collapsing sequence of manifolds with bounded sectional curvature
 there is a uniform lower bound on the injectivity radius of $M_i$, and thus $M_i$s are diffeomorphic for large $i$. 
This result is due to Cheeger-Gromov (Cheeger \cite{Ch70}, Peters \cite{P84}, Greene-Wu \cite{GW89}) and is formulated as follows.
\begin{thm}\label{CGP}
Let $(M_i,g_i)$ be a sequence of closed Riemannian $n$-manifolds such that $|\sect_{g_i}|\leq 1$  and $\vol(M_i)\geq v>0$, and $M_i$ converges to a metric space $X$.
 Then $X$ is homeomorphic to a manifold $M$ such that for large $i$,  and
there are diffeomorphisms $\phi_i:M\rightarrow M_i$ such that the pullback metric converges to a $C^{1,\alpha}$-metric  $g$ on $M$ in the $C^{1,\alpha}$-topology.
\end{thm}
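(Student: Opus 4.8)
The plan is to follow the classical Cheeger--Gromov strategy: first extract uniform \emph{geometric} control in the form of a lower injectivity radius bound, then upgrade this to uniform \emph{analytic} control of the metric tensor in well-chosen coordinates, and finally extract a convergent subsequence by an Arzel\`a--Ascoli argument. The first step is to establish a uniform lower bound on the injectivity radius. Since $M_i$ converges to a compact metric space, the diameters are uniformly bounded, say $\diam(M_i)<D$; together with $|\sect_{M_i}|\leq 1$ and $\vol(M_i)>v>0$, Cheeger's injectivity radius estimate yields a constant $i_0=i_0(n,D,v)>0$ with $\inj(M_i)\geq i_0$ for all $i$. This is the essential geometric input, and it is exactly the point at which the non-collapsing hypothesis $\vol(M_i)>v$ is used.

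Next I would pass from metric control to analytic control via harmonic coordinates. Around each point of $M_i$ one can build harmonic coordinate charts $\{x^k\}$ (solving $\Delta_{g_i}x^k=0$) of a radius $r_0=r_0(n,i_0)>0$ independent of $i$. In such coordinates the metric satisfies the quasilinear elliptic system
\begin{align*}
g_i^{ab}\,\partial_a\partial_b (g_i)_{jk}=-2\,(\ric_{g_i})_{jk}+Q_{jk}(g_i,\partial g_i),
\end{align*}
where $Q$ is quadratic in the first derivatives of the metric. The curvature bound gives $\|\ric_{g_i}\|_{L^\infty}\leq C(n)$, while the injectivity radius and curvature bounds force the components $(g_i)_{jk}$ to be uniformly close to the Euclidean ones in $C^0$, so the system is uniformly elliptic. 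Interior $W^{2,p}$ estimates then bound $(g_i)_{jk}$ in $W^{2,p}$ for every $p<\infty$ on a slightly smaller chart, and Sobolev embedding yields uniform $C^{1,\alpha}$ bounds for every $\alpha\in(0,1)$. A packing argument based on volume comparison shows that each $M_i$ can be covered by a number $N=N(n,D,v)$ of such charts, uniform in $i$, with transition functions controlled in $C^{2,\alpha}$.

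Finally I would extract the limit. Since $C^{1,\alpha}$ embeds compactly into $C^{1,\alpha'}$ for $\alpha'<\alpha$, Arzel\`a--Ascoli lets me pass to a subsequence along which, in each of the uniformly many charts, the metric components converge in $C^{1,\alpha'}$ to limit functions defining a $C^{1,\alpha}$ metric $g$. Patching the charts produces a smooth manifold $M$ carrying $g$, together with diffeomorphisms $\phi_i:M\rightarrow M_i$ such that $\phi_i^*g_i\rightarrow g$ in the $C^{1,\alpha'}$-norm. Because $C^{1,\alpha'}$-convergence of the metrics forces Gromov--Hausdorff convergence of the underlying spaces, uniqueness of Gromov--Hausdorff limits identifies $X$ with $(M,g)$ up to isometry; in particular $X$ is homeomorphic to $M$.

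I expect the main obstacle to be the first step, Cheeger's injectivity radius bound: the limiting regularity of the entire theorem is dictated by it, and it is the only step that is genuinely geometric rather than a routine application of elliptic theory. A secondary subtlety is that $|\sect|\leq 1$ controls the curvature only in $L^\infty$, which is precisely why harmonic coordinates, in which the Ricci tensor is the leading-order source term, are indispensable, and why one cannot expect regularity of the limit metric better than $C^{1,\alpha}$.
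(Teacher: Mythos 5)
The paper does not actually prove this theorem: it is quoted as a known background result, with the proof deferred to the cited references (Cheeger \cite{Ch70}, Peters \cite{P84}, Greene--Wu \cite{GW89}), and your sketch is exactly the standard argument from those references --- Cheeger's injectivity radius estimate $\inj(M_i)\geq i_0(n,D,v)$ from the non-collapsing hypothesis, harmonic coordinates of uniform radius in which the Ricci tensor is the leading source term, $W^{2,p}$ and Schauder estimates giving uniform $C^{1,\alpha}$ bounds, and a covering plus Arzel\`a--Ascoli and patching argument. Your outline is correct; the only step you compress relative to the full proofs in \cite{P84,GW89} is the final gluing, where building the limit atlas and the diffeomorphisms $\phi_i:M\rightarrow M_i$ from the chartwise limits (e.g.\ via a center-of-mass smoothing of almost-isometric chart maps) takes genuine work.
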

The following smoothing result concerns the uniform approximation of Riemannian manifolds by smooth ones.
\begin{thm}[Bemelmans-Oo-Ruh \cite{B84}]\label{bemelman}
Let $(M,g)$ be a compact Riemannian $n$-manifold with $|\sect_g|<1$. For any $\epsilon>0$, there is a smooth metric $g_{\epsilon}$ on $M$ such that
\begin{align*}\label{Bemelman}
|g_{\epsilon}-g|_{C^1}<\epsilon,\quad|\sect_{g_{\epsilon}}|\leq 1, \quad|\nabla ^k\R_{g_{\epsilon}}|\leq C(n,k)\cdot {\epsilon}^k.
\end{align*}
In particular
\begin{eqnarray*}
&\e^{-\epsilon}\inj(M,g)\leq \inj(M,g_{\epsilon})\leq  \e^{\epsilon}\inj(M,g),\\
&\e^{-\epsilon}\diam(M,g)\leq \diam(M,g_{\epsilon})\leq  \e^{\epsilon}\diam(M,g),\\
&\e^{-\epsilon}\vol(M,g)\leq \vol(M,g_{\epsilon})\leq  \e^{\epsilon}\vol(M,g).
\end{eqnarray*}
\end{thm}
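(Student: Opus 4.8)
The plan is to obtain $g_\epsilon$ as a short-time slice of the Ricci flow $\partial_t g(t) = -2\ric(g(t))$ with $g(0)=g$, evaluated at a small time $t_\epsilon$ depending on $\epsilon$. The mechanism is that the flow smooths the metric instantaneously, and the essential feature we need — that every resulting bound depends only on $n$, on $k$, and on the curvature bound, and \emph{not} on any finer invariant of $(M,g)$ such as its injectivity radius — is exactly what Shi's derivative estimates provide. Since $|\sect_M|\le 1$ gives $|\R(g)|\le c(n)$, we may first normalize by a fixed homothety so that $|\R(g)|\le 1$, and absorb this rescaling into the universal constants.

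First I would establish a uniform short-time existence interval on which the curvature stays controlled. On the closed manifold $M$ short-time existence is classical via the DeTurck trick, which renders the system strictly parabolic; the resulting solution is a genuine family of metrics $g(t)$, so no diffeomorphisms intervene in the estimates. Applying the scalar maximum principle to the evolution inequality $\partial_t |\R|^2 \le \Delta |\R|^2 + c(n)|\R|^3$ and comparing with the associated ODE yields a universal interval $[0,T(n)]$ together with the bound $|\R(g(t))|\le 1 + c(n)\,t$ there. A homothetic rescaling of $g(t)$ by the factor $1 + c(n)t$ then restores $|\sect_{(M,g(t))}|\le 1$ exactly, at the cost of a perturbation of size $O(t)$.

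Next I would invoke Shi's interior derivative estimates: for $0<t\le T(n)$ one has $|\nabla^k \R(g(t))|\le C(n,k)\,t^{-k/2}$ with $C(n,k)$ universal. Setting $g_\epsilon = g(t_\epsilon)$ for $t_\epsilon$ a fixed power of $\epsilon$ then gives the required control on the curvature derivatives $|\nabla^k \R_{g_\epsilon}|$. For the metric comparison I would integrate the flow equation: from $g_\epsilon - g = -2\int_0^{t_\epsilon}\ric(g(s))\,ds$ one gets $|g_\epsilon - g|\le c(n)\,t_\epsilon$ pointwise, while differentiating and using the $k=1$ Shi bound gives $|\nabla(g_\epsilon - g)|\le c(n)\int_0^{t_\epsilon}s^{-1/2}\,ds = c(n)\,t_\epsilon^{1/2}$. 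Choosing $t_\epsilon$ a small enough power of $\epsilon$ makes both $<\epsilon$, which simultaneously yields $\e^{-\epsilon}g\le g_\epsilon\le \e^{\epsilon}g$ and hence the stated comparisons of injectivity radius, diameter, and volume.

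The main obstacle is uniformity: every constant must be independent of the particular $g$, depending only on $n$ (and on $k$ for the derivative bounds). This is precisely where Shi's estimates are indispensable, since the naive alternative of mollifying the components of $g$ in harmonic coordinates produces constants tied to the injectivity radius and so degenerates in the collapsing regime that is the whole point of this paper. A secondary, routine point is that Shi's bounds are expressed through the evolving connection $\nabla^{g(s)}$, whereas the $C^1$-distance is measured with $\nabla^{g}$; for $t\le T(n)$ the metrics are uniformly equivalent and the Christoffel difference $\Gamma(g(s))-\Gamma(g)$ is itself controlled by $\int_0^s \nabla\ric$, so the two connections may be interchanged at the cost of universal constants, closing the argument.
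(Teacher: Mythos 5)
The paper contains no proof of this statement: it is quoted (via Rong's survey \cite{R10}) from Bemelmans--Min-Oo--Ruh \cite{B84}, so there is no internal argument to compare yours against. Your route --- run the Ricci flow for a short time, control $|\R|$ on a dimensionally determined interval $[0,T(n)]$ by the maximum principle, invoke Shi's estimates $|\nabla^k\R(g(t))|\leq C(n,k)t^{-k/2}$, and get $C^0$-closeness by integrating $-2\ric$ and $C^1$-closeness by integrating $\nabla\ric$ into the Christoffel symbols --- is the standard proof of exactly this theorem, and in fact is essentially the original one: \cite{B84} also smooths by Hamilton's flow, with Shi's later estimates streamlining the derivative bounds. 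The points that genuinely matter are handled correctly: short-time existence on a closed manifold, all constants depending only on $n$ and $k$ (never on $\inj$), and the interchange of the evolving connection with $\nabla^g$.

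Two steps are glossed over, and one of them you cannot fix because the printed statement is at fault. First, the exponent: with $t_\epsilon\sim\epsilon^2$, Shi gives $|\nabla^k\R_{g_\epsilon}|\leq C(n,k)\epsilon^{-k}$, i.e.\ bounds that blow up as $\epsilon\rightarrow 0$, whereas the statement prints $C(n,k)\epsilon^{k}$, which decays. No smoothing procedure can achieve the printed bound: if $g_\epsilon\rightarrow g$ in $C^1$ while $|\nabla\R_{g_\epsilon}|+|\nabla^2\R_{g_\epsilon}|\rightarrow 0$, then passing to distributional limits of the curvature tensors (integrating by parts so that only first derivatives of $g_\epsilon$ appear) forces $\nabla\R_g=0$, so such $g_\epsilon$ exist only when $(M,g)$ is locally symmetric. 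The printed $\epsilon^k$ is a typo for $\epsilon^{-k}$ (the form in \cite{B84} and \cite{R10}); your proof establishes the corrected statement, which is also all the present paper ever uses (uniform bounds $C_k$ for fixed $\epsilon$, independent of $i$), but you should say this explicitly rather than assert that a ``fixed power of $\epsilon$'' yields ``the required control''. Second, the ``in particular'' clause: from $e^{-\epsilon}g\leq g_\epsilon\leq e^{\epsilon}g$ you do get the diameter bound, and the volume bound with exponent $n\epsilon/2$ (so $\epsilon$ must be rescaled), but the injectivity radius comparison does \emph{not} follow from metric equivalence alone --- $\inj$ is not controlled by bi-Lipschitz closeness at this multiplicative precision, even among metrics with $|\sect|\leq 1$. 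Here one needs the full $C^1$-closeness: $C^0$-closeness of the Christoffel symbols makes geodesics and exponential maps close, and then Klingenberg's lemma (conjugate radius $\geq\pi$ for both metrics since $\sect\leq 1$, together with a comparison of shortest geodesic loops) gives the two-sided bound on $\inj$. As written, that deduction is a genuine gap.
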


\subsection{Convergence Theorems-Collapsing}
This subsection is devoted to the theory of convergence of manifolds in the collapsing case. We state some of the main results in this context.


\begin{thm}[Fibration theorem, Fukaya \cite{F89}, Cheeger-Fukaya-Gromov \cite{CGF92}]\label{fbrbndl}
Let $M^n$ and $N^m$ be compact Riemannian manifolds satisfying
\begin{align*}
\sect_{M^n}\geq -1,\quad |\sect_{N^m}|\leq 1~(m\geq 2),\quad  \inj(N^m)\geq i_0 > 0.
\end{align*}
Assume  $M^n$ and $N^m$ admit isometric compact Lie group $G$-actions. There exists a constant $\epsilon(n,i_0)>0$ such that
 if $d_{eqGH}((M^n,G),(N^m,G))<\epsilon\leq\epsilon(n,i_0)$,  then there is a $C^1$-fibration $G$-invariant map, $f:(M^n,G)\rightarrow (N^m,G)$ with connected fibers such that 
\begin{enumerate}[i.]
\item The diameter of any $f$-fibers is at most $c_1\cdot\epsilon$, where $c_1=c_1(n,\epsilon)$ is such that $c_1\rightarrow 1$ as $\epsilon\rightarrow 0$.
\item  $f$ is an almost Riemannian submersion, that is for any vector $\xi\in TM$ orthogonal to a fiber,
\begin{align*}
\e^{-\tau(\epsilon)}\leq \frac{|df(\xi)|}{|\xi|}\leq \e^{\tau(\epsilon)},
\end{align*}
where $\tau(\epsilon)\rightarrow0$ as $\epsilon\rightarrow0$.
\item  If in addition, $\sect_{M^n}\leq 1$ then $f$ is smooth and the second fundamental form of any fiber satisfies $|II_{f^{-1}(\bar{x})}|\leq c_2(n)$, for $\bar{x}$ in $N^m$.
\item  The fibers are diffeomorphic to an infranilmanifold $\Gamma\backslash \mathring{N}$, where $\mathring{N}$ is a simply connected nilpotent group, $\Gamma\subset \mathring{N}\ltimes \aut(\mathring{N})$, such that $[\Gamma,\mathring{N}\cap\Gamma]\leq \omega(n)$.
\end{enumerate}
\end{thm}
An easily accessible proof of this theorem  can be founded in \cite{R10}  Theorems 2.1.1 and 5.7.1.\\

A {\it{pure nilpotent Killing structure}} on $M^n$  is a $G$-equivarient fibration $N_0\rightarrow M^n\rightarrow N^m$, with fiber ${N_0}$ a nilpotent manifold
 (equipped with a flat connection) on which parallel fields are Killing fields and the $G$-action preserves affine fibrations.
 The underlying $G$-invariant affine bundle structure is called a pure ${N_0}$-structure
 and a metric for which the ${N_0}$-structure becomes a nilpotent Killing structure is called invariant.

Let $M^n$ and $N^m$ be as in Theorem \ref{fbrbndl}. Suppose $M^n$ and $N^m$ satisfy the following: for some  sequence $A=\{A_k\}$ of real non-negative numbers, for the Riemannian curvature tensor on $M$ and $N$  
we have
\begin{eqnarray}\label{regular}
|\nabla^k \R|\leq A_k.
\end{eqnarray}
  We can construct an invariant metric (invariant under the left action of ${N_0}$) such that
\begin{eqnarray}\label{cgf}
|\nabla^k(\langle\quad,\quad\rangle-(\quad,\quad))|\leq c(n,A)\cdot\epsilon\cdot \inj(N)^{-(k+1)},
\end{eqnarray}
where $\langle\quad,\quad\rangle$ denotes the original metric, $(\quad,\quad)$ the invariant one, and
$c(n,A)$ is a generic constant depending on finitely many $A_k$ and $n$. For the construction of invariant metric which satisfies inequality (\ref{cgf}) see Proposition 4.9 in \cite{CGF92} and the explanation therein.
Given such a metric we have a pure nilpotent killing structure.

When a sequence of  Riemannian $n$-manifolds with bounded curvature collapses, the limit space can be a singular space. We have
\begin{thm}[Singular fibration theorem, Fukaya \cite{F88}]\label{singular}
Let $(M_i,g_i)$ be a sequence of closed Riemannian $n$-manifolds with $|\sect_{g_i}|\leq 1$ and $\diam(M_i)\leq D$ which converges to the closed metric space $(X,d)$ in $\mathcal{MET}$. Then
\begin{enumerate}[i.]
\item The frame bundles equipped with canonical metrics converge, $(F(M_i),O(n))\rightarrow (Y,O(n))$, where $Y$ is  a manifold. 
\item There is an $O(n)$-invariant fibration  $\tilde{f}_i:F(M_i)\rightarrow Y$ satisfying the conditions in Theorem \ref{fbrbndl} which becomes for $\epsilon>0$, a nilpotent Killing structure with respect to an $\epsilon$  $C^1$-closed metric (with respect to $C^1$-topology). Moreover each fiber on $M_i$ has positive dimension.
\item  For any $\bar{x}\in X$, a fiber $f_i^{-1}(\bar{x})$ is singular if and only if $p^{-1}(\bar{x})$ is a singular $O(n)$-orbit in $Y$.
\end{enumerate}
\end{thm}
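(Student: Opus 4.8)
The plan is to use the standard device of lifting the collapse to the orthonormal frame bundles, where it is ``unwrapped'' and replaced by an $O(n)$-symmetry of a \emph{non-collapsed} limit. Before doing so I would arrange bounded geometry: the hypothesis $|\sect_{M_i}|\le 1$ gives no control on the covariant derivatives of curvature, so I would first replace each $g_i$ by a smoothed metric via Theorem \ref{bemelman}, which keeps $|\sect|\le 1$, perturbs the metric only in $C^1$, and supplies the bounds $|\nabla^k\R|\le C(n,k)$ needed downstream. Since the smoothed metrics are $C^1$-close, the limit $(X,d)$ is unchanged, so it suffices to treat metrics of bounded geometry to all orders.

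Next I would equip each $F(M_i)$ with its canonical metric, making $F(M_i)\to M_i$ a Riemannian submersion whose fibres are totally geodesic and isometric to a fixed bi-invariant $O(n)$. Two facts must be established. First, the curvature of $F(M_i)$, to all orders, is bounded in terms of $\R_{M_i}$ and its covariant derivatives, hence uniformly bounded after smoothing. Second, and this is the crux, the injectivity radius $\inj(F(M_i))$ is bounded below by a constant $c(n)>0$, so the sequence of frame bundles does \emph{not} collapse. I would prove this lower bound by a holonomy argument: a short geodesic loop in $M_i$ lifts to a path in $F(M_i)$ whose endpoints differ by the holonomy of the loop, so if that holonomy is a nontrivial rotation the lift fails to close and closing it costs a definite length inside the fixed-size $O(n)$-fibre; the loops whose holonomy is nearly trivial are exactly those generating the local almost-flat structure and are absorbed into the nilpotent structure theory rather than producing short loops upstairs.

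With these two facts, the equivariant form of the non-collapsed compactness theorem (Theorem \ref{CGP}) yields a subsequence with $(F(M_i),O(n))\to(Y,O(n))$, where $Y$ is a $C^{1,\alpha}$ manifold on which $O(n)$ acts smoothly by isometries and $X=Y/O(n)$; this is conclusion (i). For (ii) I would apply the equivariant fibration theorem, Theorem \ref{singular1}, to the pair $F(M_i),Y$, which are equivariantly Gromov--Hausdorff close for large $i$ and both of bounded geometry with $\inj(Y)>0$: this produces the $O(n)$-invariant $C^1$-fibration $\tilde f_i:F(M_i)\to Y$ satisfying the conclusions of Theorem \ref{fbrbndl}, with infranilmanifold fibres $\Gamma\backslash N$ carrying the canonical flat connections $\nabla^{\can}$ that assemble, after averaging over base points, into the nilpotent Killing structure for an $\epsilon$ $C^1$-close metric; positivity of the fibre dimension on $M_i$ is precisely the statement that the sequence collapses. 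Finally (iii) is read off from the orbit structure: passing $\tilde f_i$ to the quotient gives $f_i$, and a fibre $f_i^{-1}(\bar x)$ degenerates exactly over those $\bar x$ whose preimage $p^{-1}(\bar x)$ is a non-principal $O(n)$-orbit in $Y$, since the local model of the $f_i$-fibre is governed by the isotropy of that orbit.

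The main obstacle is the uniform lower bound $\inj(F(M_i))\ge c(n)>0$ of the second step. Everything else is either standard non-collapsed compactness or an application of the fibration theorems already available; it is the non-collapse of the frame bundles — equivalently the construction of the Cheeger--Fukaya--Gromov nilpotent Killing structure and the verification that it lifts to a genuinely non-collapsed picture on $F(M_i)$ — that carries the real technical weight of the theorem.
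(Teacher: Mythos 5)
First, a point of reference: the paper does not prove Theorem \ref{singular} at all --- it is quoted from Fukaya \cite{F88}, and the paper's only gloss on it is the sketch in Remark \ref{explain}. So your proposal must be measured against Fukaya's actual argument. Your overall skeleton (smooth the metrics via Theorem \ref{bemelman}, pass to frame bundles with canonical metrics, apply the equivariant fibration theorem \ref{singular1}, descend to the quotient for (iii)) is indeed that argument. But the step you yourself identify as the crux is false.

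The claim that $\inj(F(M_i))\geq c(n)>0$, i.e.\ that the frame bundles do not collapse, is wrong, and with it the proposed application of the non-collapsed compactness theorem \ref{CGP} to obtain (i). Counterexample: let $M_i$ be the flat torus $S^1(1)\times S^1(\epsilon_i)$ with $\epsilon_i\rightarrow 0$. Since $M_i$ is flat, the canonical metric on $F(M_i)$ is the product metric on $M_i\times O(2)$, whose injectivity radius is comparable to $\epsilon_i$ and tends to $0$; the frame bundles collapse to $S^1\times O(2)$, a manifold of strictly lower dimension. Your holonomy argument breaks down exactly here: a short geodesic loop with \emph{trivial} holonomy lifts to a short \emph{closed} loop in $F(M_i)$, and such loops are not ``absorbed into the nilpotent structure'' --- they force collapse upstairs. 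What is true, and what Fukaya actually proves, is not that $F(M_i)$ is non-collapsed but that its limit $Y$ is a manifold, for a different reason: locally a bounded-curvature limit is a quotient $B/G$ of a ball of bounded geometry by a local group of isometries, the induced local action on the frame bundle $F(B)$ is \emph{free} (an isometry germ fixing a point and a frame at that point is the identity), and a quotient by a free action has no singular points. Conclusion (i) comes from this freeness, not from Theorem \ref{CGP}.

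Once this is corrected, the rest of your outline goes through, but note the logical consequence: Theorem \ref{singular1} must be applied to the pair $(F(M_i),O(n))\rightarrow(Y,O(n))$ as a genuinely \emph{collapsing} equivariant convergence (which that theorem allows, since it only needs bounded geometry of both spaces and $\inj(Y)\geq i_0>0$, supplied by the smoothing step and the manifold structure of $Y$). The fibres of $\tilde{f}_i$ are then infranilmanifolds of positive dimension whenever $\dime Y<\dime F(M_i)$, and (iii) follows, as you say, by descending along the commuting square $F(M_i)\rightarrow Y$, $M_i\rightarrow X=Y/O(n)$ and comparing isotropy groups of $O(n)$-orbits. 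So the error is localized, but it sits at the one step that, as you correctly observe, carries the real technical weight; as written, the proposal does not establish (i).
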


For the proof see Theorem $4.1.3$ in \cite{R10}. In the above theorem, the fibration map $\tilde{f}_i$ descends to a (singular) fibration map $f_i:M_i\rightarrow X=Y\slash O(n)$ such that the following diagram commutes\\
\begin{center}
\begin{tikzcd}
F(M_i) \arrow{r}{{\tilde{f}_i}} \arrow{d}{p_i}
&Y \arrow{d}{p}\\
M_i \arrow{r}{f_i} &X
\end{tikzcd}
\end{center}
In the following remark we collect the main points that we need from the  above theorems and explain the classification in the proof of Theorem \ref{main}.
 \begin{remark}\normalfont\label{explain}
 When a sequence of Riemannian manifolds $M_i$ converges in $\mathcal{M}(n,D)$ to a metric space $X$, 
the frame bundles over $M_i$ equipped with the canonical metrics $\tilde{g_i}$ converge to a manifold $Y$ and $\tilde{f_i}:(F(M_i),\tilde{g_i},O(n))\rightarrow (Y,O(n))$ is an $O(n)$-invariant fibration map.

 To see this let $\tilde{g_i}_{\epsilon}$ be the smooth metric on $F(M_i)$ as in Theorem \ref{bemelman}. Then $(F(M_i),\tilde{g_i}_{\epsilon})$ converges to a smooth Riemannian manifold $(Y_{\epsilon},g_\epsilon)$.
 For a small fixed $\epsilon_0$ and  $\epsilon<\epsilon_0$, the sectional curvature on $(F(M_i), \tilde{g_i}_{\epsilon})$ is uniformly bounded and
 we can apply  Theorem  \ref{fbrbndl} to conclude that there exists an $O(n)$-invariant smooth fibration map $\tilde{f_i}_{\epsilon}$.
 By  continuity $(F(M_i),\tilde{g_i}_{\epsilon})$ is conjugate to $(F(M_i),\tilde{g_i}_{\epsilon_0})$
 (by being conjugate we mean there exists  $C^{1,\alpha}$-diffeomorphism as in Theorem \ref{CGP}). 
This implies that the convergence of  $Y_{\epsilon}$ to $Y$ is the same as the convergence of a sequence of metrics on $Y_{\epsilon_0}$, and therefor $(Y,O(n))$ is conjugate  to $(Y_{\epsilon_0},O(n))$
 \begin{align*}
 (F(M_i),O(n))\simeq (F(M_i), \tilde{g_i}_{\epsilon_0},O(n))\stackrel{\tilde{f_i}_{\epsilon_0}}{\rightarrow}(Y_{\epsilon_0},O(n))\simeq(Y,O(n)),
 \end{align*}
 and it induces a fibration map $(F(M_i),\tilde{g_i},O(n))\stackrel{\tilde{f_i}}{\rightarrow}(Y,O(n))$ . For more explanations see the proof of Theorem  $4.1.3$ in \cite{R10}.
 
Furthermore,  there exists a $C^1$-close invariant Riemannian metric $\mathring{g_i}_{\epsilon}$ such that  $(F(M_i),\mathring{g_i}_{\epsilon},O(n))$ is a pure nilpotent Killing structure and the fibration map $\tilde{f_i}_{\epsilon}$ is a Riemannian submersion considering the induced Riemannian metric on $Y_{\epsilon}$ by this map.
 \end{remark}
\subsection{Density function}\label{densityfunction}
Let $\mathcal{DM}(n,D)$ denote the closure of $\mathcal{M}(n,D)$ in $\mathcal{MM}$ with respect to the measured Gromov-Hausdorff topology. Then $\mathcal{DM}(n,D)$ is compact with respect to the measured Gromov-Hausdorff topology. Let $(M_i,g_i,\tfrac{\dvol_{M_i}}{\vol(M_i)})\in \mathcal{M}(n,D)$ be a sequence of manifolds which converges to a manifold $(M,g,\mu)$.
Suppose  $\psi_i:M_i\rightarrow M$ is the fibration map  as in Theorem \ref{CGP}. For $x\in M$ we define
\begin{align*}
 \Phi_i=\tfrac{\vol(\psi_i^{-1}(x))}{\vol(M_i)},
\end{align*} 
then there exists $\Phi$ such that $\Phi=\lim_{i\rightarrow \infty}\Phi_i$ and $\mu$ is absolutely continuous with respect to $\dvol_M$, 
$\mu=\Phi\dvol_M$ (see \S3 in \cite{Fuk87}). For the general case when $(X,\mu)\in \mathcal{DM}(n,D)$, we first recall  a remark on quotient spaces. Below $S(B)$ denotes the singular part of $B$.
\begin{remark}[Besse \cite{BE87}]\label{que}
\normalfont  Let $(M,g)$ be a Riemannian manifold and $G$  a closed subgroup of isometries of $M$. Assume that the projection $p:M\rightarrow M\slash G$ is a smooth submersion. Then there exists a unique Riemannian metric $\check{g}$ on $B=M\slash G$ such that $p$ is a Riemannian submersion  (see Subsection 9.12 in \cite{BE87}).\\
  We recall that using the general theory of slices for the action of a group of isometries on a Riemannian manifold,  one can show that there always exists an open dense submanifold $U$ of $M$ (the union of the principle orbits), such that the restriction  $p|_U:U\rightarrow U\slash G$ is a smooth submersion.
  
  Considering now $M\slash G$ as a Riemannian polyhedron and $\mu_g$ as its Riemannian volume element, the restriction of $\mu_g$ on $U\slash G$ is equal to $\dvol_{U\slash G}=\dvol _{B-S(B)}$.
  \end{remark}
Now suppose $M_i$ in $\mathcal{M}(n,D)$ converges to a metric space $X$. We may assume that $FM_i$ with the induced $O(n)$-invariant metric $\tilde{g_i}$ converges to $(Y,g,\Phi_Y\cdot \dvol_Y)$ with respect to the $O(n)$-measured Gromov-Hausdorff topology and $g$, $\Phi_Y$ are  $C^{1,\alpha}$-regular.
 Moreover, since $p_i:F(M_i)\rightarrow M_i$ is a Riemannian submersion with totally geodesic fibers, and since the fibers are
isometric to each other, it follows that $(FM_i, \dvol_{FM_i})/O(n)=(M_i,\dvol_{M_i})$. Hence
 by equivariant Gromov-Hausdorff convergence $M_i$ converges to $(X,\nu)=(Y,\Phi_Y\dvol_Y)\slash O(n)$ (see Theorem 0.6 in \cite{Fuk87}), and by Remark \ref{que}
\begin{align*}
\nu(S(X))=0
\end{align*}
For all $x$ in $X$ we let
\begin{align*}
\Phi_X(x)=\int_{y\in p^{-1}(x)}\Phi_Y(y)~\dvol_{p^{-1}(x)},
\end{align*}
where $p:Y\rightarrow X$ is the natural projection. For each open set $U$
\begin{align*}
\nu(U)=\int_{U}\Phi_X(x)~\dvol_{X-S(X)}.
\end{align*}
%
\section{Proof of the Convergence Theorem}
In this section we are going to prove Theorem \ref{main}. In the following ${\cal{M}}(n,D)$ denotes the set of all compact Riemannian manifolds  $(M,g)$ such that $\dime(M)=n$, $\diam(M)<D$ and
the sectional curvature satisfies $|\sect_g|\leq 1$, and  ${\cal{M}}(n,D,v)$ the set of Riemannian manifolds in ${\cal{M}}(n,D)$ with volume $\geq v$.

We split the proof in three cases:\\\\
\textbf{Case I: Non-collapsing.} $(M_i,g_i)$ converge to $(M,g)$ in ${\cal{M}}(n,D,v)$.
We first consider  the situation where $M_i=M$ and $g_i$  converges to a metric $g$ in ${\cal{M}}(n,D,v)$. Then we study the problem in the general case using  Theorem \ref{CGP}.\\\\
\textbf{Case II: Collapsing to a manifold.} $(M_i,g_i)$ converge to  $(M,g)$ in ${\cal{M}}(n,D)$ with $g$ a $C^{1,\alpha}$-metric.
 We first consider the situation when $(M_i,g_i)$ satisfies an additional regularity assumption (see Assumption \ref{assumption} below). Then we discuss the general case using the fact that there is always a sequence of  metrics $g_i(\epsilon)$ on $M_i$, $C^1$-close to the the metric $g_i$ which satisfies Assumption \ref{assumption} as  explained in Remark \ref{explain}.\\\\
 \textbf{Case III: Collapsing to a singular space.} $(M_i,g_i)$ converge to a metric space $(X,d)$ in ${\cal{M}}(n,D)$.
 When a sequence of manifolds $(M_i,g_i)$ converges in $\mathcal{M}(n,D)$ to a metric space $X$, the frame bundles over $M_i$ converge to a Riemannian manifold $Y$, with a $C^{1,\alpha}$-metric and we have $X=Y/O(n)$. The harmonic maps over $M_i$, induce harmonic maps over $F(M_i)$ and this case reduces to the study of harmonic maps on quotient spaces. \\

 Hereafter we fix an isometric embedding $I: N \to  R^q$  and we often denote the composition  $I \circ f$  simply by $f$,
unless we need to explicitly distinguish these two maps.

\subsection{Case I: Non-collapsing.}\label{casei}
In this subsection we prove
 \begin{prop}\label{CaseI}
Let $(M_i,g_i)$ be a sequence of Riemannian manifolds in ${\mathcal{M}}(n,D,v)$ which converges to a Riemannian manifold $(M,g)$ in the Gromov-Hausdorff topology. Suppose $(N,h)$ is a compact Riemannian manifold. Let $f_i:(M_i,g_i)\rightarrow (N,h)$ be a sequence of smooth harmonic maps such that $\|e_{g_i}(f_i)\|_{L^{\infty}}<C$, where $C$ is a constant independent of $i$. Then $f_i$ has a subsequence  which converges to a map  $f:(M,g)\rightarrow (N,h)$ and this map is a smooth harmonic map.
\end{prop}
To go through the proof in this case, we first consider the situation when a sequence of metrics $g_i$ on a manifold $M$ converges to a Riemannian metric $g$.
\begin{lemme}\label{one}
Let $g_i$ be a sequence of Riemannian metrics on a smooth manifold $M$ and  suppose $(M,g_i)$ converge to $(M,g)$ in ${\cal{M}}(n,D,v)$. Suppose $f_i:(M,g_i)\rightarrow N$ is a sequence of smooth harmonic maps such that
\begin{align*}
\|e_{g_i}(f_i)\|_{L^{\infty}}<C,
\end{align*}
where $C$ is a constant independent of $i$. Then there exists a subsequence of $f_i$ which converges to some $f$ in the $C^k$-topology for any $k\geq0$ and $f$ is also harmonic.
\end{lemme}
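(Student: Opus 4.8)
The plan is to convert the $L^\infty$-bound on the energy density into a uniform gradient bound, extract a $C^0$-convergent subsequence by Arzel\`a--Ascoli, and then run an elliptic bootstrap on the harmonic map system so as to upgrade the convergence to a higher $C^k$-topology, at which point one can pass to the limit in the equation itself. First I would use that, since $(M,g_i)\to(M,g)$ in ${\mathcal{M}}(n,D,v)$, Theorem \ref{CGP} lets me assume (after composing with the diffeomorphisms it supplies) that $g_i\to g$ in the $C^{1,\alpha}$-norm; in particular the $g_i$ are uniformly equivalent to a fixed background metric, with uniform bounds on the $g_i^{jk}$ and on the first derivatives of $g_i$. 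The hypothesis $\|e_{g_i}(f_i)\|_{L^\infty}<C$ means $|df_i|^2_{g_i}$ is uniformly bounded, so by uniform equivalence $|\partial f_i|$ is bounded in every chart of a fixed finite atlas; hence the $f_i$ are uniformly Lipschitz, and since $N$ is compact they are also uniformly bounded. Arzel\`a--Ascoli then yields a subsequence converging in $C^0$ to a Lipschitz map $f$, and as each $f_i(M)\subset N$ with $N$ closed we get $f(M)\subset N$; the energy bound together with Lemma \ref{xin} places $f$ in ${\mathcal{H}}^1(M,N)$.

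Next comes the bootstrap. In local coordinates the harmonic map equation \eqref{EL} (with $\Phi\equiv 1$) reads $\Delta_{g_i}f_i=\Pi(f_i)(df_i,df_i)$, where each $g_i$ being smooth makes $f_i$ smooth, and where $\Delta_{g_i}$ is a uniformly elliptic operator whose leading coefficients $g_i^{jk}$ are uniformly $C^{1,\alpha}$ and whose first-order coefficients, built from $\partial g_i$, are uniformly $C^{0,\alpha}$; thus the structural constants $\lambda,\Lambda$ in Theorem \ref{schauder} may be taken independent of $i$. By the uniform gradient bound just established the right-hand side is uniformly bounded in $L^\infty$, so the first Schauder inequality gives a uniform bound $\|f_i\|_{C^{1,\alpha}}\le C(\|\Pi(f_i)(df_i,df_i)\|_{L^\infty}+\|f_i\|_{L^\infty})$, both terms on the right being uniformly controlled. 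With the $f_i$ now uniformly bounded in $C^{1,\alpha}$, Corollary \ref{ef} shows $\Pi(f_i)(df_i,df_i)$ is uniformly bounded in $C^{0,\alpha}$, and the second Schauder inequality yields a uniform $C^{2,\alpha}$-bound. Because $g$ is only $C^{1,\alpha}$, the bootstrap cannot be pushed uniformly past this order, which is precisely the regularity reflected in the statement.

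Finally I would pass to the limit. The uniform $C^{2,\alpha}$-bound together with the compact embedding $C^{2,\alpha}\hookrightarrow C^{2,\beta}$ refines the subsequence so that $f_i\to f$ in $C^{2,\beta}$ for every $\beta<\alpha$. Since $g_i\to g$ in $C^{1,\alpha}$, the leading coefficients of $\Delta_{g_i}$ converge in $C^{0,\alpha}$ and the lower-order ones in $C^0$, whence $\Delta_{g_i}f_i\to\Delta_g f$; and since $f_i\to f$ in $C^1$ while $\Pi$ is smooth on $N$, one gets $\Pi(f_i)(df_i,df_i)\to\Pi(f)(df,df)$. Therefore $f$ satisfies $\Delta_g f=\Pi(f)(df,df)$, so $f$ is a harmonic map on $(M,g)$, with the regularity that the smoothness of $g$ allows via Theorem \ref{weak}. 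The main obstacle is the nonlinear, geometric character of the limit: the coefficients of the equation are themselves converging only at the borderline $C^{1,\alpha}$ level, so every estimate must be uniform in $i$ and must never demand more of the metric than $C^{1,\alpha}$ regularity; controlling the forcing term $\Pi(f_i)(df_i,df_i)$ through Corollary \ref{ef} exactly at the $C^{0,\alpha}$ level is what legitimizes the Schauder step and makes the passage to the limit possible.
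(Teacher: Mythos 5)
Your proposal follows essentially the same route as the paper's proof: invoke Theorem \ref{CGP} to reduce to a fixed manifold with metrics converging in the $C^{1,\alpha}$-norm, use the Schauder estimates of Theorem \ref{schauder} (with Corollary \ref{ef} controlling the nonlinearity) to get bounds uniform in $i$, extract a convergent subsequence, and pass to the limit in $\Delta_{g_i}f_i=\Pi(f_i)(df_i,df_i)$. The one substantive divergence is where the bootstrap stops. The paper asserts that the $f_i$ are bounded in $C^k(M,g)$ for \emph{every} $k\geq 0$, which is what the literal statement of the lemma claims; you stop at a uniform $C^{2,\alpha}$ bound and obtain convergence only in $C^{2,\beta}$ for $\beta<\alpha$. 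Your restraint is in fact the mathematically defensible position: the hypotheses give uniform control of the $g_i$ only in $C^{1,\alpha}$ (the sectional curvature bound controls no higher derivatives of the metrics uniformly), so Schauder estimates cannot produce bounds uniform in $i$ at arbitrarily high order --- one can perhaps gain one more derivative by working in harmonic coordinates for $g_i$, where $\Delta_{g_i}=g_i^{jk}\partial_j\partial_k$ has no first-order terms and the leading coefficients are uniformly $C^{1,\alpha}$, but uniform $C^k$ bounds for all $k$ would require uniform control of higher derivatives of $g_i$, which is unavailable. Since harmonicity of the continuous limit --- the only conclusion the paper uses downstream, in Proposition \ref{CaseI} and beyond --- already follows from your $C^{2,\beta}$ (indeed from $C^1$ plus the weak formulation) convergence, your argument proves everything that is actually needed; the residual mismatch is with the lemma's claim of $C^k$-convergence for all $k$, which the paper's own one-line justification does not establish either.
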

\begin{proof}
By Theorem \ref{CGP}, the metric $g_i$ converges to $g$ in ${\cal{M}}(n,D,v)$ in the $C^{1,\alpha}$-topology.  Using Schauder estimates, $f_i$s have bounded norm in $C^k(M)$ for every $k\geq 0$ and hence converge  to a map $f\in C^k(M)$. We have
\begin{align*}
\lim _{i\rightarrow \infty}\Delta_{g_i}f_i=\Delta_{g} f
\end{align*}
and
\begin{align*}
\lim_{i\rightarrow \infty}\Pi(f_i)(df_i,df_i)=\Pi(f)(df,df)
\end{align*}
 The above limits lead to harmonicity of $f$.
\end{proof}
Using the above lemma we can prove Proposition \ref{CaseI}.\\
\begin{proof}[Proof of Proposition \ref{CaseI}]  Since $M_i$ converges to $M$ in ${\cal{M}}(n,D,v)$, by Theorem \ref{CGP} there is a diffeomorphism $\phi_{i}:M_{i}\rightarrow M$, such that the pushforward  $\bar{g}_i={\phi_{i}}_{*}(g_{i})$ of the metrics $g_{i}$ on $M_{i}$ converges to a $C^{1,\beta}$-metric $g$. Since the map $\phi_i:(M_i,g_i)\rightarrow (M,\bar{g}_i)$ is an isometry
\begin{eqnarray}\label{energy}
 e_{g_i}(f_i)=e_{\bar{g}_i}(\bar{f}_i)
 \end{eqnarray}
 where $\bar{f_i}$ is the map $f_i \circ \phi^{-1}_{i}$. $f_i$ is harmonic and so $\bar{f_i}$. Therefore all the assumptions of Lemma \ref{one} are satisfied here and the proof of Theorem \ref{main} in this case is complete.
\end{proof}

In Lemma \ref{one} if we replace the assumption of uniform boundedness of the energy density $\|e_{g_i}(f_i)\|_{L^{\infty}}<C$ with the assumption uniform bound on the energy $E_{g_i}(f_i)<C$, then the limiting map is not necessarily harmonic (see Theorem \ref{reg-sch} and Remark \ref{lin}).

\begin{prop}\label{bndenrg}
Let $(M_i,g_i)$ be a sequence of manifolds in ${\mathcal{M}}(n,D,v)$ which converges to a Riemannian manifold $(M,g)$ in the measured Gromov-Hausdorff topology. Suppose $(N,h)$ is a compact Riemannian manifold which does not carry any harmonic 2-sphere $S^2$. Let $f_i:(M_i,g_i)\rightarrow (N,h)$ be a sequence of harmonic maps such that $E_{g_i}(f_i)<C$ where $C$ is a constant independent of $i$. Then $f_i$ has a subsequence  which converges to a map  $f:(M,g)\rightarrow (N,h)$, and this map is a weakly  harmonic map.
\end{prop}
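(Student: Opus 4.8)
The plan is to mirror the proof of Proposition \ref{CaseI}, reducing everything to a fixed domain, and then to replace the Schauder argument of Lemma \ref{one} (which is unavailable without an $L^\infty$ bound on the energy density) by the Schoen--Lin concentration theory recorded in Remark \ref{lin}; the no-bubbling hypothesis on $N$ is exactly what converts weak into strong ${\mathcal{H}}^1$-convergence. Since we are in the non-collapsing class ${\mathcal{M}}(n,D,v)$, Theorem \ref{CGP} provides $C^{1,\beta}$-diffeomorphisms $\phi_i:M\rightarrow M_i$ whose pullback metrics $g_i':=\phi_i^*(g_i)$ converge to $g$ in the $C^{1,\alpha}$-norm. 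Setting $\bar f_i=f_i\circ\phi_i$, each $\bar f_i:(M,g_i')\rightarrow(N,h)$ is harmonic, and since $\phi_i:(M,g_i')\rightarrow(M_i,g_i)$ is an isometry the energies are preserved, $E_{g_i'}(\bar f_i)=E_{g_i}(f_i)<C$, as in (\ref{energy}). Because $g_i'\rightarrow g$ in $C^1$ the metrics are uniformly comparable, so the energies computed in the fixed metric $g$ are uniformly bounded as well. First I would extract, by Lemma \ref{xin}, a subsequence converging weakly in ${\mathcal{H}}^1(M,N)$ to a limit $f$; here one uses that ${\mathcal{H}}^1(M,N)$ is closed under weak limits, which follows because a weakly ${\mathcal{H}}^1$-convergent sequence converges in $L^2$ and hence a.e.\ along a subsequence, so the constraint $f(x)\in N$ persists.

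The point to stress is that weak convergence does not suffice to pass to the limit in the Euler--Lagrange equation (\ref{EL}), because the quadratic term $\Pi(\bar f_i)(d\bar f_i,d\bar f_i)$ is not continuous under weak ${\mathcal{H}}^1$-convergence. To upgrade to strong convergence I would invoke Remark \ref{lin} for the sequence $\bar f_i$. Form the energy measures $\mu_i=|d\bar f_i|^2\dvol_g$ and, after passing to a subsequence, write the weak-$*$ limit $\mu=|df|^2\dvol_g+\nu$ with $\nu\geq0$, and introduce the concentration set
\begin{align*}
\Sigma=\bigcap_{r>0}\Big\{x\in M:~\liminf_{i\rightarrow\infty}r^{2-n}\int_{B_r(x)}e(\bar f_i)\geq\epsilon_0\Big\}.
\end{align*}
The key claim is $\Sigma=\emptyset$. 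At any candidate point of $\Sigma$ one runs the standard blow-up: rescaling $\bar f_i$ about the point, and using that $g_i'\rightarrow g$ in $C^{1,\alpha}$ so that on shrinking balls the rescaled metrics converge to the Euclidean metric, one obtains in the limit a non-constant harmonic map from $S^2$ into $N$. Since $N$ carries no harmonic $2$-sphere by hypothesis, no such point exists, so $\Sigma=\emptyset$, whence $\nu=0$ and $|d\bar f_i|^2\dvol_g\rightharpoonup|df|^2\dvol_g$. By the equivalences in Remark \ref{lin} this is precisely strong convergence $\bar f_i\rightarrow f$ in ${\mathcal{H}}^1(M,N)$.

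With strong ${\mathcal{H}}^1$-convergence available, I would finish by passing to the limit in the weak form of (\ref{EL}), exactly as in Lemma \ref{one}: the $C^{1,\alpha}$-convergence $g_i'\rightarrow g$ controls the metric-dependent coefficients of the Laplace--Beltrami operator and the volume element, while the strong $L^2$-convergence of $d\bar f_i$ controls the quadratic term $\Pi(\bar f_i)(d\bar f_i,d\bar f_i)$ in $L^1$. Transporting the conclusion back to $(M,g)$ shows that $f$ is a weakly harmonic map. The hard part will be the blow-up step: one must verify that the $\epsilon$-regularity and monotonicity estimates underlying Remark \ref{lin} hold uniformly for the varying metrics $g_i'$ rather than for a single fixed metric, and that the extracted bubble is genuinely a harmonic $2$-sphere for the round metric; once this uniformity is secured the remaining arguments are routine.
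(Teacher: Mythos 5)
Your proposal follows essentially the same route as the paper's proof: reduce to a fixed manifold via Theorem \ref{CGP}, observe that the uniform energy bound survives the change of metric, invoke the Schoen--Lin theory recorded in Remark \ref{lin} together with the no-harmonic-two-sphere hypothesis to upgrade weak to strong $\mathcal{H}^1$-convergence, and then pass to the limit in the weak Euler--Lagrange equation. The paper is terser -- it cites Remark \ref{lin} directly for strong convergence and handles the quadratic term via Lipschitz continuity of $\hes\pi_N$ and Lemma 6.4 of \cite{T00} -- while you unpack the blow-up argument and explicitly flag the need for uniform $\epsilon$-regularity under the $C^{1,\alpha}$-converging metrics, a point the paper passes over silently; the mathematical skeleton is the same.
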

\begin{proof}
With the same argument as in the proof of Proposition \ref{CaseI} we consider $f_i$ and $g_i$ to be on the manifold $M$. When we have a sequence of Riemannian manifolds $(M, g_i)$ which converges in $\mathcal{M}(n,d,v)$,   the injectivity radius is bounded from below and  $\dvol_{g_i}$ converges to $\dvol_g$ weakly. Therefore if $E_{g_i}(f_i)<C$, $C$ independent of $i$, then $E_g(f_i)$ is uniformly bounded. Adapting the proof of  Remark \ref{lin} for our case,  $f_i$ converges strongly in $\mathcal{H}^1$ to a map $f$.
Also $\hes(\pi_N)$  restricted to a neighborhood of $N$ is Lipschitz and  $\hes(\pi_N)\circ f_i$ converges to $\hes(\pi_N)\circ f$ in $\mathcal{H}^1$-norm  (see Lemma $6.4$ in Taylor's book \cite{T00}) and so therefore  $\Pi(f_i)(df_i,df_i)$ converges  weakly to $\Pi(f)(df,df)$. We have the same for $\Delta f_i$ and so $f$ is a weakly harmonic map.
\end{proof}
Under the assumptions of the above theorem one can show more and prove $f$ is stationary harmonic. Under stronger assumptions on $N$ or on the image of $f$, we can show that the limit map $f$ is strongly harmonic. These results are direct consequences of some of the theorems in \cite{S83}.
\begin{prop}\label{bndenrg1}
Let $(M_i,g_i)$ and $f_i$ be as in Proposition \ref{bndenrg}. Then the map $f$ is smooth harmonic, provided that $N$ is a compact Riemannian manifold and we have one of the following conditions:
\begin{enumerate}[i.]
\item $(N,h)$ is a non-positively curved Riemannian manifold.
\item There is no strictly convex bounded function on $f(M)$.
\end{enumerate}
\end{prop}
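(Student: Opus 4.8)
The plan is to apply the blow-up analysis recorded in Remark \ref{lin} and to upgrade the weak limit $f$ of Proposition \ref{bndenrg} to a smooth harmonic map under each of the two additional hypotheses. From the data inherited from Proposition \ref{bndenrg}---a non-collapsing sequence $(M_i,g_i)$ in $\mathcal{M}(n,D,v)$ with $E_{g_i}(f_i)<C$---the injectivity radii are bounded below and $\dvol_{g_i}\to\dvol_g$ weakly, so $E_g(f_i)$ is uniformly bounded; passing to a subsequence we obtain $f_i\rightharpoonup f$ in $\mathcal{H}^1(M,N)$ together with the Radon measures $|df_i|^2\,dx\rightharpoonup|df|^2\,dx+\nu$ and the set $\Sigma=\spt\nu\cup\sing f$ of Remark \ref{lin}. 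Recall that there, strong $\mathcal{H}^1$-convergence, the vanishing of $\nu$, $H^{n-2}(\Sigma)=0$, and the absence of a non-constant harmonic map $S^2\to N$ are all equivalent; the task is to show that $f$ is smooth, i.e. strongly harmonic.

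First I would treat case (a). A compact target of non-positive sectional curvature admits no non-constant harmonic two-sphere: for a harmonic map $\omega:S^2\to N$ the Eells--Sampson Bochner identity reads
\begin{align*}
\tfrac12\,\Delta|d\omega|^2=|\nabla d\omega|^2+\langle\ric^{S^2}d\omega,d\omega\rangle-\sum\langle\R^N(d\omega,d\omega)d\omega,d\omega\rangle,
\end{align*}
and since $\ric^{S^2}>0$ while the hypothesis $\sect_N\le0$ makes the final term on the right-hand side nonnegative, integrating over the closed surface $S^2$ (where the integral of the Laplacian vanishes) forces $d\omega\equiv0$. By the equivalences of Remark \ref{lin} this gives $\nu=0$ and $H^{n-2}(\Sigma)=0$, hence $f_i\to f$ strongly in $\mathcal{H}^1$ and, exactly as in Proposition \ref{bndenrg}, $f$ is weakly harmonic and smooth off a set of vanishing $(n-2)$-measure. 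It then remains to observe that a weakly harmonic map into a non-positively curved target is smooth---locally the image supports strictly convex distance functions, which yields interior regularity (cf. \cite{S83})---so $\sing f=\emptyset$ and $f$ is strongly harmonic.

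For case (b) the absence of harmonic spheres is no longer available, and I would instead invoke the regularity theory of Schoen directly, as the author indicates. By Theorem \ref{reg-sch} the limit $f$ is harmonic and smooth off a relatively closed set $\Sigma$ of locally finite $(n-2)$-dimensional Hausdorff measure, whose structure is governed by the homogeneous tangent maps obtained by blow-up at points of $\Sigma$. The finer results of \cite{S83} establish full regularity of such a weak limit under suitable geometric hypotheses on the image of the map; the assumption that $f(M)$ carries no bounded strictly convex function is precisely the hypothesis used there, under which $\Sigma=\emptyset$ and $\nu=0$. I would therefore record case (b) as an immediate consequence of those theorems rather than reprove them here.

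The hard part in both cases is the removal of the singular set, i.e. the passage from \emph{harmonic and smooth off a set of vanishing $(n-2)$-measure} to smoothness everywhere. In case (a) this is the classical interior regularity for maps into non-positively curved targets and is routine once the Bochner step has excluded bubbling. In case (b) it is exactly the content imported from \cite{S83}, and the delicate point is to match the convex-function hypothesis with the correct step of Schoen's blow-up and monotonicity argument; this is why I would present (b) as a citation rather than attempt an independent proof.
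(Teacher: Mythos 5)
Your proposal is correct in substance, but it is worth knowing that the paper's own proof consists entirely of two citations: for (a) it points to Proposition 2.1 in \cite{S83}, and for (b) to Corollary 2.4 in \cite{S83}, with the preceding sentence in the paper explicitly declining to restate those results. Your treatment of (b) therefore coincides with the paper's. For (a) you take a genuinely different, essentially self-contained route: the Eells--Sampson Bochner identity rules out non-constant harmonic two-spheres in a non-positively curved target, the chain of equivalences in Remark \ref{lin} then gives $\nu=0$, $H^{n-2}(\Sigma)=0$ and strong $\mathcal{H}^1$-convergence, and interior regularity of the limit finishes the argument. What this buys is transparency: it makes visible exactly where non-positive curvature enters (through the absence of bubbling), using only tools the paper has already assembled, whereas the paper imports Schoen's statement wholesale. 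One caveat on your final step: the claim that ``a weakly harmonic map into a non-positively curved target is smooth'' is stated too broadly --- weak harmonicity alone is not known to yield regularity in dimension $n\geq 3$, even for such targets; what is true, and what you actually need, is that your $f$ lies in the weak closure $\mathcal{F}_\Lambda$ of smooth harmonic maps (indeed it is a strong $\mathcal{H}^1$-limit, hence stationary), for which Theorem \ref{reg-sch} together with the blow-up analysis behind Remark \ref{lin} (or directly Proposition 2.1 of \cite{S83}) removes the singular set. Relatedly, $H^{n-2}(\Sigma)=0$ by itself does not give $\Sigma=\emptyset$, so the removal of singularities should be attributed to the $\epsilon$-regularity and weak-closure theory rather than to weak harmonicity; with the justification rephrased this way, your argument for (a) is sound, and arguably more informative than the paper's citation.
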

\begin{proof}
\begin{enumerate}[i.]
\item See Proposition 2.1 in \cite{S83}.
\item  See Corollary 2.4 in \cite{S83}.
\end{enumerate}
\end{proof}

\subsection{Case II: Collapsing to a manifold.}\label{caseii}
In this subsection we prove
 \begin{prop}\label{CaseII}
Let $(M_i,g_i)$ be a sequence of Riemannian manifolds in ${\mathcal{M}}(n,D)$ which converges to a Riemannian manifold $(M,g,\Phi \dvol_M)$ in the measured Gromov-Hausdorff topology with $C^{1,\alpha}$-pair  $(g,\Phi)$. Suppose $(N,h)$ is a compact Riemannian manifold. Let $f_i:(M_i,g_i)\rightarrow (N,h)$ be a sequence of smooth harmonic maps such that $\|e_{g_i}(f_i)\|_{L^{\infty}}<C$, where $C$ is a constant independent of $i$. Then $f_i$ has a subsequence  which converges to a map  $f:(M,g,\Phi\dvol_g)\rightarrow (N,h)$, and this map is a weakly harmonic map.
\end{prop}
Before we prove the proposition in general, we will prove the following proposition which has an additional regularity assumption. Then at the end of this subsection, we will apply this proposition to prove case II. Consider the following assumption,
\begin{asum}\label{assumption}
Let the Riemannian metric $g_i$ be regular on $M_i$, i.e. there exists a sequence $C=\{C_k\}$ of positive number $C_k$ independent of $i$, such that
\begin{eqnarray}\label{curv}
 |{\nabla_{g_i}^k}\R_{g_i}|<C_k.
\end{eqnarray}
Suppose also that  the Riemannian metric $g_i$ is an invariant metric with respect to the nil-structure.
\end{asum}
We have
\begin{prop}\label{jadid}
Let $(M_i,g_i)$ be a convergent sequence of Riemannian manifolds in ${\mathcal{M}}(n,D)$ (with respect to the measured Gromov-Hausdorff topology) such that $g_i$ satisfies the Assumption \ref{assumption}. Let $(M,g,\Phi)$ be the limit manifold. Suppose $(N,h)$ is a compact Riemannian manifold. Let $f_i:(M_i,g_i)\rightarrow (N,h)$ be a sequence of smooth harmonic maps such that $\|e_{g_i}(f_i)\|_{L^{\infty}}<C$, where $C$ is a constant independent of $i$. Then $f_i$ has a subsequence  which converges to a map  $f:(M,g,\Phi\dvol_M)\rightarrow (N,h)$ and this map is a smooth harmonic map.
\end{prop}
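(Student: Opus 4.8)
The plan is to exploit the collapsing structure. By the Fibration Theorem \ref{fbrbndl} there is a fibration $\psi_i:M_i\to M$ whose fibres are infranilmanifolds of diameter tending to $0$ and of uniformly bounded second fundamental form, and since $g_i$ is an invariant metric (Assumption \ref{assumption}) the estimate (\ref{cgf}) makes $\psi_i$ an (almost) Riemannian submersion onto the \emph{fixed}, non-collapsing base $M$. The three steps are: (i) show $f_i$ is asymptotically constant along fibres and descend it to a map $\bar f_i$ on $M$; (ii) use the Reduction Theorem \ref{reduction} to see that $\bar f_i$ satisfies the \emph{weighted} harmonic map equation on $(M,g,\Phi_i\dvol_g)$, the drift term coming from the mean curvature of the fibres; and (iii) pass to the limit on the base $M$ using Schauder estimates.

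First I would record the elementary bounds. The hypothesis $\|e_{g_i}(f_i)\|_{L^\infty}<C$ gives a uniform pointwise bound on $|df_i|$, so for two points $p,q$ in a common fibre, $|f_i(p)-f_i(q)|\le C\,d_{g_i}(p,q)\le C\,\diam(\psi_i^{-1}(\psi_i(p)))\to 0$. Thus $f_i$ is asymptotically fibre-constant and its vertical energy tends to $0$. Averaging $f_i$ over the fibres in the ambient $\mathbb R^q$ (legitimate since $f_i$ is nearly constant there, so the average stays in the tubular neighbourhood $U_\kappa(N)$) and projecting back by $\pi_N$ produces genuinely invariant maps $\tilde f_i$ with $\|\tilde f_i-f_i\|_{C^0}\to 0$; these descend to $\bar f_i:M\to N$. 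The horizontal energy density of $f_i$ descends to that of $\bar f_i$, so the $\bar f_i$ are uniformly Lipschitz and, by Lemma \ref{Rong}, a subsequence converges uniformly to a limit $f:M\to N$.

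Next I would establish harmonicity via the Reduction Theorem \ref{reduction}, applied with $E_1=(M_i,g_i)$ and submersion $\psi_i:M_i\to M$, the target $N$ given the trivial submersion so that $B_2=0$. For the invariant map $\tilde f_i$ the vertical tension $\tau(f^{\bot})$ vanishes, and the formula collapses to $\tau(\tilde f_i)=\tau^*(\bar f_i)-(\tilde f_i)_*(H_1)$, where $H_1$ is the mean curvature vector of the fibres. For a Riemannian submersion the horizontal mean curvature equals $-\nabla\ln\Phi_i$, with $\Phi_i$ the normalized fibre volume of the density-function subsection; since $f_i$ is harmonic this yields, up to the error controlled in the Appendix,
\begin{align*}
\Delta_g\bar f_i-\Pi(\bar f_i)(d\bar f_i,d\bar f_i)+d\bar f_i(\nabla\ln\Phi_i)=0,
\end{align*}
i.e. $\bar f_i$ is (approximately) harmonic on $(M,g,\Phi_i\dvol_g)$ in the sense of (\ref{EL}).

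Finally, since $M$ does not collapse, the coefficients of this weighted equation are uniformly $C^{0,\alpha}$ (the metric $g$ being $C^{1,\alpha}$ and $\Phi_i\to\Phi$ with uniform $C^{1,\alpha}$ bounds from the density-function analysis). Corollary \ref{ef} bounds the nonlinearity in $C^{0,\alpha}$, so the Schauder estimate (Theorem \ref{schauder}) gives uniform $C^{k,\alpha}$ bounds on $\bar f_i$; bootstrapping, $\bar f_i\to f$ in every $C^k$. Combined with the tension-field convergence established in the Appendix, the limit satisfies $\Delta_g f-\Pi(f)(df,df)+df(\nabla\ln\Phi)=0$, so $f$ is weakly harmonic on $(M,g,\Phi)$, hence smooth and harmonic by Theorem \ref{weak}. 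The main obstacle is the passage through steps (ii)–(iii): rigorously showing that the vertical tension and the deviation of $f_i$ from exact horizontality vanish in the collapsing limit, so that the Reduction formula passes cleanly to the drift term $df(\nabla\ln\Phi)$. This relies on the almost-flatness of the fibres, their bounded second fundamental form (Theorem \ref{fbrbndl}(iii)), and the invariant-metric estimate (\ref{cgf}), and is precisely the content deferred to the Appendix.
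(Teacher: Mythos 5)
Your strategy---descend $f_i$ to the base via the fibration, identify the fibre mean curvature with the drift term $\nabla\ln\Phi$, and pass to the limit in the tension-field equation---is essentially the strategy of the paper's \emph{Appendix}, not of its actual proof of Proposition \ref{jadid}, and as written it has a genuine gap. The paper proves Proposition \ref{jadid} through the \emph{weak} formulation: it builds $\tilde f_i=\sum\beta_j\,(i\circ f_i)\circ s_{i,j}$ from local sections and a partition of unity, compares only \emph{first} derivatives of $f_i$ and $\tilde f_i$ (Lemmas \ref{lemma1} and \ref{lemma2}, both resting on Lemma \ref{main1}), and lets the drift term appear automatically from the convergence of the pushforward measures ${\psi_i}_*\bigl(\dvol_{M_i}/\vol(M_i)\bigr)=\Phi_i\dvol^{g_i^M}\to\Phi\,\dvol_M$ in the integral identity $\int_{M_i}\Xi_{g_i}(\eta_i,f_i)\,\dvol_{M_i}/\vol(M_i)=0$. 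Your route instead works with $\tau(f_i)$ via the Reduction Theorem \ref{reduction}, which forces you to control second-order quantities: the vertical tension $\tau(f_i^{\bot})$, the defect of horizontality, and the discrepancy $\tau(\tilde f_i)-\tau(f_i)$ created by your fibre-averaging (note that $\tilde f_i$, unlike $f_i$, is \emph{not} harmonic, so the reduction formula applies exactly to neither map). You dispose of all of these by appealing to ``the error controlled in the Appendix.'' But the Appendix's convergence statements are proved under an \emph{additional} assumption---inequalities (\ref{alhar}) and (\ref{geo}), that the sections $s_{i,j}$ are almost harmonic and almost totally geodesic---which is not part of Assumption \ref{assumption}, is nowhere derived from the hypotheses of Proposition \ref{jadid}, and is precisely why the paper presents that computation as supplementary discussion rather than as the proof.

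The second, related gap is that you never establish uniform interior second-derivative estimates for $f_i$. The hypothesis $\|e_{g_i}(f_i)\|_{L^\infty}<C$ gives only a Lipschitz bound; that does suffice for your $C^0$ statements (near fibre-constancy, well-definedness of the average inside $U_\kappa(N)$, a uniform limit via Lemma \ref{Rong}), but every later step---``the horizontal energy density of $f_i$ descends to that of $\bar f_i$,'' the application of the reduction formula, the vanishing of the vertical tension, and the Schauder bootstrap on the base---needs an estimate of the form $\|h_{i,j}\|_{C^{2,\alpha}}\leq C\,(\|\Delta h_i\|_{L^{\infty}}+\|h_i\|_{L^{\infty}})$ with $C$ independent of $i$. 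This is exactly the content of the paper's Lemma \ref{main1}: it is obtained by pulling back to exponential charts $B(p_{i,j})$, where the injectivity radius is uniformly bounded below so the Schauder constants survive the collapse, and by combining Theorem \ref{schauder} with the structure equation (\ref{EL}) and Corollary \ref{ef}. A Schauder estimate taken directly on the collapsing manifolds $(M_i,g_i)$ has constants blowing up with $i$, so without Lemma \ref{main1} (or a substitute you would have to prove) your limit passage cannot be justified. Two smaller inaccuracies: the identity ``horizontal mean curvature $=-\nabla\ln\Phi_i$'' holds only after averaging over the fibre---the paper's Sublemma \ref{two} accordingly states it as a weak limit---and the claimed convergence $\bar f_i\to f$ in every $C^k$ overshoots what the $C^{1,\alpha}$ convergence of $g_i^M$ to $g$ can deliver.
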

Before we prove the Proposition \ref{jadid}, we first recall a few remarks from \cite{F88,F89}. Then we prove Lemma \ref{main1} which is the main element in the proof of Proposition \ref{jadid}.
\begin{remark}\normalfont\label{goosaale}
In \cite{F89} Fukaya proves that with the extra regularity assumption  (\ref{curv}) on $g_i$, $(M_i,g_i,\tfrac{\dvol_{M_i}}{\vol(M_i)})$ converges to a smooth Riemannian manifold, with the smooth pair $(g,\Phi)$. See Lemma 2.1 in \cite{F89}. By Theorem \ref{fbrbndl}, we know that for $i$ large enough,  there is a fibration map $\psi_i:M_i\rightarrow M$.  Since $g_i$ is an invariant metric, there exist  metrics $g_i^M$ on $M$ such that the maps $\psi_i:(M_i,g_i)\rightarrow (M,g_i^M)$   are Riemannian submersions and  $g_i^M$ converges to $g$ as in Theorem \ref{CGP}.
\end{remark}

\begin{remark}[Fukaya \cite{F88,F89}]\label{fukk}
\normalfont Take an arbitrary point $p_0$ in $M$ and choose $p_i\in\psi_i^{-1}(p_0)$. By  $|\sect_{g_i}|\leq 1$, at point $p_i$ on $M_i$ the conjugate radius\footnote{The conjugate domain at a point $p$ in a Riemannian manifold $M$  is the largest star shaped domain in which $d\exp_p$ is non-singular and the conjugate radius is the radius of the largest ball in the conjugate domain at $p$.} is greater than some constant name it $\rho$. We name the pullback of the Riemannian metric $g_i$ by the exponential map, $\exp_{p_i}$ at $p_i$, $\tilde{g}_i$. Therefore the injectivity radius at $0$ is at least the conjugate radius at $p_i$ (see Corollary 2.2.3 in \cite{R10}).

Consider the ball $B=B(0,\rho)$ in $T_{p_i}M_i$ with the metric $\tilde{g_i}$. By virtue of the regularity assumption on $g_i$, $\tilde{g_i}$ will converge to some $g_0$ in the $C^{\infty}$-topology. There are local groups $G_i$ converging to a Lie group germ $G$ such that
\begin{enumerate}[1.]
 \item $G_i$ act by isometries  on  the pointed metric spaces $((B,\tilde{g_i}),0)$.

 \item $((B,\tilde{g_i}),0)/{G_i}$ is isometric to a neighborhood of $p_i$ in $M_i$.

 \item $G$ acts by isometries on the pointed metric space $((B,g_0),0)$.

 \item  $((B,g_0),0)/{G}$ is isometric to a neighborhood of $p_0$ in $M$ and the action of G is free.
\end{enumerate}
It follows that there is a neighborhood $U$ of $p_0$ in $M$ and a $C^{\infty}$ map $s:U\rightarrow B$ such that
\begin{enumerate}[i.]
\item $s(p_0)=0$.

\item  $P\circ s=Id$, where $P$ denotes the composition of the projection map and the above mentioned isometry in $4$.

\item $d_{(B,g_0)}(s(q),0)=d_M(q,p_0)$ holds for $q\in M$.
\end{enumerate}
Therefore there is some  constant, which we again name $\rho$, independent of $i$ such that, $M=\bigcup_{j=1}^{m}B_{\tfrac{\rho}{2}}(x_j,M)$ and $B_{\tfrac{\rho}{2}}(x_j,M)$ satisfies the  preceding conditions and we can construct a smooth section  $s_{i,j}:B_{\tfrac{\rho}{2}}(x_j,M)\rightarrow M_i$ of $\psi_i$, such that
\begin{eqnarray}\label{sec}
\frac{|(s_{i,j})_*(v)|}{|v|}< C
\end{eqnarray}
for each $v\in T B_{\tfrac{\rho}{2}}(x_j,M)$. Here $C$ is a constant independent of $i$. Hereafter we let $p_{i,j}=\psi^{-1}_i(x_j)$ and by $B(p_{i,j})$ we mean a ball centered at $p_{i,j}$ with radius $\rho$ in $T_{p_{i,j}}M_i$. See section $3$ in \cite{F88} and section $2$ in \cite{F89}. \\
\end{remark}
 Now we  show that $f_i$s are almost constant on the fibers of $M_i$. The following lemma is similar to Lemma $4.3$ in \cite{Fuk87}.  In the following lemma
 $(M_i,g_i)$ is a convergent sequence in $\cal{M}(n,D)$ such that $g_i$ satisfies only  (\ref{curv}) and $N$ is a compact Riemannian manifold.
\begin{lemme}\label{main1}
 Let $h_i:M_i\rightarrow I(N)\subset \mathbb R^q$ be  smooth maps which satisfy the Euler-Lagrange equation (\ref{EL}). Suppose $v_i\in T_p(M_i)$ satisfies $(\psi_i)_*(v_i)=0$,where $\psi_i$ is the fibration map and $v'_i,v''_i\in T_p(M_i)$ ($p\in B_{2\rho/3}(p_{i,j},M_i)$). Then we have
\begin{eqnarray}
|v_i\cdot h_i| &\leq &C_1\cdot \epsilon'_i \cdot|v_i|\cdot (\|\Delta h_i\|_{L^{\infty}}+\|h_i\|_{L^{\infty}}),\label{cons}\\
 |v'_i\cdot v''_i\cdot h_i| &\leq& C_2\cdot|v'_i|\cdot|v''_i|\cdot(\|\Delta h_i\|_{L^{\infty}}+\|h_i\|_{L^{\infty}}),\label{cons1}
\end{eqnarray}
where $C_1$ and $C_2$ are some constants independent of $i$ and $\epsilon'_i$ is a sequence converging to zero. Also $v_i \cdot h_i = dh_i(v_i)$ denotes the derivative of $h_i$ in the direction of $v_i$.
\end{lemme}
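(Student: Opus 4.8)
The plan is to pass to the local covering picture of Remark \ref{fukk}, establish uniform Schauder bounds there, and then exploit the group invariance of the lifted maps to control the fiber derivatives. Fix $j$ and work on the ball $B=B(p_{i,j})\subset T_{p_{i,j}}M_i$ equipped with the pulled-back metric $\tilde g_i=\exp_{p_{i,j}}^*g_i$. By Assumption \ref{assumption} and Remark \ref{fukk}, $\tilde g_i$ converges in the $C^{\infty}$-topology to a smooth metric $g_0$, the projection $\pi_i\colon(B,\tilde g_i)\to M_i$ is a Riemannian covering onto a neighborhood containing $B_{2\rho/3}(p_{i,j},M_i)$ with local deck group $G_i$, and $G_i$ converges to a Lie group $G$ acting by $g_0$-isometries with $B/G$ a neighborhood in $M$. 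Set $\tilde h_i=h_i\circ\pi_i$. This map is $G_i$-invariant, and since $\pi_i$ is a local isometry it satisfies $\Delta_{\tilde g_i}\tilde h_i=\tilde f_i$ with $\tilde f_i=(\Delta h_i)\circ\pi_i$ and $\|\tilde f_i\|_{L^{\infty}}=\|\Delta h_i\|_{L^{\infty}}$.

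Next I would apply the Schauder estimate. The coefficients of $\Delta_{\tilde g_i}$ are uniformly controlled, because the $\tilde g_i$ converge in $C^{\infty}$ and are uniformly elliptic on a fixed subball, so Theorem \ref{schauder} gives $\|\tilde h_i\|_{C^{1,\alpha}(B')}\le C(\|\Delta h_i\|_{L^{\infty}}+\|h_i\|_{L^{\infty}})$ on a slightly smaller ball $B'$, with $C$ independent of $i$. Since $h_i$ satisfies the harmonic-map equation (\ref{EL}), the lifted right-hand side $\Delta_{\tilde g_i}\tilde h_i=\Pi(\tilde h_i)(d\tilde h_i,d\tilde h_i)-d\tilde h_i(\nabla\ln\Phi)$ lies in $C^{0,\alpha}$; by Corollary \ref{ef} its $C^{0,\alpha}$-norm is controlled by $\|d\tilde h_i\|_{L^{\infty}}$ and $[d\tilde h_i]_{\alpha}$, hence by the $C^{1,\alpha}$-bound just obtained. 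A second application of Theorem \ref{schauder} then yields $\|\tilde h_i\|_{C^{2,\alpha}(B'')}\le C(\|\Delta h_i\|_{L^{\infty}}+\|h_i\|_{L^{\infty}})$. Transporting back by the local isometry (which preserves the lengths of $v_i'$ and $v_i''$) and bounding second derivatives of $\tilde h_i$ by this norm gives inequality (\ref{cons1}).

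For inequality (\ref{cons}) I would use the $G$-invariance of the limit. A vertical vector $v_i$, i.e. one with $(\psi_i)_*v_i=0$, lifts to a vector tangent to a $G$-orbit, so it is a combination $\sum_s a^{(s)}X^{(s)}_{\hat p}$ of the Killing fields $X^{(1)},\dots,X^{(k)}$ of $g_0$ generating $G$, with $\sum_s|a^{(s)}|\le C|v_i|$. Since $\tilde h_i$ is $G_i$-invariant and the $G_i$-orbit is $\epsilon_i'$-dense in the ambient $G$-orbit (the mesh $\epsilon_i'\to0$ as $G_i\to G$), the oscillation of $\tilde h_i$ along each $G$-orbit is at most $\|d\tilde h_i\|_{L^{\infty}}\,\epsilon_i'\le C\epsilon_i'(\|\Delta h_i\|_{L^{\infty}}+\|h_i\|_{L^{\infty}})$. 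Restricting $\tilde h_i$ to a (closed) $G$-orbit and interpolating between this small oscillation and the uniform bound on its second derivative along the orbit bounds $X^{(s)}\cdot\tilde h_i$; after renaming $\epsilon_i'$ by the corresponding sequence, still tending to zero, one obtains $|v_i\cdot h_i|=|\sum_s a^{(s)}\,X^{(s)}\cdot\tilde h_i|\le C_1\epsilon_i'|v_i|(\|\Delta h_i\|_{L^{\infty}}+\|h_i\|_{L^{\infty}})$, which is (\ref{cons}). Conceptually this is transparent: after normalizing the right-hand side to $1$ and passing to a subsequence, the $C^{2}$-bounded maps $\tilde h_i$ converge to a limit invariant under $G=\lim G_i$, whose orbit-derivatives $X^{(s)}\cdot\tilde h_{\infty}$ vanish, and the estimate is the effective form of this.

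The main obstacle is precisely this last step: converting the qualitative fact that the limit is $G$-invariant into a quantitative estimate with an explicit $\epsilon_i'$. The difficulty is that $G_i$-invariance forces $\tilde h_i$ to be constant only along the discrete, shrinking $G_i$-orbits, not along the full $G$-orbits, so the density of $G_i$ in $G$ directly controls the oscillation but not the derivative. The uniform second-derivative bound from the Schauder step is exactly what converts oscillation control into derivative control, and the delicate point is tracking the correct power of the collapsing scale through this interpolation so that the resulting sequence $\epsilon_i'$ genuinely tends to zero.
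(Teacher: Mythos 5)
Your treatment of inequality (\ref{cons1}) coincides with the paper's: pull back by $\exp_{p_{i,j}}$ to the ball $B(p_{i,j})$, where by Assumption \ref{assumption} the metrics $\tilde g_{i,j}$ converge in $C^{\infty}$, apply the Schauder estimate (Theorem \ref{schauder}) to get a uniform $C^{1,\alpha}$ bound, feed this back into the Euler--Lagrange equation (\ref{EL}) via Corollary \ref{ef} to bound the right-hand side in $C^{0,\alpha}$, and apply Schauder once more for the $C^{2,\alpha}$ bound; you also share the paper's implicit use of an a priori bound on $\Lambda_i:=\|\Delta h_i\|_{L^{\infty}}+\|h_i\|_{L^{\infty}}$ to treat the quadratic term $\Pi(h)(dh,dh)$ as linear in $\Lambda_i$. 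For inequality (\ref{cons}), however, your route is genuinely different in its packaging. The paper argues by contradiction: if $|v_i\cdot h_{i,j}|\ge A\,\Lambda_i$ along a subsequence, it integrates along a geodesic $\sigma^i(t)$ of the fiber $F_i$, using the $C^{2,\alpha}$ bound to keep the derivative bounded below for a definite parameter time $\delta'$, and so produces two points $a,b\in B(p_{i,j})$ with $|h_{i,j}(a)-h_{i,j}(b)|\ge\beta\delta'\Lambda_i$ which are preimages of points of $M_i$ at distance at most $\diam F_i\le\epsilon_i$; since $h_{i,j}$ takes equal values on preimages of one point, this forces a gradient of order $\delta'/\epsilon_i\to\infty$, contradicting (\ref{sch2}). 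You argue directly: $G_i$-invariance of the lift plus $\epsilon_i'$-density of $G_i$-orbits in $G$-orbits gives oscillation $O(\epsilon_i'\Lambda_i)$ along a $G$-orbit, and a Landau-type interpolation against the $C^{2}$ bound converts oscillation control into the derivative bound $O(\sqrt{\epsilon_i'}\,\Lambda_i)$. The two mechanisms are contrapositives of one another (small oscillation plus bounded second derivative yields small first derivative), but each buys something: your direct argument produces an explicit rate $\sqrt{\epsilon_i'}$, whereas the paper's contradiction only yields existence of some null sequence; conversely, the paper's argument needs less structure, since it uses only the fibration $\psi_i$ itself, the diameter bound on its fibers, and the covering identification, with no reference to the limit group.

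The one step you should not pass over is the identification of the vertical space $\ker(\psi_i)_*$ with the tangent space of a $G$-orbit after lifting. Fukaya's fibration is produced by an averaging construction, and its fibers are only $C^1$-close to the projections of the $G$-orbits, not equal to them; the paper sidesteps this entirely by flowing along a geodesic of the actual fiber $F_i$. Your argument survives, because one can write $v_i=w_i+z_i$ with $w_i$ orbit-tangent and $|z_i|\le\delta_i|v_i|$, $\delta_i\to 0$, and the error contributes at most $|z_i|\,\|d\tilde h_i\|_{L^{\infty}}\le C\delta_i\Lambda_i|v_i|$, which is absorbed into the null sequence; but this decomposition, and the $C^1$-closeness of fibers to orbits on which it rests, is an additional ingredient from the local structure of Remark \ref{fukk} that your proof uses silently and should state and justify. (A minor remark: on $M_i$ the reference measure is the Riemannian one, so the term $d\tilde h_i(\nabla\ln\Phi)$ you carry in the lifted equation is absent, or must itself be assumed uniformly $C^{0,\alpha}$-bounded.)
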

\begin{proof}
 We put $\Phi_{i,j}=\exp_{p_{i,j}}:{B(p_{i,j})}\rightarrow M_i$, $\tilde{g}_{i,j}={\Phi_{i,j}}_*(g_i)$ and $a=\Phi_{i,j}^{-1}(p)$. We also denote $h_i\circ\Phi_{i,j}$ by $h_{i,j}$.

  From the  Schauder estimates for elliptic equations (see Theorem \ref{schauder}) we have
 \begin{eqnarray}\label{sch}
 \|h_{i,j}\|_{C^{1,\alpha}}\leq C'\cdot(\|\Delta h_{i,j}\|_{L^{\infty}}+\|h_{i,j}\|_{L^{\infty}}),
 \end{eqnarray}
and hence
 \begin{eqnarray}\label{sch2}
 \|v'_i\cdot h_{i,j}\|_{C^{\alpha}}\leq C'\cdot(\|\Delta h_i\|_{L^{\infty}}+\|h_i\|_{L^{\infty}}),
 \end{eqnarray}
where $C'$ depends on the metric $\tilde{g}_{i,j}$.  Since $\Phi_{i,j}$ is an isometry, by the composition formula (see formula 1.4.1 in \cite{X96}), we have $\Delta h_{i,j}(x)=\Delta h_i(\Phi_{i,j}(x))$. Also from (\ref{sch}), and the fact that $\tilde{g}_{i,j}$ converges in $C^{\infty}$
  \begin{align*}
 \|\Pi(h_{i,j})(dh_{i,j},dh_{i,j})\|_{C^{\alpha}}\leq C''\cdot(\|\Delta h_i\|_{L^{\infty}}+\|h_i\|_{L^{\infty}}),
 \end{align*}
where $C''$ is a constant independent of $i$. By  equation (\ref{EL}), we have
\begin{align*}
\|\Delta h_{i,j}\|_{C^{\alpha}}\leq C''\cdot(\|\Delta h_i\|_{L^{\infty}}+\|h_i\|_{L^{\infty}}).
\end{align*}
Using  Schauder estimates for second derivative, we have
\begin{eqnarray}\label{sch1}
\|h_{i,j}\|_{C^{2,\alpha}}\leq C\cdot(\|\Delta h_i\|_{L^{\infty}}+\|h_i\|_{L^{\infty}}),
\end{eqnarray}
for some  $C$ independent of $i$ and  (\ref{cons1}) follows.

Now we prove (\ref{cons}) by contradiction.  Assume $|v_i|=1$. Let $\sigma^i(t)=\exp^{F_i}_p(tv_i)$ be a geodesic in the fiber containing $p$, $F_i\subset M_i$  such that $\frac{d}{dt}|_{t=0}\sigma^i(t)=v_i$. For $0\leq t\leq \tfrac{\rho}{5}$ this curve has a lift $l^i(t)\subset B(p_{i,j})$ such that $\Phi_{i,j}(l^i(t))=\sigma^i(t)$. We have
\begin{align*}
d(\sigma_i(t),p)\leq \diam(F_i)\leq\epsilon_i.
\end{align*}
  
  By contradiction we assume that there is subsequence of $h_i$ and a positive number $A$ such that
\begin{align*}
 |v_i\cdot h_{i,j}|> A \cdot (\|\Delta h_i\|_{L^{\infty}}+\|h_i\|_{L^{\infty}}).
\end{align*}
We know that
\begin{align*}
v_i\cdot h_{i}=v_i\cdot h_{i,j}= \left. \frac{d}{dt} \right|_{t = 0}h_{i,j}\circ l^i(t).
\end{align*}
 There exist $\beta>0$ and $\delta>0$ independent of $i$ such that for any $t<\delta$, we have
\begin{eqnarray}\label{proof}
|h_{i,j}\circ l^i(t)-h_{i,j}(a)|>\beta \cdot t\cdot (\|\Delta h_i\|_{L^{\infty}}+\|h_i\|_{L^{\infty}}).
\end{eqnarray}
To explain this, let $h_{i,j}\circ l^i(t)=q_{i,j}(t)$. We know from (\ref{sch1}) that
\begin{align*}
| \left. \frac{d}{dt} \right|_{t = 0} q'_{i,j}(t)|\leq C(\|\Delta h_i\|_{L^{\infty}}+\|h_i\|_{L^{\infty}}),
\end{align*}
so for some fixed $\delta$ and $0<t<\delta$ we have
\begin{align*}
|q'_{i,j}(t)-q'_{i,j}(0)|\leq C'\cdot t\cdot(\|\Delta h_i\|_{L^{\infty}}+\|h_i\|_{L^{\infty}}).
\end{align*}
On the other hand we have
\begin{align*}
 |q'_{i,j}(0)|> A \cdot (\|\Delta h_i\|_{L^{\infty}}+\|h_i\|_{L^{\infty}}),
\end{align*}
so for $\delta$ small enough  and $t<\delta$ we have
\begin{align*}
|q'_{i,j}(t)|> \beta \cdot (\|\Delta h_i\|_{L^{\infty}}+\|h_i\|_{L^{\infty}}).
\end{align*}
Therefore
\begin{align*}
|q_{i,j}(t)-q_{i,j}(0)|=|q'_{i,j}(\theta_i)\cdot t|>\beta \cdot t\cdot (\|\Delta h_i\|_{L^{\infty}}+\|h_i\|_{L^{\infty}}),
\end{align*}
from which (\ref{proof}) follows.\\

There exists $b\in B(p_{i,j})$, such that $d(a,b)<\epsilon _i$ and $\Phi_{i,j}(l_i(\delta'))=b$. For a fixed  $\delta'<\delta$ we have
\begin{align*}
|h_{i,j}(b)-h_{i,j}(a)|>\beta \cdot \delta'\cdot (\|\Delta h_i\|_{L^{\infty}}+\|h_i\|_{L^{\infty}}).
\end{align*}
 If we fix $\{\xi_k\}^{k=n}_{k=0}$ as a coordinate system at the point $a\in B(p_{i,j})$, for some $b'\in B(p_{i,j})$ we  have
\begin{eqnarray*}
\sum^{k=n}_{k=0}\frac{\partial h_{i,j}}{\partial \xi^k }> C\cdot \beta\cdot \tfrac{\delta'}{\epsilon_i}\cdot (\|\Delta h_i\|_{L^{\infty}}+\|h_i\|_{L^{\infty}}),
\end{eqnarray*}
and this contradicts (\ref{sch2}).
\end{proof}
Now we prove Proposition \ref{jadid}.
\begin{proof}[Proof of Proposition \ref{jadid}]
As we assumed  $\|e(f_i)\|_{L^{\infty}}<c$ and by the Euler-Lagrange equation and Corollary \ref{ef}, we have that $\|\Delta I\circ f_i\|_{L^{\infty}}$ is uniformly bounded. Moreover, $\| I\circ f_i\|_{L^{\infty}}$ is uniformly bounded. Using  (\ref{cons}), the maps $f_i$s are equicontinuous. By Lemma \ref{Rong}, there is a limit map $f:M\rightarrow N$ which is continuous.

We consider the following maps on $M$,
 \begin{eqnarray}\label{tilde}
 \tilde{f_i}=\sum \beta_j\cdot (I\circ f_i)\circ s_{i,j},
 \end{eqnarray}
 where $\beta_j$ is an arbitrary $C^{\infty}$ partition of unity associated to $B_{\tfrac{\rho}{2}}(x_j,M)$,  $s_{i,j}$ is the section associated to $\psi_i$ as mentioned in Remark \ref{fukk}.  Along a subsequence, which we again denote by $f_i$, we have
\begin{align*}
\lim_{i\rightarrow \infty}f_i(s_{i,j}(x))=f(x)\quad\quad \text{for}~x\in B_{\tfrac{\rho}{2}}(x_j,M),
\end{align*}
 and also
\begin{align*}
\lim_{i\rightarrow \infty}\tilde{f_i}(x)=I\circ f(x)\quad\quad \text{for}~x\in B_{\tfrac{\rho}{2}}(x_j,M).
\end{align*}

Since the energy density of $f_i$ is bounded and also $s_{i,j}$ satisfies  (\ref{sec}), we have  $\|e(\tilde{f_i})\|_{L^{\infty}}$ is uniformly bounded.  By the same argument as above,  $\|\tilde{f_i}\|_{C^{1}}$ is bounded and $\tilde{f_i}$ converge uniformly to $I\circ f$.  Moreover $\psi_i$ has bounded second fundamental form (see Theorem 2.6 in \cite{CGF92}) and the same is true for $s_{i,j}$. So $\tilde{f_i}$ has bounded $C^2$-norm and there is a subsequence of $\tilde{f_i}$ which converges to $I\circ f$ in the $C^1$-topology.

Choose a local orthonormal frame $\{\bar{e}_{k}\}_{k=1}^{m}$ on $(M,g_i^{M})$. Denote  its horizontal lift on $(M_i,g_i)$ by $\{e_{k}\}_{k=1}^{m}$. Suppose $\{e_{t}\}_{t=m+1}^n$ is a local orthonormal frame field of the fiber  $F_i$ in $M_i$ such that $\{e_{k},e_{t}\}$ form a local orthonormal frame field in $M_i$  (note that we omit  the index $i$ for the orthonormal frame fields on $(M_i,g_i)$ and $(M,g_i^{M})$).
Our aim is to show that $f$ is also weakly harmonic.
\begin{lemme}\label{lemma1}
We have
\begin{align*}
\lim_{i\rightarrow \infty}|\langle dI\circ f_i,d\eta_i\rangle(p)-\langle d\tilde{f_i},d{\eta}\rangle(\psi_i(p))|=0,
\end{align*}
where $\eta:M\rightarrow \mathbb R^q$, is a $C^{\infty}$-map  $\eta_i=\eta\circ \psi_i$, and $p$ in $M_i$.
\end{lemme}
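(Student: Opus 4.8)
The plan is to expand both inner products in the adapted orthonormal frame $\{e_k,e_t\}$ and to exploit that $\eta_i=\tilde\eta\circ\psi_i$ is constant along the fibres of $\psi_i$. Fix $p\in M_i$ and set $q=\psi_i(p)$. Since $\psi_i\colon(M_i,g_i)\to(M,g_i^{M})$ is a Riemannian submersion, the vertical frame vectors satisfy $(\psi_i)_*(e_t)=0$, so $d\eta_i(e_t)=d\tilde\eta((\psi_i)_*e_t)=0$, whereas on the horizontal lift $d\eta_i(e_k)=d\tilde\eta(\bar e_k)$. Expanding the $g_i$-inner product in the frame $\{e_k,e_t\}$, the vertical terms drop out and
\begin{align*}
\langle di\circ f_i,d\eta_i\rangle(p)=\sum_{k=1}^{m}\big\langle d(i\circ f_i)(e_k)|_p,\ d\tilde\eta(\bar e_k)|_q\big\rangle,
\end{align*}
while on the base $\langle d\tilde f_i,d\tilde\eta\rangle(q)=\sum_{k=1}^m\langle d\tilde f_i(\bar e_k)|_q,\ d\tilde\eta(\bar e_k)|_q\rangle$. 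As $|d\tilde\eta|$ is bounded uniformly ($\tilde\eta$ is fixed and $\{\bar e_k\}$ is $g_i^{M}$-orthonormal with $g_i^{M}\to g$), it suffices to prove that, for each $k$,
\begin{align*}
\big| d(i\circ f_i)(e_k)|_p-d\tilde f_i(\bar e_k)|_q\big|\longrightarrow 0
\end{align*}
uniformly in $p$.

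First I would differentiate $\tilde f_i=\sum_j\beta_j\cdot(i\circ f_i)\circ s_{i,j}$ by the Leibniz rule, splitting $d\tilde f_i(\bar e_k)|_q$ into a partition term $\sum_j d\beta_j(\bar e_k)\cdot(i\circ f_i)(s_{i,j}(q))$ and a main term $\sum_j\beta_j(q)\,d(i\circ f_i)\big(ds_{i,j}(\bar e_k)\big)\big|_{s_{i,j}(q)}$. For the partition term I use $\sum_j\beta_j\equiv1$, hence $\sum_j d\beta_j\equiv0$, to replace $(i\circ f_i)(s_{i,j}(q))$ by $(i\circ f_i)(s_{i,j}(q))-(i\circ f_i)(p)$ without changing the sum. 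Every $s_{i,j}(q)$ and $p$ lie in the single fibre $\psi_i^{-1}(q)$, of diameter at most $\diam(F_i)\le\epsilon_i$; integrating the vertical-derivative bound (\ref{cons}) of Lemma \ref{main1} along a fibre path (here $\|\Delta f_i\|_{L^{\infty}}$ and $\|f_i\|_{L^{\infty}}$ are uniformly bounded, as established above) gives $|(i\circ f_i)(s_{i,j}(q))-(i\circ f_i)(p)|=O(\epsilon_i\epsilon'_i)$, so the partition term is $o(1)$ because $d\beta_j$ is fixed.

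For the main term the key observation is that $\psi_i\circ s_{i,j}=\Id$ forces $(\psi_i)_*\big(ds_{i,j}(\bar e_k)\big)=\bar e_k$, so at $s_{i,j}(q)$ one may write $ds_{i,j}(\bar e_k)=e_k+w_{i,j}$ with $w_{i,j}$ vertical and $|w_{i,j}|$ bounded by the dilation estimate (\ref{sec}). Splitting $d(i\circ f_i)\big(ds_{i,j}(\bar e_k)\big)=d(i\circ f_i)(e_k)+d(i\circ f_i)(w_{i,j})$, the vertical piece is $O(\epsilon'_i)$ by (\ref{cons}). The surviving horizontal piece $d(i\circ f_i)(e_k)$ is evaluated at $s_{i,j}(q)$, but I need it at $p$; regarding $d(i\circ f_i)(e_k)$ as an $\mathbb R^q$-valued function and differentiating it along a fibre path from $p$ to $s_{i,j}(q)$ produces mixed second derivatives of $i\circ f_i$ (together with bounded connection terms, since $\tilde g_{i,j}$ converges in $C^{\infty}$), which are controlled by (\ref{cons1}); as the path has length $\le\epsilon_i$, this gives $d(i\circ f_i)(e_k)|_{s_{i,j}(q)}=d(i\circ f_i)(e_k)|_p+O(\epsilon_i)$. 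Inserting these into the main term and using $\sum_j\beta_j(q)=1$ yields $d\tilde f_i(\bar e_k)|_q=d(i\circ f_i)(e_k)|_p+o(1)$, which is exactly the required estimate.

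The main obstacle is controlling the two vertical errors that separate $\tilde f_i$ from a genuine horizontal lift of $f_i$: the tilt $w_{i,j}$ of the non-horizontal sections into the fibre, and the variation of the horizontal derivative of $f_i$ as one moves across the shrinking fibre. Both are handled by Lemma \ref{main1}: estimate (\ref{cons}) forces the first-order vertical derivatives of $f_i$ to vanish asymptotically, while (\ref{cons1}) bounds the second derivatives so that transporting $d(i\circ f_i)(e_k)$ across a fibre of diameter $\le\epsilon_i$ costs only $O(\epsilon_i)$. Throughout one must check that every constant is independent of $i$ and every estimate uniform in $p$, which holds because the constants in Lemma \ref{main1}, the bound (\ref{sec}), and the uniform bounds on $\|\Delta f_i\|_{L^{\infty}}$ and $\|f_i\|_{L^{\infty}}$ are all independent of $i$.
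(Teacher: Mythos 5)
Your proof is correct and follows essentially the same route as the paper's: expand in the adapted frame $\{e_k,e_t\}$, use (\ref{cons}) to kill vertical contributions and the tilt $ds_{i,j}(\bar e_k)-e_k$ (which is vertical and bounded via (\ref{sec}) and $\psi_i\circ s_{i,j}=\Id$), use (\ref{cons1}) together with $\diam(F_i)\le\epsilon_i$ to transport $d(i\circ f_i)(e_k)$ across the fibre, and dispose of the $\sum_j d\beta_j$ term using $\sum_j\beta_j\equiv 1$. The only cosmetic differences are that you observe the vertical inner-product terms vanish exactly (since $d\eta_i(e_t)=0$) where the paper bounds them by $C\cdot\epsilon_i$, and that you handle the partition term by recentring plus the fibre-oscillation estimate from (\ref{cons}), where the paper instead invokes the uniform convergence of $f_i\circ s_{i,j}$ to $f$; both follow from the same ingredients.
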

\begin{proof}
By inequality (\ref{cons}),
\begin{align*}
|\langle dI\circ f_i,d\eta_i\rangle(p)-\sum_{k=1}^m\langle di\circ f_i(e_k),d\eta_i(e_k)\rangle(p)|\leq C_1\cdot\epsilon'_i
\end{align*}
for $i$ large enough where $C_1$ is a constant independent of $i$. Let $F_i$ denote the fiber containing $p$ and choose a point $q$ in $F_i$. By (\ref{cons1}), and  since $\diam(F_i)\leq \epsilon_i$
\begin{align*}
| dI\circ f_i(e_k)(p)- dI\circ f_i(e_k)(q)|\leq C_2\cdot \epsilon_i,
\end{align*}
 and so
 \begin{align*}
| dI\circ f_i(e_k)(p)- dI\circ f_i(e_k)(s_{i,j}\circ \psi_i(p))|\leq C_2\cdot \epsilon_i.
\end{align*}
Because $\psi_i\circ s_{i,j}=\Id$,  for $x\in M$ we have 
\begin{align*}
{\psi_i}_*\left(e_k(s_{i,j}(x))-{s_{i,j}}_*(\bar{e}_k(x))\right)=0.
\end{align*}
By inequality (\ref{sec}), we have
\begin{align*}
|e_k(s_{i,j}(x))-{s_{i,j}}_*(\bar{e}_k(x))|\leq C_3,
\end{align*}
for some constant $C_3$ and therefore by (\ref{cons}),
\begin{align*}
|dI\circ f_i(e_k)(p)- d(I\circ f_i)\circ{ s_{i,j}}_*(\bar{e}_k)(\psi_i(p))|\leq C_4\cdot \epsilon_i.
\end{align*}
From the convergence of  $f_i\circ s_{i,j}$  to $f$, we have
\begin{align*}
\lim_{i\rightarrow\infty}|\sum d\beta_j \cdot (I\circ f_i)\circ s_{i,j}-\sum d\beta_j\cdot (I\circ f)|=0,
\end{align*}
 So
\begin{align*}
\lim_{i\rightarrow \infty}|d\tilde{f_i}-\sum \beta_j \cdot d((I\circ f_i)\circ s_{i,j})|=0.
\end{align*}
 Since $\sum_j\beta_j=1$  we finally have
\begin{align*}
\lim_{i\rightarrow}|\langle dI\circ f_i,d\eta_i\rangle(p)-\langle d\tilde{f_i},d{\eta}\rangle(\psi_i(p))|=0.
\end{align*}
\end{proof}
\begin{lemme}\label{lemma2}
We have
\begin{align*}
\lim_{\i\rightarrow\infty}\left|\Pi(f_i)(p)(dI\circ f_i,dI\circ f_i)-\Pi(\tilde{f_i})(\psi_i(p))(d\tilde{f_i},d\tilde{f_i})\right|=0.
\end{align*}
\end{lemme}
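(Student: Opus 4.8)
The plan is to expand both second-fundamental-form terms as traces over orthonormal frames and compare them factor by factor, exploiting that the vertical (fibre) contributions are negligible by Lemma \ref{main1} while the horizontal ones already match those of $\tilde f_i$ through the estimates of Lemma \ref{lemma1}. Recall from (\ref{scndfnd}) that, tracing with respect to the orthonormal frame $\{e_k,e_t\}$ on $(M_i,g_i)$ introduced above,
\begin{align*}
\Pi(f_i)(p)(di\circ f_i,di\circ f_i)={}&\sum_{k=1}^m\hes(\pi_N)(f_i(p))\big(di\circ f_i(e_k),di\circ f_i(e_k)\big)\\
&+\sum_{t=m+1}^n\hes(\pi_N)(f_i(p))\big(di\circ f_i(e_t),di\circ f_i(e_t)\big),
\end{align*}
while, tracing with respect to the orthonormal frame $\{\bar e_k\}_{k=1}^m$ of $(M,g_i^M)$,
\begin{align*}
\Pi(\tilde f_i)(\psi_i(p))(d\tilde f_i,d\tilde f_i)=\sum_{k=1}^m\hes(\pi_N)(\tilde f_i(\psi_i(p)))\big(d\tilde f_i(\bar e_k),d\tilde f_i(\bar e_k)\big).
\end{align*}

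First I would discard the vertical sum. Since $(\psi_i)_*(e_t)=0$, inequality (\ref{cons}) gives $|di\circ f_i(e_t)|=|e_t\cdot f_i|\le C_1\epsilon_i'(\|\Delta f_i\|_{L^\infty}+\|f_i\|_{L^\infty})$; as $\|\Delta f_i\|_{L^\infty}$ and $\|f_i\|_{L^\infty}$ are uniformly bounded (proof of Proposition \ref{jadid}) and $\hes(\pi_N)$ is bounded on a neighbourhood of $N$, each vertical term is $O((\epsilon_i')^2)$ and the whole vertical sum tends to $0$.

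Next I would match the horizontal sums term by term, which rests on two facts. The base points at which the Hessian is read off are asymptotically equal: because $p$ and $s_{i,j}(\psi_i(p))$ lie in a common fibre of diameter $\le\epsilon_i$, inequality (\ref{cons}) together with $\sum_j\beta_j=1$ yields $|f_i(p)-\tilde f_i(\psi_i(p))|\le C\epsilon_i\to0$, so the Lipschitz continuity of $\hes(\pi_N)$ near $N$ (cf. Corollary \ref{ef}) forces the two Hessian tensors to differ by $O(\epsilon_i)$. The frame images are also asymptotically equal: the very chain of estimates in the proof of Lemma \ref{lemma1} gives $|di\circ f_i(e_k)(p)-d\tilde f_i(\bar e_k)(\psi_i(p))|\to0$ for each $k$.

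Finally I would combine these two facts through the bilinearity of $\hes(\pi_N)$ together with the uniform bounds $\|di\circ f_i\|_{L^\infty},\|d\tilde f_i\|_{L^\infty}\le C$ (the former from $\|e_{g_i}(f_i)\|_{L^\infty}<C$, the latter from the $C^1$-bound on $\tilde f_i$): writing each horizontal difference as a telescoping sum in the base point and in the two derivative arguments, every resulting term carries one factor tending to $0$ and two uniformly bounded factors, hence tends to $0$. Summing over the finitely many $k$ and letting $i\to\infty$ gives the claim. The main obstacle is precisely this last bookkeeping — ensuring all factors stay uniformly bounded while the small differences are isolated — but it becomes routine once the vertical terms have been dropped and Lemma \ref{lemma1} is invoked for the horizontal frame.
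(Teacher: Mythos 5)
Your proof is correct and follows essentially the same route as the paper's: the paper likewise deduces $\lim_{i\rightarrow\infty}|df_i(p)-d\tilde{f_i}(\psi_i(p))|=0$ from Lemma \ref{lemma1} and then bounds the difference of the two $\Pi$-terms by $C\cdot|df_i(p)-d\tilde{f_i}(\psi_i(p))|$, using the boundedness and Lipschitz continuity of $\hes(\pi_N)$ near $N$ together with the uniform $C^1$-bounds on $f_i$ and $\tilde{f_i}$. Your write-up simply makes explicit what the paper compresses into ``by the same argument as Lemma \ref{lemma1}'' --- the frame decomposition, the discarding of the vertical terms via (\ref{cons}), and the base-point term controlled by the Lipschitz property of $\hes(\pi_N)$ --- so it is a faithful, slightly more detailed version of the same argument.
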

\begin{proof}
By the  proof of the above lemma, we have
\begin{align*}
\lim_{i\rightarrow \infty}|df_i(p)-d\tilde{f_i}(\psi_i(p))|=0.
\end{align*}
 By the same argument as in Lemma \ref{lemma1} we can conclude
\begin{eqnarray*}
&\left|\Pi(f_i)(p)(dI\circ f_i,dI\circ f_i)-\Pi(\tilde{f_i})(\psi_i(p))(d\tilde{f_i},d\tilde{f_i})\right|\\
&\leq C\cdot\left|df_i(p)-d\tilde{f_i}(\psi_i(p))\right|.
\end{eqnarray*}
\end{proof}
 The map $\tilde{f_i}:(M, g_i^M, \dvol_{g_i^M})\rightarrow \mathbb R^q$ converges in $C^1$ to the map $I\circ f$, and $\Phi_i$ converges to $\Phi$ in the $C^{\infty}$ topology. Also $(M,g_i^M)$ converges to $(M,g)$ in $\mathcal{M}(n,D,v)$.  Therefore we have
\begin{eqnarray*}\label{limit}
\left|\int_{M}\Xi_{g_i^M}(\eta,\tilde{f_i})~\Phi_i\dvol_{g_i^M}-\int_M\Xi(\eta,f)~\Phi\dvol_{g}\right|\leq C\cdot\epsilon_i,
\end{eqnarray*}
where $\Xi(\cdot,\cdot)$ is defined by (\ref{khi}). By Lemma \ref{lemma1} and \ref{lemma2}, we have
\begin{align*}
\lim_{i\rightarrow\infty}\left|\int_{M_i}\Xi_{g_i}(\eta_i,f_i)\tfrac{\dvol_{M_i}}{\vol(M_i)}-\int_{M}\Xi_{g_i^M}(\eta,\tilde{f_i}){\psi_i}_*\left(\tfrac{\dvol_{M_i}}{\vol(M_i)}
\right)\right|=0.
\end{align*}
It follows that
\begin{eqnarray}\label{finish}
\lim_{i\rightarrow \infty}\int_{M_i}\Xi_{g_i}(\eta_i,f_i)\tfrac{\dvol_{M_i}}{\vol(M_i)}=\int_{M}\Xi_g(\eta,f)~\Phi\dvol_M.
\end{eqnarray}
Therefore  $f$ is weakly harmonic and since it is continuous, it is also a smooth harmonic map.
\end{proof}
Now we prove Case II without considering Assumption \ref{assumption}.
 \begin{proof}[Proof of Proposition \ref{CaseII}]
 By Remark \ref{explain} we can obtain a $C^1$-close metric $g_i(\epsilon)$  to $g_i$ which satisfies  (\ref{curv}) and
such that the map $\psi_i:(M_i,g_i(\epsilon))\rightarrow (M,{\psi_i}_*(g_i(\epsilon)))$ is a Riemannian submersion.

For small $\epsilon$, let $M(\epsilon)$ be the Gromov-Hausdorff limit of a subsequence  of $(M_i,g_i(\epsilon))$. By Lemma 2.3 in \cite{F88},   $(M_i,g_i(\epsilon))$ and $(M(\epsilon),g(\epsilon))$ converge  to $(M_i,g_i)$ and $(M,g)$  in ${\cal{M}}(n,D,v)$ respectively.

 The map $f_i:(M_i,g_i)\rightarrow (N,h)$ is harmonic and since $g_i(\epsilon)$ is $C^1$-close to $g$, we have
 \begin{align*}
 |\Xi_{g_i}(f_i,\eta_i)-\Xi_{g_i(\epsilon)}(f_i,\eta_i)|\leq C\cdot\epsilon .
 \end{align*}
By (\ref{finish}), we have
\begin{align*}
\lim_{i\rightarrow \infty}\left|\int_{M_i}\Xi_{g_i(\epsilon)}(f_i,\eta_i)\tfrac{\dvol_{(M_i,g_i(\epsilon))}}{\vol((M_i,g_i(\epsilon)))}
-\int_{M(\epsilon)}\Xi_{g(\epsilon)}(f,\eta)\cdot\Phi(\epsilon) \dvol_{M(\epsilon)}\right|=0,
\end{align*}
and finally since $g(\epsilon)$ converges to $g$ in the $C^{1,\alpha}$-topology, we have the desired result.
\end{proof}
\subsection{Case III: Collapsing to a singular space.}\label{caseiii}
Now we are going to investigate the general case when the sequence converges to a singular space.
 This means that $(M_i,g_i)$  in  ${\cal{M}}(n,D)$ converges to some metric space $(X,d)$. First we recall the following remark from \cite{Fuk87}.
\begin{remark}[Fukaya \cite{Fuk87}, \S7] \normalfont Let $Y$ be a Riemannian manifold on which $O(n)$ acts by isometry, and let $\theta:Y\rightarrow [0, \infty)$ be an $O(n)$-invariant smooth function. Put $X = Y/O(n)$. Let $p: Y\rightarrow X$ be the natural projection, $\bar{\theta}: X\rightarrow [0,\infty)$ the function induced from $\theta$, and $S(X)$ the set of all singular points of $X$. The set $S(X)\subset X$ has a well defined normal bundle on the codimension $2$ strata ($X=Y/O(n)$ is a Riemannian polyhedron and $S(X)$ is a subset of  the $(n-2)$-skeleton of $X$). Set
\begin{align*}
\lip(X,S(X))=\{u\in \lip(X)~|~ v\cdot u=0 ~ \text{if}~ v~\text{is perpendicular to}~ S(X)\}.
\end{align*}
 Define $Q_1: \lip(Y) \times \lip(Y)\rightarrow[0, \infty)$ and $Q_2: \lip( X, S(X))
\times \lip(X,S(X))\rightarrow [0, 1)$ by
\begin{eqnarray*}
Q_1(\tilde{k},\tilde{h} )&=& \int_Y \theta\cdot \langle\nabla\tilde{ k},\nabla \tilde{h}\rangle~ \dvol_Y,\\
Q_2(k,h)&=&\int_X  \bar{\theta}\cdot\langle\nabla k, \nabla h\rangle ~d\mu_g.
\end{eqnarray*}
It is easy to see that $f\circ p\in \lip(Y)$ for each $f$ contained in $\lip(X,S(X))$. Define
$p^*: \lip(X,S(X))\rightarrow \lip(Y)$ by $p^*(f)=f\circ p$. Let $\lip_{O(n)}(Y)$ be the set of all $O(n)$-invariant elements of $\lip(Y)$. Then, we can easily prove the following
\begin{lemme}\label{fuk7}
$ p^*$ is a bijection between $\lip(X, S(X))$ and $\lip_{O(n)}(Y)$. For elements  $f$  and $k$ of $\lip(X,S(X))$, we have
\begin{eqnarray}\label{Qone}
Q_1 (f, k)=Q_ 2 (p^* (f), p^* (k)),
\end{eqnarray}
and
\begin{eqnarray}\label{Qtwo}
\int_Y \theta\cdot p^*(f)p^*(k) ~\dvol_Y=\int_X \bar{\theta}\cdot fk ~d\mu_g.
\end{eqnarray}
\end{lemme}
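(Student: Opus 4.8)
The plan is to establish the three claims in order: first that $\pi^*$ is a bijection onto $\lip_{O(n)}(Y)$, then the energy identity (\ref{Qone}), and finally the measure identity (\ref{Qtwo}). For the bijection, I would first check that $\pi^*$ lands in $\lip_{O(n)}(Y)$. Given $f\in\lip(X,S(X))$, the composition $\pi^*(f)=f\circ\pi$ is $O(n)$-invariant because $\pi$ is constant on $O(n)$-orbits, and it is Lipschitz because $\pi:Y\to X=Y/O(n)$ is a distance-nonincreasing map (the quotient metric is defined so that $d_X(\pi(y),\pi(y'))\leq d_Y(y,y')$), so $[\pi^*(f)]_{\lip}\leq [f]_{\lip}$. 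Conversely, every $O(n)$-invariant Lipschitz function on $Y$ descends to a function on $X$, giving a two-sided inverse. The one subtlety is the constraint $v\cdot u=0$ for $v\perp S(X)$ in the definition of $\lip(X,S(X))$: I would argue that $O(n)$-invariance of a function on $Y$ forces its descent to have vanishing normal derivative along the singular strata, since the normal directions to $S(X)$ in $X$ correspond to the radial directions transverse to singular $O(n)$-orbits in $Y$, along which an $O(n)$-invariant function is forced to have a critical point at the orbit. This matching of the side condition with $O(n)$-invariance is the delicate point and the main obstacle.

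Next I would prove the gradient identity (\ref{Qone}). Over the principal (regular) part, the restricted projection $\pi|_U:U\to U/O(n)$ is a Riemannian submersion by Remark \ref{que}, so for an $O(n)$-invariant function the gradient is automatically horizontal and $\pi$ is an isometry on horizontal vectors; hence $\langle\nabla\pi^*(f),\nabla\pi^*(k)\rangle(y)=\langle\nabla f,\nabla k\rangle(\pi(y))$ pointwise on the preimage of the regular set. To pass from this pointwise equality to the equality of integrals $Q_1(\pi^*f,\pi^*k)=Q_2(f,k)$, I would use the coarea/fiber-integration formula for the Riemannian submersion together with the fact, recorded just before this remark, that $\nu(S(X))=0$ and that $\mu_g$ restricted to $U/O(n)$ equals $\dvol_{X-S(X)}$. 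Because $\theta$ is $O(n)$-invariant it descends to $\bar\theta$, and integrating the fiber volume of $\theta$ over each orbit reproduces exactly the weight against $d\mu_g$ downstairs; the singular set contributes nothing since it is $\mu_g$-null.

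Finally, identity (\ref{Qtwo}) follows by the same fiber-integration argument applied to the product $\theta\cdot\pi^*(f)\pi^*(k)$ in place of $\theta\cdot\langle\nabla\pi^*f,\nabla\pi^*k\rangle$; since $\pi^*(f)\pi^*(k)=(fk)\circ\pi$ is $O(n)$-invariant, integrating out the $O(n)$-fibers converts $\int_Y$ into $\int_X$ against $d\mu_g$, again using that $S(X)$ is null. I expect (\ref{Qtwo}) to be essentially immediate once the submersion picture and the null singular set are in place, so the real work is concentrated in verifying that $\pi^*$ respects the $S(X)$-side condition and in justifying the fiber-integration over the singular quotient.
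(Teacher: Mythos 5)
The paper never actually proves Lemma \ref{fuk7}: it is transcribed from Fukaya [Fuk87a, \S 7] inside a remark, prefaced only by ``we can easily prove the following,'' so your proposal must be judged on its own terms. Your treatment of the two integral identities is the intended mechanism and is essentially correct: on the principal part the projection $\pi$ is a Riemannian submersion (Remark \ref{que}), an invariant Lipschitz function is differentiable a.e.\ with horizontal gradient, horizontal vectors are carried isometrically, and the coarea/fiber-integration formula plus $\nu(S(X))=0$ converts $\int_Y$ into $\int_X$. Two points need tightening, though. First, for (\ref{Qone}) and (\ref{Qtwo}) to hold, $\bar\theta$ must be the \emph{fiber integral} $\bar\theta(x)=\int_{\pi^{-1}(x)}\theta\,\dvol_{\pi^{-1}(x)}$, not the pointwise descent of the invariant function $\theta$; your wording conflates the two (with the pointwise descent the identities fail unless every orbit has volume one --- compare the definition of $\Phi_X$ in the paper). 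Second, (\ref{Qone}) as printed has the arguments of $Q_1$ and $Q_2$ interchanged relative to their domains of definition; you silently and correctly fix this.

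The genuine gap is in the surjectivity half of the bijection --- exactly the step you flag as delicate. Your argument (invariance forces ``a critical point at the orbit,'' hence vanishing normal derivative of the descent) is valid only for functions that are \emph{differentiable} at the singular orbits: there the differential is fixed by the isotropy (slice) representation and therefore has no component in the directions perpendicular to the stratum. A merely Lipschitz invariant function need not be differentiable at any point over $S(X)$ (Rademacher gives differentiability only a.e., and $S(X)$ is null), and for such functions the claim genuinely fails: take $\tilde u = d_Y(\cdot\,,\mathcal{O})$, the distance to a singular orbit $\mathcal{O}$, which is $O(n)$-invariant and $1$-Lipschitz; its descent is $d_X(\cdot\,,\pi(\mathcal{O}))$, whose one-sided derivative at points of $\pi(\mathcal{O})\subset S(X)$ equals $1$ in every direction perpendicular to the stratum, so under the literal reading of the defining condition it does not lie in $\lip(X,S(X))$. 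Thus your proof of surjectivity breaks, and any honest proof must first settle how ``$v\cdot u=0$ for $v$ perpendicular to $S(X)$'' is to be interpreted for non-differentiable Lipschitz functions (e.g.\ imposed only where the relevant directional derivatives exist, or with the function classes adjusted); this imprecision sits in the statement itself, which is presumably why neither this paper nor Fukaya records an argument. Note finally that the identities (\ref{Qone}) and (\ref{Qtwo}) do not require the bijection at all --- they hold for any $f,k\in\lip(X,S(X))$ by your fiber-integration argument --- so the gap is confined to the bijection claim.
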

\end{remark}
Now we prove the main theorem of this paper.
\begin{proof}[Proof of Theorem \ref{main}]
We denote by  $(Y,g,\Phi_Y\dvol_Y)$ the limit space of the frame bundles  over $M_i$, and by $(X,d,\nu)$ the limit space of $M_i$   with respect to the measured Gromov-Hausdorff topology. We know $(X,\nu)=(Y,\Phi_Y\dvol_Y)/O(n)$ (see Subection \ref{densityfunction} ).  The projection   $p_i:(F(M_i),\tilde{g}_i)\rightarrow (M_i,g_i)$ is a Riemannian submersion with totally geodesic fibers. So using the reduction formula the map $\bar {f}_i=f_i\circ p_i$ is harmonic on $F(M_i)$ and it is invariant under the action of $O(n)$. Furthermore $\|e_{\tilde{g}_i}(\bar{f}_i)\|_{\infty}$ is bounded ($p_i$ is a Riemannian submersion). Using Case II, $\bar{f}_i$ converge to some map $\bar{f}$ on $(Y,g,\Phi_Y\dvol_Y)$. The map $\bar{f}$  satisfies
\begin{eqnarray*}\label{ahmagh}
\int_{Y}\Xi_{g}(\bar{f},\eta)~\Phi_Y\dvol_Y=0,
\end{eqnarray*}
 where $\eta$ is a test function. The map $\bar{f}$ is also $O(n)$ invariant and continuous. Consider a quotient map $f$ such that $\bar{f}= p^*(f)$. First we show that $f$ is in ${\cal{H}}^1((X,\nu),N)$. By the argument in Case II, $\bar{f}$ is in ${\cal{H}}^1((Y,\Phi_Y \dvol_Y),N)$ and so by  equation (\ref{Qone}), $f$ has finite energy. 
  Now we show that $f$ is weakly harmonic on $(X,\nu)$. By equation (\ref{Qone}), for $\eta$ in $\lip(X,S(X))$
\begin{align*}
\int_Y \langle \nabla I\circ\bar{f},\nabla p^*(\eta)\rangle~ \Phi_Y\dvol_Y=\int_X \langle \nabla I\circ {f},\nabla \eta\rangle~\Phi_X d\mu_g.
\end{align*}
Furthermore
\begin{eqnarray*}
\int_Y \langle \Pi(\bar{f})(\nabla^{g}(I\circ\bar{f}),\nabla ^{g}(I\circ\bar{f})),p^*(\eta)\rangle~ \Phi_Y\dvol_Y\\
=\int_X \langle \Pi({f})(\nabla(I\circ{f}),\nabla(I\circ{f})),\eta\rangle~ \Phi_X d\mu_g,
\end{eqnarray*}
and since $\Phi_Y=p^* (\Phi_X)$
\begin{align*}
\int_{Y}\Xi_{g}(\bar{f},p^*(\eta))~\Phi_Y\dvol_Y=
\int_{X}\Xi({f},\eta)~\Phi_X d\mu_g,
\end{align*}
which shows that  $f:X\rightarrow N$ is a weakly harmonic map.
\end{proof}
\section{Appendix: Convergence of tension field.}
 In this section we study convergence of the tension fields of the maps $f_i$, $\tau(f_i)$, under the assumptions of Proposition \ref{jadid}.\\\\
  Assume $(M_i,g_i)$, $f_i$, $N$ to be as in Proposition \ref{jadid}.  Moreover consider the following assumption
   \begin{asum}
 The section $s_{i,j}$ is almost harmonic,
\begin{eqnarray}\label{alhar}
|\tau(s_{i,j})|\leq C\cdot\epsilon''_i,
\end{eqnarray}
and also
\begin{eqnarray}\label{geo}
|\nabla _{\bar{X}}ds_{i,j}(X)|\leq C\cdot\epsilon''_i,
\end{eqnarray}
where $X$ is a smooth vector field on $M$ and $\bar{X}$ is its horizontal lift and $\epsilon''_i$ is a sequence which converges to zero.
 \end{asum}

Using Assumption \ref{assumption} and by Theorem \ref{reduction} we have
   \begin{eqnarray}\label{decom}
\tau(f_i)&=&({\nabla_{e_k}}df_i)e_k+({\nabla_{e_t}}df_i)e_t\\
&=&({\nabla_{e_k}}df_i)e_k+\nabla_{{f_i}_*(e_t)}{{f_i}_*(e_t)}{\nonumber}\\
&-&{f_i}_*(\nabla_{e_t}{e_t})^{H}- {f_i}_*(\nabla_{e_t}{e_t})^V{\nonumber}\\
&=&({\nabla_{e_k}}df_i)e_k-{f_i}_*(\h_i)+\tau ({f_i}^{\bot}){\nonumber}
\end{eqnarray}
  where $\{e_k,e_t\}$ and $\bar{e}_k$ are as in the proof of Proposition \ref{jadid}, ${f_i}^{\bot}$ denotes the restriction of $f_i$ to the fibers $F_i$, and $\h_i$ is the mean curvature vector of the submanifold $F_i$.\\\\
We investigate how each term of the equation above behaves as $f_i$ converges to $f$.
 \begin{lemme}
We have
\begin{eqnarray}\label{first}
\lim_{i\rightarrow \infty}\left|d I({\nabla_{e_k}}d f_i)e_k(p)-
\left(\Delta_{g_i^M} \tilde{f_i}-\Pi(\tilde{f_i})(d\tilde{f_i},d\tilde{f_i})\right)(\psi_i(p))\right|=0.
\end{eqnarray}
\end{lemme}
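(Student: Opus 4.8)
The plan is to isolate, inside $di\big(({\nabla_{e_k}}df_i)e_k\big)$, an ambient ``horizontal Laplacian'' and a second--fundamental--form term, and then to match each separately against the two terms on the right--hand side. Write $F_i:=i\circ f_i:M_i\to\mathbb R^q$ and let $\nabla^2 F_i(X,Y)=X\big(dF_i(Y)\big)-dF_i(\nabla^{g_i}_X Y)$ denote its Euclidean--valued Hessian. The composition formula \eqref{scndfnd} gives, for the horizontal frame $\{e_k\}$,
\[
di\big(({\nabla_{e_k}}df_i)e_k\big)=\sum_k\nabla^2 F_i(e_k,e_k)-\sum_k\hes \pi_N(F_i)\big(dF_i(e_k),dF_i(e_k)\big).
\]
Since by definition $\Delta^{g_i^M}\tilde f_i=\sum_k\nabla^2\tilde f_i(\bar e_k,\bar e_k)$ (the Hessian on $(M,g_i^M)$) and $\Pi(\tilde f_i)(d\tilde f_i,d\tilde f_i)=\sum_k\hes \pi_N(\tilde f_i)\big(d\tilde f_i(\bar e_k),d\tilde f_i(\bar e_k)\big)$, it suffices to prove that
\[
\sum_k\hes \pi_N(F_i)\big(dF_i(e_k),dF_i(e_k)\big)(p)\longrightarrow \Pi(\tilde f_i)(d\tilde f_i,d\tilde f_i)(\psi_i(p))
\]
and
\[
\sum_k\nabla^2 F_i(e_k,e_k)(p)\longrightarrow \Delta^{g_i^M}\tilde f_i(\psi_i(p)).
\]

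The first limit is obtained exactly as in Lemma \ref{lemma2}. By Lemma \ref{lemma1} the horizontal first derivatives satisfy $dF_i(e_k)(p)-d\tilde f_i(\bar e_k)(\psi_i(p))\to 0$, and \eqref{cons} shows that the vertical derivatives of $F_i$ are $O(\epsilon_i)$, so the horizontal trace of $\hes \pi_N(F_i)$ agrees with its full trace up to $o(1)$. Since $\hes \pi_N$ is smooth in a neighbourhood of $N$ and $\tilde f_i\to i\circ f$ uniformly with image tending to $N$, the quadratic expression passes to the limit.

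The second limit is the heart of the matter. First I dispose of the connection term: because $\psi_i:(M_i,g_i)\to(M,g_i^M)$ is a Riemannian submersion whose $A$--tensor satisfies $A_{e_k}e_k=0$, the vector $\nabla^{g_i}_{e_k}e_k$ is the horizontal lift of $\nabla^{g_i^M}_{\bar e_k}\bar e_k$; hence $dF_i(\nabla^{g_i}_{e_k}e_k)(p)$ matches $d\tilde f_i(\nabla^{g_i^M}_{\bar e_k}\bar e_k)(\psi_i(p))$ by the first--derivative comparison already used. It remains to compare the genuine second derivatives $\sum_k e_k\big(dF_i(e_k)\big)(p)$ with $\sum_k\bar e_k\big(d\tilde f_i(\bar e_k)\big)(\psi_i(p))$. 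Expanding $\tilde f_i=\sum_j\beta_j\,(F_i\circ s_{i,j})$ by the chain rule produces, for each $j$, a Hessian term $\nabla^2 F_i\big((s_{i,j})_*\bar e_k,(s_{i,j})_*\bar e_k\big)$, a term $dF_i\big(\nabla ds_{i,j}(\bar e_k,\bar e_k)\big)$ controlled by $C\epsilon_i$ through the almost--geodesic hypothesis \eqref{geo}, and terms carrying $d\beta_j$ which cancel in the limit because $\sum_j d\beta_j=0$ and all $F_i\circ s_{i,j}$ share the common limit $i\circ f$, just as in the last displays of Lemma \ref{lemma1}. Finally, since $p$ and $s_{i,j}(\psi_i(p))$ lie in one fibre at distance $\le\epsilon_i$, the uniform $C^{2,\alpha}$ bound \eqref{sch1} lets me transfer the evaluation of $\nabla^2 F_i$ from $s_{i,j}(\psi_i(p))$ to $p$ at the cost of $O(\epsilon_i^{\alpha})$.

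I expect the main obstacle to be precisely this second--derivative comparison, and within it the replacement of the pushed--forward frame $(s_{i,j})_*\bar e_k$ by the horizontal lift $e_k$. Writing $(s_{i,j})_*\bar e_k=e_k+w_k$ with $w_k$ vertical, the difference $\nabla^2 F_i\big((s_{i,j})_*\bar e_k,(s_{i,j})_*\bar e_k\big)-\nabla^2 F_i(e_k,e_k)$ contains mixed and purely vertical Hessian contributions $2\nabla^2 F_i(e_k,w_k)+\nabla^2 F_i(w_k,w_k)$, for which the estimates \eqref{cons} and \eqref{cons1} furnish only $O(1)$ bounds rather than smallness. Controlling these terms is exactly what the extra section hypotheses \eqref{alhar}--\eqref{geo} are designed for, together with a second--order refinement of the contradiction argument of Lemma \ref{main1} giving smallness of the mixed vertical--horizontal second derivatives of the harmonic map $f_i$; the vanishing of $A_{e_k}e_k$ and the bounded second fundamental form of the fibres supplied by Assumption \ref{assumption} are what prevent the collapsing geometry from injecting unbounded curvature terms into these estimates.
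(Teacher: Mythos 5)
Your overall route is the same as the paper's: split $di\big(({\nabla_{e_k}}df_i)e_k\big)$ by the composition formula into the ambient Hessian of $F_i=i\circ f_i$ minus the $\hes\pi_N$ term, match the latter by the argument of Lemma \ref{lemma2}, and match the former by expanding $\tilde f_i=\sum_j\beta_j\,(F_i\circ s_{i,j})$, cancelling the $d\beta_j$ and $\Delta\beta_j$ terms via $\sum_j\beta_j=1$ and the common $C^1$-limit, and absorbing the section terms through \eqref{alhar} and \eqref{geo}. Up to that point your proposal and the paper's proof coincide step by step (your treatment of the connection term via $A_{e_k}e_k=0$ is a cleaner way of saying what the paper leaves implicit).

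The gap is in the last step, and you flagged it yourself without closing it. You write $s_{i,j*}\bar e_k=e_k+w_k$ with $w_k$ vertical and say that the mixed terms $\nabla^2F_i(e_k,w_k)$ and $\nabla^2F_i(w_k,w_k)$ admit only $O(1)$ bounds, so you appeal to an unproven ``second-order refinement of the contradiction argument of Lemma \ref{main1}'' giving smallness of mixed vertical--horizontal second derivatives of $f_i$. No such refinement appears in the paper, and none is used there: the paper's proof instead puts the smallness into the vector $w_k$ itself, asserting $|e_k-s_{i,j*}(\bar e_k)|\le\epsilon_i$ (the vertical discrepancy collapses with the fibres), after which the plain $O(1)$ Hessian bound \eqref{cons1} (equivalently \eqref{sch1}) already yields $|(\nabla_{e_k}df_i)(e_k-s_{i,j*}\bar e_k)|\le C\epsilon_i\to 0$, and likewise for the other mixed term. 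So the missing idea is not a sharper second-derivative estimate for $f_i$ but a first-order estimate on the section: the smallness lives in $w_k$. To be fair, your caution has a source --- in Lemma \ref{lemma1} the very same quantity is only bounded by a constant via \eqref{sec}, and the paper never reconciles that with the $\epsilon_i$-bound claimed in the appendix --- but as the proof stands, the $\epsilon_i$-smallness of $w_k$ is the ingredient that closes the argument, and your proposal supplies no substitute for it.
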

\begin{proof}
By the discussion in the proof of Proposition \ref{jadid}, we know that $\tilde{f_i}$ converges to $f$ in the $C^1$-topology. Using the composition formula we have
\begin{align*}
 dI(Bf_i(X_1,X_2))=B(I\circ f_i)(X_1,X_2)-B(\pi_N)(d (I\circ f_i)(X_1),d (I\circ f_i)(X_2)),
\end{align*}
 and so for $k=1,\ldots,n$,
\begin{align*}
 dI(({\nabla_{e_k}}df_i)e_k)=({\nabla_{e_k}}d( I\circ f_i))e_k-B(\pi_N)(d(I\circ f_i)(e_k),d(I\circ f_i)(e_k)).
\end{align*}
 First we show that
 \begin{align*}
\lim_{i\rightarrow \infty}|{\nabla_{e_k}}d(I\circ f_i)e_k(p)-
\Delta_{g_i^M} \tilde{f_i}(\psi_i(p))|=0.
\end{align*}
   By  definition of $\tilde{f_i}$,
 \begin{eqnarray*}\label{jalal}
 (\nabla_{{\bar{e}_k}} d\tilde{f_i}){\bar{e}_k}&=&\sum \big (d\beta_j{({\bar{e}_k})}\cdot d f_i({s_{i,j}}_*({\bar{e}_k}))\\
 &+&\beta_j\cdot(\nabla_{{\bar{e}_k}}d(f_i\circ s_{i,j})){\bar{e}_k}+\triangle \beta_j\cdot  f_i\circ s_{i,j}\big).
 \end{eqnarray*}
and again by the composition formula 
\begin{eqnarray}\label{compp}
\tau(f_i\circ s_{i,j})=B_{{s_{i,j}}_*(\bar{e}_k),{s_{i,j}}_*(\bar{e}_k)}{f_i}+df_i(\tau(s_{i,j})).
\end{eqnarray}
Since $f_i\circ s_{i,j}$ converges in $C^1$ to $f$
 \begin{eqnarray*}
& \lim_{i\rightarrow \infty}|\sum d\beta_j{({\bar{e}_k})}\cdot d f_i({s_{i,j}}_*({\bar{e}_k}))|=0,\\
&\lim_{i\rightarrow \infty}\sum\Delta \beta_j\cdot  f_i\circ s_{i,j}(x)= \sum\triangle \beta_j\cdot f(x)=0.
 \end{eqnarray*}
Also, ${\psi_i}_*(e_k-{s_{i,j}}_*(\bar{e}_k))=0$ and so $e_k-{s_{i,j}}_*(\bar{e}_k)$ is vertical. On the other hand
\begin{align*}
|e_k-{s_{i,j}}_*(\bar{e}_k)|\leq\epsilon_i.
\end{align*}
By inequality (\ref{cons}) and almost harmonicity of $s_{i,j}$ (\ref{alhar}), the second term on the right hand side of (\ref{compp}) converges to zero. Again by inequality (\ref{cons1}) and  (\ref{geo}), we have
\begin{eqnarray*}
&\lim_{i\rightarrow \infty}|(\nabla_{e_k}{d f_i})(e_k-{s_{i,j}}_*(\bar{e}_k))|=0,\\
&\lim_{i\rightarrow \infty}|(\nabla_ {(e_k-{s_{i,j}}_*(\bar{e}_k))}d{f_i})e_k|=0.
\end{eqnarray*}
Finally
\begin{align*}
\lim_{i\rightarrow\infty}|(\nabla_{e_k} d(I\circ f_i))e_k(p)-(\nabla_{\bar{e}_k}d\tilde{f_i})\bar{e}_k(\psi_(p))|=0.
\end{align*}
We have the same for the second term
\begin{align*}
\lim_{i\rightarrow\infty}|\Pi(f_i)(p)(df_i,df_i)-\Pi(\tilde{f_i})(\psi_i(p))(d\tilde{f_i},d\tilde{f_i})|=0.
\end{align*}
\end{proof}
By the above lemma and  ${\psi_i}_*(\tfrac{\dvol_{M_i}}{\vol(M_i)})=\Phi_i\dvol_{M}$, we have
\begin{align*}
\lim_{i\rightarrow\infty}\left|\int_{M_i}\langle dI(({\nabla_{e_k}}d f_i)e_k),\eta_i\rangle~\tfrac{\dvol_{M_i}}{\vol(M_i)}-
\int_M \langle\Delta^{g_i^M} \tilde{f_i}-\Pi(\tilde{f_i})(d\tilde{f_i},d\tilde{f_i}),\eta\rangle~\Phi_i\dvol_{g_i^M}\right|=0,
\end{align*}
and we conclude
\begin{eqnarray}\label{firstt}
&\lim_{i\rightarrow\infty}\int_{M_i}\langle dI(({\nabla_{e_k}}d f_i)e_k),\eta_i\rangle~\tfrac{\dvol_{M_i}}{\vol(M_i)}\nonumber\\
&=\int_{M}\left[\langle df,d\eta\rangle+\langle df(\nabla\ln \Phi)-\Pi(f)(df,df),\eta\rangle\right]~ \Phi\dvol_M.
\end{eqnarray}
Here $\eta$ is a test map on $M$ and $\eta_i=\eta\circ\psi_i$.
Now we will consider the second and third terms in the decomposition of $\tau(f_i)$.
\begin{lemme} \label{hamechi}With the same assumptions as above 
\begin{enumerate}[i.]
\item $\lim_{i\rightarrow \infty} \int_{M_i}\langle df_i(\h_i),\eta_i\rangle~ \tfrac{\dvol_{M_i}}{\vol(M_i)}=-\int_{M}\langle df(\nabla \ln\Phi),\eta\rangle~ \Phi \dvol_M.$
\item $\lim _{i\rightarrow \infty}\|\tau({f_i}^{\bot})\|=0.$
\end{enumerate}
Here $\h_i$ denotes the mean curvature vector of the fibers $F_i^x=\psi^{-1}_i(x)$. 
\end{lemme}

{Before we prove  Lemma \ref{hamechi}, we prove the following lemma which we need for the proof of part i.}
\begin{lemme}\label{two}
We have
\begin{align}
\int_M\eta d\ln\Phi(X)~\Phi\dvol_M=-\lim_{i\rightarrow \infty}\int_{M_i} \eta\langle X,\h_i\rangle ~ \tfrac{\dvol_{M_i}}{\vol(M_i)}.
\end{align}
\end{lemme}
\begin{proof} Suppose $X$ is a smooth vector field on $M$ and $X_i$ its horizontal lift on $M_i$. The flow $\theta_t^i $ of $X_i$ sends fibers to fibers diffeomorphically. By the first variation formula
\begin{eqnarray}\label{mean}
 \left. \frac{d}{dt} \right|_{t = 0} {\theta_t^i}^*(\dvol_{F^x_i})=-\int_{F^x_i}\langle X_i,\h^x_i \rangle ~ \dvol_{F^x_i}.
\end{eqnarray}
Also
\begin{align*}
\Phi_i(x)=\tfrac{\vol(\psi_i^{-1}(x))}{\vol(M_i)}.
\end{align*}
and by (\ref{mean}),
\begin{align*}
d\Phi_i(X)(x)=-\int_{F^x_i}\langle X_i,\h^x_i \rangle ~ \tfrac{\dvol_{F^x_i}}{\vol(M_i)},
\end{align*}
For an arbitrary $\eta$ in $C^{\infty}({M})$, we prove
\begin{eqnarray}\label{ahmah}
\int_{M} \eta d\Phi_i(X)~ \dvol_{g_i^M}=-\int_{M_i}\eta_i\langle X_i,\h_i \rangle ~ \tfrac{\dvol_{M_i}}{\vol(M_i)}.
\end{eqnarray}
If we consider $(U_{\gamma},h_{\gamma})$ as a local trivialization of the  fibration $\psi_i$, then
\begin{align*}
\int_{M} \chi_{U_{\gamma}} d\Phi_i(X)~ \dvol^{g_i^M}=-\int_{U_{\gamma}}\int_{F^x_i}\chi_{U_{\gamma}}\langle X_i,\h^x_i\rangle ~ \tfrac{\dvol_{F^x_i}}{\vol(M_i)}\dvol_{g_i^M},
\end{align*}
and so
\begin{align*}
\int_{M} \chi_{U_{\gamma}} d\Phi_i(X)~ \dvol^{g_i^M}_{M}=-\int_{\psi_i^{-1}( U_{\gamma})}\langle X_i,\h_i \rangle ~ \tfrac{\dvol_{M_i}}{\vol(M_i)},
\end{align*}
where $\chi_{U_{\gamma}}$ denotes the characteristic function on $U_{\gamma}$ and so we have (\ref{ahmah}).
The functions $\Phi_i$ goes to $\Phi$ in $C^{\infty}$ and also $\dvol^{g_i^M}$ goes to $\dvol_M$ as $i$ goes to infinity. Letting $i$ go to $\infty$ on the both sides of  (\ref{ahmah}) and by the definition of weak derivatives
\begin{align*}
\int_M\eta d\ln\Phi(X)~\Phi\dvol_M=-\lim_{i\rightarrow \infty}\int_{M_i} \eta\langle X,\h_i\rangle ~ \tfrac{\dvol_{M_i}}{\vol(M_i)}.
\end{align*}
\end{proof}
\begin{proof}[Proof of Lamma \ref{hamechi}]
Part i follows directly from Lemma \ref{two}.

To prove part  ii consider
\begin{eqnarray*}
\tau({f_i}^{\bot})=\nabla_{{f_i}_*(e_t)}{{f_i}_*(e_t)}-{f_i}_*(\nabla_{e_t}{e_t})^{V}.
\end{eqnarray*}
From (\ref{cons}) and (\ref{cons1})
\begin{eqnarray*}
|\nabla_{{f_i}_*(e_t)}{{f_i}_*(e_t)}|&<&C\cdot{\epsilon'_i},\\
\|{f_i}_*(\nabla_{e_t}{e_t})^{V}\|_{L^{\infty}}&<&C\cdot{\epsilon'_i}|(\nabla_{e_t}{e_t})^V|,
\end{eqnarray*}
where $C$ is a constant independent of $i.$ It follows that
\begin{align*}
\lim _{i\rightarrow \infty}\|\tau({f_i}^{\bot})\|=0.
\end{align*}

\end{proof}


\def\cprime{$'$} \def\cprime{$'$} \def\cprime{$'$} \def\cprime{$'$}

\end{document}